\title{The radial MASA in free orthogonal quantum groups}
\author{Amaury Freslon}
\address{Amaury Freslon, Laboratoire de Math\'ematiques d'Orsay, Univ. Paris-Sud, CNRS, Universit\'e Paris-Saclay, 91405 Orsay, France}
\email{amaury.freslon@math.u-psud.fr}
\author{Roland Vergnioux}
\address{Roland Vergnioux, Normandie Univ, UNICAEN, CNRS, Laboratoire de Mathématiques Nicolas Oresme, 14000 Caen, France}
\email{roland.vergnioux@unicaen.fr}
\keywords{Quantum groups, von Neumann algebras, MASAs}
\subjclass[2010]{46L65, 
20G42, 
46L10} 
\date{}
\theoremstyle{plain}
\newtheorem{thm}{Theorem}[section]
\newtheorem{prop}[thm]{Proposition}
\newtheorem{cor}[thm]{Corollary}
\newtheorem{lem}[thm]{Lemma}
\theoremstyle{definition}
\newtheorem{de}[thm]{Definition}
\DeclareMathOperator{\id}{id}
\DeclareMathOperator{\Ir}{Irr}
\DeclareMathOperator{\Mor}{Hom}
\DeclareMathOperator{\Pol}{Pol}
\DeclareMathOperator{\Span}{span}
\DeclareMathOperator{\Tr}{Tr}
\DeclareMathOperator{\tr}{tr}
\newcommand{\B}{\mathcal{B}}
\newcommand{\Ll}{\mathcal{L}}
\newcommand{\C}{\mathbb{C}}
\newcommand{\E}{\mathbb{E}}
\newcommand{\F}{\mathbb{F}}
\newcommand{\G}{\mathbb{G}}
\newcommand{\N}{\mathbb{N}}
\newcommand{\Z}{\mathbb{Z}}
\newcommand{\HS}{{\rm HS}}
\newcommand{\red}{{\rm red}}
\newcommand{\color}[2][]{}
\begin{document}

\begin{abstract}
  We prove that the radial subalgebra in free orthogonal quantum group factors
  is maximal abelian and mixing, and we compute the associated bimodule. The
  proof relies on new properties of the Jones-Wenzl projections and on an
  estimate of certain scalar products of coefficients of irreducible
  representations.
\end{abstract}

\maketitle

\section{Introduction}

Discrete groups have been an important part of the theory of von Neumann
algebras since its very beginning. Taking advantage of their algebraic or
geometric properties, one can build interesting families of examples and
counter-examples of von Neumann algebras, and get some insight into crucial
structural properties like property (T) or approximation properties. In the last
ten years, there has been an increasing number of results showing that discrete
quantum groups can also produce interesting examples of von Neumann algebras. In
this work, we continue this program by initiating the study of abelian
subalgebras in von Neumann algebras of discrete quantum groups.

The importance of abelian subalgebras in the study of von Neumann algebras has
been long known and, as already mentioned, group von Neumann algebras have
played an important role in that history. For instance, the subalgebra generated
by one of the generating copies of $\Z$ inside the von Neumann algebra of the
free group $\F_{2}$ was proved by J.~Dixmier to be maximal abelian
\cite{dixmier1954abeliens}, and by S.~Popa to be maximal injective
\cite{popa1983maximal}, thus answering a long-standing question of
R.V.~Kadison. The fact that the subalgebra comes from a group inclusion was
crucial there.

Another example of abelian subalgebra in free group factors is the so-called
\emph{radial} (or \emph{laplacian}) subalgebra, which is the one generated by
the sum of the generators and their inverses. This subalgebra does not come from
a subgroup, hence the aforementioned techniques do not apply. S.~Radulescu
introduced in \cite{radulescu1991singularity} tools to prove that this
subalgebra, which was already known to be maximal abelian by work of S.~Pytlik
\cite{pytlik1981radial}, is singular. His techniques were later used again to
prove that the radial subalgebra is maximal amenable
\cite{cameron2010radial}. For more background on maximal abelian subalgebras we
refer to the book \cite{sinclair2008finite}.

\bigskip

In this paper, we study the analogue of the radial subalgebra in free quantum
group factors. More precisely, we consider the free orthogonal quantum group of
Kac type $O_{N}^{+}$ and, inside its von Neumann algebra $L^\infty(O_{N}^{+})$,
the subalgebra generated by the characters of irreducible
representations. Recall that $O_{N}^{+}$ is a compact quantum group introduced
in \cite{wang1995free}, whose discrete dual is a quantum analogue of a free
group. In particular $L^\infty(O_{N}^{+})$ plays the role of a free group factor
$\Ll(\F_{N})$. This analogy, dating back to the seminal works of T.~Banica
\cite{banica1996theorie}, \cite{banica1997groupe}, has been supported since then
by further work of several authors who proved that the von Neumann algebra
$L^{\infty}(O_{N}^{+})$, for $N\geqslant 3$, indeed shares many properties with
free group factors:
\begin{itemize}
\item it is a full factor with the Akemann-Ostrand property
  \cite{vaes2007boundary},
\item it has the Haagerup property \cite{brannan2011approximation} and the
  completely contractive approximation property \cite{freslon2012examples},
\item it is strongly solid \cite{isono2012examples} and has property strong $HH$
  \cite{fima2014cocycle},
\item it satisfies the Connes embedding conjecture \cite{brannan2014connes}.
\end{itemize}

\bigskip

As far as the radial subalgebra is concerned, the techniques of S.~Radulescu do
not apply in the quantum case, because there is no clear way to mimic the
construction of the so-called \emph{Radulescu basis}. However, the properties of
the radial algebra mentioned previously can all be proved using another tool
which we briefly explain. Consider, for $l\in \N$, the element $w_{l}\in
\Ll(\F_{N})$ which is the sum of all words of length $l$. Then, if $x, x'$ are
two words of length $k$ and $y, y'$ are two other words of length $n$, we have
\begin{equation} \label{eq:classicalestimate} \langle (x-x')w_{l},
  w_{l'}(y-y')\rangle \leqslant 2\min(k+1,n+1).
\end{equation}
This estimate can be proved by elementary counting arguments, similar to the
ones in \cite[Sec 4]{sinclair2003laplacian}. It can then be used to prove
maximal abelianness and singularity in one shot. We will use the same strategy
here.

Elements of the form $x-x'$ with $x$ and $x'$ of the same length $k$ form a
basis of the orthogonal of the radial subalgebra in $\Ll(\F_{N})$, in the
quantum case their role will be played by the coefficients $u^{k}_{\xi\eta}$ of
an irreducible representation $u^{k}$ with respect to vectors $\xi$, $\eta$ such
that $\xi$ is orthogonal to $\eta$. The role of $w_{l}$ will be played by the
character $\chi_{l}$ of the irreducible representation $u^{l}$ --- note however
that $\|w_{l}\|^2 = 2N(2N-1)^{l-1}$ in $L^2(\F_{N})$, whereas $\|\chi_{l}\|^{2}
= 1$ in $L^{2}(O_{N}^{+})$. The estimate analogous to
\eqref{eq:classicalestimate} that we will prove and use in the present article
is then stated as follows (see Theorem~\ref{thm:keyestimate}):
\begin{equation*}
  \langle \chi_{l}u_{\xi', \eta'}^{k}, u^{n}_{\xi, \eta}\chi_{l'} \rangle \leqslant Kq^{\max(l, l')},
\end{equation*}
with $q \in ]0,1[$. From this we will deduce all the results announced in the
abstract.

\bigskip

Let us now outline the content of the paper. In Section~\ref{sec:preliminaries},
we recall some facts on compact quantum groups and in particular on free
orthogonal quantum groups. Since the geometry of their representation theory
will be crucial in the computations, we have to make some conventional choices
and give the corresponding explicit formul\ae\ for several related objects.

Section~\ref{sec:Wenzl} and~\ref{sec:estimate} form the core of the paper. There
we prove the announced estimate for scalar products of coefficients and
characters. The proof, presented in Section~\ref{sec:estimate}, is quite
technical and relies on properties of the so-called Jones-Wenzl projections
which are of independent interest and are established in
Section~\ref{sec:Wenzl}.

Eventually, we prove in Section~\ref{sec:radial} all our structural results on
the radial subalgebra, namely that it is maximal abelian, mixing and has
spectral measure equivalent to the Lebesgue measure. The proofs here are very
simple using the main estimate and the arguments are certainly well-known to
experts in von Neumann algebras. Since however people interested in discrete
quantum groups may not be so familiar with them, we give full proofs. The paper
ends with some remarks on the results of this work.

\subsection*{Acknowledgments}

We would like to thank \'Eric Ricard and Cyril Houdayer for interesting
conversations. The first author was partially supported by the ERC Advanced
Grant 339760 "Noncommutative distributions in free probability". 

\section{Preliminaries}\label{sec:preliminaries}

In this section we give the basic definitions and results needed in the
paper. All scalar products will be \emph{left-linear} and we will denote by
$\B(H)$ the algebra of all bounded operators on a Hilbert space $H$. When
considering an operator $X\in \B(H_{1}\otimes H_{2})$, we will use the
\emph{leg-numbering notations},
\begin{equation*}
  X_{12} := X\otimes1, X_{23} := 1\otimes X \text{ and } X_{13} := (\Sigma\otimes 1)(1\otimes X)(\Sigma\otimes 1),
\end{equation*}
where $\Sigma : H_{1}\otimes H_{2} \rightarrow H_{2}\otimes H_{1}$ is the flip
map. For any two vectors $\xi, \eta\in H$, we define a linear form $\omega_{\xi
  \eta} : \B(H)\rightarrow \C$ by $\omega_{\xi \eta}(T) = \langle T(\xi),
\eta\rangle$.

\subsection{Compact quantum groups}

We briefly review the theory of compact quantum groups as introduced by
S.L.~Woronowicz in \cite{woronowicz1995compact}. In the sequel, all tensor
products of C*-algebras are spatial and we denote by $\overline\otimes$ the
tensor product of von Neumann algebras.

\begin{de}
  A \emph{compact quantum group} $\G$ is a pair $(C(\G), \Delta)$ where $C(\G)$
  is a unital C*-algebra and $\Delta : C(\G)\rightarrow C(\G)\otimes C(\G)$ is a
  unital $*$-homomorphism such that
  \begin{equation*}
    (\Delta\otimes \id)\circ\Delta = (\id\otimes\Delta)\circ\Delta
  \end{equation*}
  and the spaces $\Span\{\Delta(C(\G))(1\otimes C(\G))\}$ and
  $\Span\{\Delta(C(\G))(C(\G)\otimes 1)\}$ are both dense in $C(\G)\otimes
  C(\G)$.
\end{de}

According to \cite[Thm 1.3]{woronowicz1995compact}, any compact quantum group
$\G$ has a unique \emph{Haar state} $h\in C(\G)^{*}$, satisfying
\begin{eqnarray*}
  (\id\otimes h)\circ \Delta(a) = h(a).1 \\
  (h\otimes \id)\circ \Delta(a) = h(a).1
\end{eqnarray*}
for all $a\in C(\G)$. Let $(L^{2}(\G), \pi_{h}, \Omega)$ be the associated GNS
construction and let $C_\red(\G)$ be the image of $C(\G)$ under the GNS
representation $\pi_{h}$. It is called the \emph{reduced C*-algebra} of $\G$ and
its bicommutant in $\B(L^{2}(\G))$ is the \emph{von Neumann algebra of $\G$},
denoted by $L^{\infty}(\G)$. To study this object, we will use representations
of compact quantum groups.

\begin{de}
  A \emph{representation} of a compact quantum group $\G$ on a Hilbert space $H$
  is an operator $u\in L^{\infty}(\G)\overline{\otimes} \B(H)$ such that
  $(\Delta\otimes \id)(u) = u_{13}u_{23}$. It is said to be \emph{unitary} if
  the operator $u$ is unitary.
\end{de}

\begin{de}
  Let $\G$ be a compact quantum group and let $u$ and $v$ be two representations
  of $\G$ on Hilbert spaces $H_{u}$ and $H_{v}$ respectively. An
  \emph{intertwiner} (or \emph{morphism}) between $u$ and $v$ is a map
  $T\in\B(H_{u}, H_{v})$ such that $v(\id\otimes T) = (\id\otimes T)u$. The set
  of intertwiners between $u$ and $v$ will be denoted by $\Mor(u, v)$.
\end{de}

A representation $u$ is said to be \emph{irreducible} if $\Mor(u, u) = \C.\id$
and it is said to be \emph{contained} in $v$ if there is an injective
intertwiner between $u$ and $v$. We will say that two representations are
\emph{equivalent} (resp. \emph{unitarily equivalent}) if there is an intertwiner
between them which is an isomorphism (resp. a unitary). Let us define two
fundamental operations on representations.

\begin{de}
  Let $\G$ be a compact quantum group and let $u$ and $v$ be two representations
  of $\G$ on Hilbert spaces $H_{u}$ and $H_{v}$ respectively. The \emph{direct
    sum} of $u$ and $v$ is the diagonal sum of the operators $u$ and $v$ seen as
  an element of $L^{\infty}(\G)\overline{\otimes} \B(H_{u}\oplus H_{v})$. It is
  a representation denoted by $u\oplus v$. The \emph{tensor product} of $u$ and
  $v$ is the element $u_{12}v_{13}\in L^{\infty}(\G)\overline{\otimes}
  \B(H_{u}\otimes H_{v})$. It is a representation denoted by $u\otimes v$.
\end{de}

The theory of representations of compact groups can be generalized to this
setting (see \cite[Section 6]{woronowicz1995compact}). If $u$ is a
representation of $\G$ on a Hilbert space $H$ and if $\xi, \eta\in H$, then
$u_{\xi \eta} = (\id\otimes \omega_{\xi \eta})(u)\in C(\G)$ is called a
\emph{coefficient} of $u$.

\begin{thm}[Woronowicz]
  Every representation of a compact quantum group is equivalent to a unitary
  one. Every irreducible representation of a compact quantum group is
  finite-dimensional and every unitary representation is unitarily equivalent to
  a sum of irreducible ones. Moreover, the linear span of the coefficients of
  all irreducible representations is a dense Hopf $*$-subalgebra of $C(\G)$
  denoted by $\Pol(\G)$.
\end{thm}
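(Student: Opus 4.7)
The plan is to adapt the classical Peter--Weyl strategy, using the Haar state $h$ as a substitute for the invariant measure. Throughout, the crucial inputs are the representation identity $(\Delta \otimes \id)(u) = u_{13}u_{23}$ and the left/right invariance of $h$.

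\emph{Unitarizability.} Given a representation $u$ on $H$, I would form the positive operator $Q = (h \otimes \id)(u^{*}u) \in \B(H)$. A direct computation using the representation property and the invariance of $h$ shows that $u$ becomes an isometry once $H$ is re-equipped with the scalar product $\langle \xi, \eta \rangle_{Q} = \langle Q \xi, \eta \rangle$; applying the same trick to $u u^{*}$ produces a lower bound on $Q$. Then $Q^{1/2}$ intertwines $u$ with the unitary representation $(1 \otimes Q^{1/2}) u (1 \otimes Q^{-1/2})$.

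\emph{Finite-dimensionality and decomposition into irreducibles.} Assuming $u$ unitary and irreducible, pick a unit vector $\xi \in H$, let $p_{\xi}$ be the orthogonal projection onto $\C \xi$, and define
\begin{equation*}
T_{\xi} = (h \otimes \id)\bigl(u(1 \otimes p_{\xi})u^{*}\bigr) \in \B(H).
\end{equation*}
Invariance of $h$ forces $T_{\xi}$ to lie in $\Mor(u,u) = \C \cdot \id$, hence $T_{\xi} = \lambda \id$ for some $\lambda > 0$; comparing (partial) traces then gives $\lambda \dim H = 1$, forcing $\dim H < \infty$. For a general unitary representation, Zorn's lemma applied to orthogonal systems of finite-dimensional invariant subspaces yields the full decomposition, since any nonzero invariant subspace contains a finite-dimensional irreducible subrepresentation by the same averaging procedure. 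I expect this finite-dimensionality step to be the main technical obstacle, because the trace computation has to be justified before one knows $H$ is finite-dimensional; in practice one first extracts a nonzero finite-rank intertwiner from $T_{\xi}$ and only then invokes Schur's lemma.

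\emph{The Hopf $*$-algebra structure.} For an irreducible unitary $u$ with coefficients $u_{ij}$ in an orthonormal basis, the representation identity unfolds to $\Delta(u_{ij}) = \sum_{k} u_{ik} \otimes u_{kj}$, showing $\Pol(\G) = \Span\{u^{\alpha}_{ij} : \alpha \in \Ir(\G)\}$ is a subcoalgebra. It is closed under products via tensor products of representations and under involution via the contragredient, while the antipode is defined on coefficients by $S(u_{ij}) = u_{ji}^{*}$. Density of $\Pol(\G)$ in $C(\G)$ follows from Schur-type orthogonality relations for coefficients: if $a \in C(\G)$ is orthogonal in $L^{2}(\G)$ to every coefficient, then slicing $\Delta(a)$ against coefficients and invoking faithfulness of $h$ on $\Pol(\G)$ forces $a = 0$.
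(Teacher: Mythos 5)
The paper does not prove this statement: it is quoted as a classical theorem of Woronowicz with a reference to \cite{woronowicz1995compact}, so your proposal can only be measured against the standard Peter--Weyl development. Most of your outline matches that route and is sound. For unitarizability, left invariance of $h$ indeed gives $u^{*}(1\otimes Q)u = 1\otimes Q$ with $Q=(h\otimes\id)(u^{*}u)$, but the lower bound on $Q$ comes from the (implicit) requirement that $u$ be \emph{invertible}: then $u^{*}u\geqslant\epsilon 1$ forces $Q\geqslant\epsilon$ directly. The ``same trick applied to $uu^{*}$'' does not obviously produce this, and without invertibility the assertion is false ($u=0$ satisfies the stated definition). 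Your worry about the finite-dimensionality step resolves cleanly: $u(1\otimes p_{\xi})u^{*}$ lies in $C(\G)$ tensored with the compact operators, so $T_{\xi}$ is a compact positive intertwiner; moreover $\sum_i\langle T_{\xi}e_i,e_i\rangle$ can be computed by normality of $h$ as $h\bigl((\id\otimes\omega_{\xi\xi})(u^{*}u)\bigr)=1$ \emph{before} knowing $\dim H<\infty$, so $T_{\xi}$ is trace-class with trace $1$, is in particular nonzero, and $T_{\xi}=\lambda\,\id$ with $\lambda>0$ forces $\dim H=1/\lambda<\infty$.

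The genuine gap is the density of $\Pol(\G)$ in $C(\G)$. Orthogonality of $a$ to all coefficients in $L^{2}(\G)$ does not force $a=0$ in $C(\G)$, because $h$ need not be faithful on $C(\G)$ (only on the reduced algebra), and even $L^{2}$-density of $\Pol(\G)\Omega$ would not yield norm density in the C*-algebra. More importantly, no proof of density can avoid the cancellation axioms, i.e.\ the density of $\Span\{\Delta(C(\G))(1\otimes C(\G))\}$ and its twin: a compact monoid such as $(\left[0,1\right],\times)$ satisfies every other hypothesis, has an invariant state, and yet its only unitary representation is trivial, so the coefficients are not dense. Woronowicz's argument (and the Maes--Van Daele streamlining) uses cancellation to show that slices of $\Delta(a)$ against the Haar state are norm limits of elements of $\Pol(\G)$ and then recovers $a$ itself; your sketch never invokes these hypotheses, and this is the step you would need to supply. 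A smaller omission of the same flavour: closing $\Pol(\G)$ under the involution requires showing that the contragredient $\bar u=(u_{ij}^{*})$ is again an invertible, hence unitarizable, representation, which is not automatic.
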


\subsection{Irreducible representations}\label{subsec:irreducible}

Let $\Ir(\G)$ be the set of equivalence classes of irreducible unitary
representations of $\G$. For $\alpha\in \Ir(\G)$, we will denote by $u^{\alpha}$
a representative of the class $\alpha$ and by $H_{\alpha}$ the
finite-dimensional Hilbert space on which $u^{\alpha}$ acts. The scalar product
induced by the Haar state can be easily computed on coefficients of irreducible
representations by \cite[Eq. 6.7]{woronowicz1995compact}:
\begin{equation*}
  \left\langle u^{\alpha}_{\xi \eta}, u^{\beta}_{\xi' \eta'}\right\rangle = \delta_{\alpha, \beta}\frac{\langle \xi, \xi'\rangle \langle \eta', Q_{\alpha}\eta \rangle}{d_{\alpha}}
\end{equation*}
where $Q_{\alpha}$ is a positive matrix determined by the representation
$\alpha$ and $d_{\alpha} = \Tr(Q_\alpha) = \Tr(Q_\alpha^{-1})>0$ is called the
\emph{quantum dimension} of $\alpha$. Note that in general, $d_{\alpha}$ is
greater than $\dim(H_{\alpha})$. However, it is easy to see that the two
dimensions agree if and only if $Q_{\alpha} = \id$. When this is the case for
all $\alpha \in\Ir(\G)$ we say that $\G$ is of \emph{Kac type}.

Because the coefficients of irreducible representations are dense in $C(\G)$, it
is enough to understand products of those coefficients to describe the whole
C*-algebra structure of $C(\G)$. For simplicity, we will assume from now on that
for any two irreducible representations $\alpha$ and $\beta$, every irreducible
subrepresentation of $\alpha\otimes\beta$ appears with multiplicity one (this
assumption will always be satisfied when considering free orthogonal quantum
groups). For such a subrepresentation $\gamma$ of $\alpha\otimes\beta$, let
$v^{\alpha, \beta}_{\gamma}$ be an isometric intertwiner from $H_{\gamma}$ to
$H_{\alpha}\otimes H_{\beta}$. Then,
\begin{equation}\label{eq:product}
  u^{\alpha}_{\xi \eta}u^{\beta}_{\xi' \eta'} = \sum_{\gamma\subset \alpha\otimes \beta} u^{\gamma}_{(v_{\gamma}^{\alpha, \beta})^{*}(\xi\otimes \xi'), (v_{\gamma}^{\alpha, \beta})^{*}(\eta\otimes \eta')}.
\end{equation}
Note that even though $v^{\alpha, \beta}_{\gamma}$ is only defined up to a
complex number of modulus one, the sesquilinearity of the scalar product ensures
that the expression above is independent of this phase. We will also use the
projection $P^{\alpha,\beta}_\gamma\in \B(H_\alpha\otimes H_\beta)$ onto the
$\gamma$-homogeneous component, $P^{\alpha,\beta}_\gamma = v^{\alpha,
  \beta}_{\gamma}v^{\alpha, \beta *}_{\gamma}$, which is again independent of
the choice of $v^{\alpha, \beta}_{\gamma}$.

For any $\alpha\in \Ir(\G)$, there is a unique (up to unitary equivalence)
irreducible representation, called the \emph{contragredient representation} of
$\alpha$ and denoted by $\overline{\alpha}$, such that $\Mor(\varepsilon,
\alpha\otimes \overline{\alpha})\neq \{0\} \neq \Mor(\varepsilon,
\overline{\alpha}\otimes \alpha)$, $\varepsilon$ denoting the trivial
representation (i.e. the element $1\otimes 1\in L^{\infty}(\G)\otimes \C$). We
choose morphisms $t_{\alpha} \in \Mor(\varepsilon, \alpha\otimes
\overline{\alpha})$ and $s_{\alpha} \in \Mor(\varepsilon,
\overline{\alpha}\otimes\alpha)$ connected by the conjugate equation
\begin{equation*}
  (\id_{\alpha}\otimes s_{\alpha}^{*})(t_{\alpha}\otimes\id_{\alpha}) = \id_{\alpha},
\end{equation*}
and normalized so that $\|s_{\alpha}\| = \|t_{\alpha}\| =
\sqrt{d_{\alpha}}$. Then, $t_{\alpha}$ is unique up to a phase and $s_{\alpha}$
is determined by $t_{\alpha}$.  The morphism $t_{\alpha}$ induces a
conjugate-linear isomorphism $j_{\alpha} : H_{\alpha}\rightarrow
H_{\overline{\alpha}}$ such that, setting $j_{\alpha}(\xi) = \overline\xi$,
\begin{equation*}
  t_{\alpha} = \sum_{i=1}^{\dim(H_{\alpha})} e_{i} \otimes \overline{e_{i}}
\end{equation*}
for any orthonormal basis $(e_{i})_{i}$ of $H_{\alpha}$. Note that $j_{\alpha}$
need not be a multiple of a conjugate-linear isometry in general --- this is
however the case if $\G$ is of Kac type. Let us also record the general fact
that the map $\overline{v}^{\alpha,\beta}_{\gamma} : H_{\overline\gamma} \to
H_{\overline\beta}\otimes H_{\overline\alpha}$ defined by
\begin{equation*}
  \overline{\xi} \mapsto  \Sigma(v^{\alpha, \beta}_{\gamma}(\xi))^{\bar~\otimes\bar~}
\end{equation*}
is an isometric morphism from $\overline\gamma$ to
$\overline\beta\otimes\overline\alpha$. In particular, when there is no
multiplicity in the fusion rules $\overline{v}^{\alpha,\beta}_{\gamma}$
coincides with $v^{\overline\beta,\overline\alpha}_{\overline\gamma}$ up to a
complex number of modulus one.

\subsection{Free orthogonal quantum groups}\label{subsec:freeorthogonal}

We will be concerned in the sequel with the free orthogonal quantum groups
introduced by S.~Wang and A.~van~Daele in \cite{wang1995free} and
\cite{van1996universal}. This subsection is devoted to briefly recalling their
definition and main properties.

\begin{de}\label{de:freeqgroups}
  For $N\in \N$, we denote by $C(O_{N}^{+})$ the universal unital C*-algebra
  generated by $N^{2}$ \emph{self-adjoint} elements $(u_{ij})_{1\leqslant i,
    j\leqslant N}$ such that the matrix $u=(u_{ij})$ is \emph{unitary}. For $Q
  \in GL_N(\C)$, we denote by $C(O^+(Q))$ the unital C*-algebra generated by
  $N^{2}$ elements $(u_{ij})_{1\leqslant i, j\leqslant N}$ such that the matrix
  $u=(u_{ij})$ is \emph{unitary} and $Q\overline{u}Q^{-1} = u$, where $\overline
  u = (u_{ij}^*)$.
\end{de}

One can check that there is a unique $*$-homomorphism $\Delta : C(O^{+}(Q))
\rightarrow C(O^{+}(Q))\otimes C(O^{+}(Q))$ such that for all $i, j$,
\begin{equation*}
  \Delta(u_{ij}) = \sum_{i, j = 0}^{N}u_{ik}\otimes u_{kj}.
\end{equation*}

\begin{de}
  The pair $O_{N}^{+} = (C(O_{N}^{+}), \Delta)$ is called the \emph{free
    orthogonal quantum group} of size $N$. The pair $O^{+}(Q) = (C(O^{+}(Q)),
  \Delta)$ is called the free orthogonal quantum group of parameter $Q$.
\end{de}

One can show that the compact quantum group $O^{+}(Q)$ is of Kac type if and
only if $Q$ is a scalar multiple of a unitary matrix. Although all results of
this article apply to general free orthogonal quantum groups of Kac type with
$N\geqslant 3$, we will restrict for simplicity to the case of $O_N^+$ --- see
Section~\ref{sec:radial} for comments about the non-Kac type.  The
representation theory of free orthogonal quantum groups was computed by
T.~Banica in~\cite{banica1996theorie}:

\begin{thm}[Banica]\label{thm:freefusion}
  The equivalence classes of irreducible representations of $O_{N}^{+}$ are
  indexed by the set of integers ($u^{0}$ being the trivial representation and
  $u^{1} = u$ the fundamental one), each one is isomorphic to its contragredient
  and the tensor product is given inductively by
  \begin{equation*}
    u^{1}\otimes u^{n} = u^{n+1}\oplus u^{n-1}.
  \end{equation*}
  If $N=2$, then $d_{n} = n+1$. Otherwise,
  \begin{equation*}
    d_{n} = \frac{q^{n+1} - q^{-n-1}}{q - q^{-1}},
  \end{equation*}
  where $q + q^{-1}= N$ and $0< q< 1$. Moreover, $O_{N}^{+}$ is
  of Kac type, hence $d_{n} = \dim(H_{n})$.
\end{thm}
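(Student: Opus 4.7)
The plan is to apply Woronowicz's Tannaka-Krein reconstruction combined with the concrete description of morphism spaces via Temperley-Lieb diagrams. By Definition~\ref{de:freeqgroups} the fundamental representation $u$ of $O_N^+$ is self-conjugate: since the $u_{ij}$ are self-adjoint and $u$ is unitary, one has $\bar u = u$, so that $Q = I$ realizes the duality. Consequently the vector $t_1 = \sum_{i=1}^N e_i \otimes e_i \in \C^N \otimes \C^N$ lies in $\Mor(\varepsilon, u \otimes u)$, and the universal property of $C(O_N^+)$ shows that $O_N^+$ is the universal compact quantum group equipped with an $N$-dimensional unitary representation carrying such a self-duality morphism. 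It follows by Tannaka-Krein duality that the rigid C*-tensor category of finite-dimensional representations of $O_N^+$ coincides with the Temperley-Lieb category $TL(N)$, whose morphism spaces $\Mor(u^{\otimes m}, u^{\otimes n})$ are linearly spanned by non-crossing pair partitions of the $m+n$ endpoints.

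Once this identification is in place, I would construct the irreducibles $u^n$ inductively via the Jones-Wenzl projections $p_n$ in the Temperley-Lieb algebra $\Mor(u^{\otimes n}, u^{\otimes n})$. These idempotents are characterized by being absorbed by every non-trivial cap-cup generator, and they are well-defined and non-zero precisely because $N \geq 2$, so that the relevant denominators do not vanish and the Markov trace $d_n$ of $p_n$ stays positive. By induction on $n$ one shows that $p_n$ is the projection onto an irreducible subrepresentation $u^n$, that $u^0 = \varepsilon$ and $u^1 = u$, and that $u^{\otimes n}$ splits as $u^n \oplus u^{n-2} \oplus \cdots$. The fusion rule $u \otimes u^n = u^{n+1} \oplus u^{n-1}$ then drops out by comparing the two inductive decompositions of $u^{\otimes (n+1)}$ together with Frobenius reciprocity; self-conjugacy $\overline{u^n} \simeq u^n$ is inherited from that of $u^{\otimes n}$; and Kac-type is automatic, because the only $Q$-matrix in sight is the identity on $\C^N$, which propagates to identity $Q_n$ for every subrepresentation of $u^{\otimes n}$.

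For the quantum dimensions, the fusion rule yields the recursion $d_{n+1} = N d_n - d_{n-1}$ with $d_0 = 1$ and $d_1 = N$. Writing $N = q + q^{-1}$ with $0 < q \leq 1$, direct verification shows that $d_n = (q^{n+1} - q^{-n-1})/(q - q^{-1})$ solves this recursion for $N \geq 3$; at $N = 2$ the degenerate limit $q = 1$ yields $d_n = n+1$. The main obstacle is the first step, namely the identification of the representation category with $TL(N)$: it requires both the Tannaka-Krein reconstruction theorem and a careful analysis of the Temperley-Lieb algebra at parameter $N$, in particular the non-vanishing of the Jones-Wenzl projections, which underpins the entire inductive construction.
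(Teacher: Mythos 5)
This theorem is not proved in the paper at all --- it is quoted as Banica's result with a reference to his 1996 paper --- and your sketch is essentially Banica's original argument: universality of $C(O_N^+)$ plus Woronowicz's Tannaka--Krein duality identifies the morphism spaces with the span of the non-crossing pairings (the Temperley--Lieb category), after which the Jones--Wenzl projections cut out the irreducibles, yield the fusion rule by comparing decompositions of $u^{\otimes(n+1)}$, and give the dimension recursion $d_{n+1}=Nd_n-d_{n-1}$ whose solution is the quantum integer. The outline is correct; the one imprecision is the claim that the representation category \emph{coincides} with $TL(N)$, which would require linear independence of the pairings acting on $(\C^N)^{\otimes n}$, but your argument never actually uses this --- the spanning direction (from universality) together with $\Tr(p_n)=d_n>0$ and the absorption property of $p_n$ already suffices for irreducibility, mutual inequivalence and exhaustion of $\Ir(O_N^+)$.
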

There is an elementary estimate on $d_{n}$ given by $q^{-n}(1-q^{2})\leqslant
d_{n}\leqslant q^{-n}/(1-q^{2})$ . We will use it several times in the sequel
without referring to it explicitly.

\bigskip

To be able to do computations, we will use a particular set of representatives
of the irreducible representations. More precisely, let $H_{1} = \C^{N}$ be the
carrier space of the fundamental representation $u = u^{1}$. Then, for each
$n\in \N$, we let $H_{n}$ be the unique subspace of $H_{1}^{\otimes n}$ on which
the restriction of $u^{\otimes n}$ is equivalent to $u^{n}$. We denote by
$\id_{n}$ the identity of $H_{n}$.

It is easy to check that the map $t_{1} = \sum_{i=1}^{N} e_{i}\otimes e_{i}$
satisfies the requirements for the distinguished morphism $t_{u} \in
\Mor(\varepsilon, u\otimes \overline{u})$ as defined in the previous subsection,
with $\bar u= u$ and $s_1=t_1$. We fix this choice in the rest of the article
and we set
\begin{equation*}
  t_{n} = (P_{n}\otimes P_{n}) (t_{1})_{1, 2n}(t_{1})_{2, 2n-1} \dots (t_{1})_{n, n+1} \in H_{n}\otimes H_{n}.
\end{equation*}
We then have $s_{n} = t_{n}$, $j_{n}\circ j_{n} = \id_{n}$, and $j_{n}$ is a
conjugate linear unitary. The standard trace on $\B(H_{n})$ is given by
\begin{equation*}
  \Tr_{n}(f) = t_{n}^{*}(f\otimes \id)t_{n}
\end{equation*}
and the normalized trace by $\tr_{n}(f) = d_{n}^{-1}\Tr_{n}(f)$. Moreover,
writing again $\overline\zeta = j_{n}(\zeta)$ for $\zeta\in H_{n}$ we have
\begin{equation*}
  t_{n}^{*}(\zeta\otimes\id_{n}) = \overline{\zeta}^{*} \text{ and } t_{n}^{*}(\id_{n}\otimes\zeta) = s_{n}^{*}(\id_{n}\otimes\zeta) = \overline{\zeta}^{*}.
\end{equation*}

We will denote by $P_{n}$ the orthogonal projection from $H_{1}^{\otimes n}$
onto $H_{n}$, sometimes called the \emph{Jones-Wenzl projection}. Note that if
$a+b = n$, then $P_{n}(P_{a}\otimes P_{b}) = P_{n}$, so that we may also see
$P_{n}$ as an element of $\B(H_{a}\otimes H_{b})$. In other words we have, with
the notation of the previous subsection, $P_n = P_n^{a,b}$ for any $a$, $b$ such
that $a+b=n$.  The sequence of projections $(P_{n})_{n\in \N}$ satisfies the
so-called \emph{Wenzl recursion relation} (see for instance \cite[Eq
3.8]{frenkel1997canonical} or \cite[Eq 7.4]{vaes2007boundary}):
\begin{equation}\label{eq:wenzlrecursion}
  P_{n} = (P_{n-1}\otimes \id_{1}) + \sum_{l=1}^{n-1} (-1)^{n-l} \frac{d_{l-1}}{d_{n-1}} \left(\id_{1}^{\otimes (l-1)}\otimes t_{1}\otimes \id_{1}^{\otimes (n-l-1)}\otimes t_{1}^{*}\right)(P_{n-1}\otimes \id_{1}).
\end{equation}
We also record the following obvious fact, which will be used frequently in the
sequel without explicit reference: for any $a$, $b$ we have $(\id_{a}\otimes
t_{1}\otimes \id_{b})^{*}P_{a+b+2} = 0$. Indeed the image of $(\id_{a}\otimes
t_{1}\otimes\id_{b})^{*}$ is contained in $H_{a}\otimes H_{b}$ which has no
component equivalent to $H_{a+b+2}$. A first application is the following
reduced form of the Wenzl relation above, which is actually the original
relation presented in \cite{wenzl1987projections}:
\begin{equation}\label{eq:wenzlrecursionreduced}
  P_{n} = (P_{n-1}\otimes \id_{1}) - \frac{d_{n-2}}{d_{n-1}} (P_{n-1}\otimes \id_{1}) \left(\id_{1}^{\otimes (n-2)}\otimes t_{1} t_{1}^{*}\right)(P_{n-1}\otimes \id_{1}).
\end{equation}
We also have a reflected version as follows:
\begin{equation}\label{eq:wenzlrecursionreflected}
  P_{n} = (\id_{1}\otimes P_{n-1}) - \frac{d_{n-2}}{d_{n-1}}(\id_{1}\otimes P_{n-1}) \left(t_{1} t_{1}^{*}\otimes \id_{1}^{\otimes (n-2)}\right)(\id_{1}\otimes P_{n-1}).
\end{equation}

\section{Manipulating the Jones-Wenzl projections}
\label{sec:Wenzl}
 
In this section we establish two results concerning the sequence of projections
$P_{n}$ in the representation category of $O_N^+$. The first one studies partial
traces of these projections, while the second one is a kind of generalization of
Wenzl's recursion relation.

\subsection{Partial traces of projections}

The first result we need concerns projections onto irreducible representations
that are cut down by a trace. To explain what is going on, let us first consider
two integers $a, b\in \N$. Then, the operator
\begin{equation*}
  x_{a, b} = (\id_{a}\otimes\tr_{b})(P_{a+b}) =  d_{b}^{-1}(\id_{a}\otimes t_{b}^{*})(P_{a+b}\otimes \id_{b})(\id_{a}\otimes t_{b})\in \B(H_{\alpha})
\end{equation*}
is a scalar multiple of the identity because it is an intertwiner and $u^{a}$ is
irreducible. Of course, the same holds for $(\tr_{b}\otimes\id_{c})(P_{b+c})\in
\B(H_{c})$. However in general $x_{a, b, c} =
(\id_{a}\otimes\tr_{b}\otimes\id_{c})(P_{a+b+c})$ is not a scalar multiple of
the identity. In fact, an easy explicit computation already shows that $x_{1, 1,
  1} \in \B(H_{1}\otimes H_{1})$ is a non-trivial linear combination of the
identity and the flip map, in particular it is not even an
intertwiner. Proposition \ref{prop:partialtrace}, which is the main result of
this subsection, shows that when $b$ tends to $+\infty$, the partially traced
projection $x_{a, b, c}$ becomes asymptotically scalar.

To prove this, we need a lemma concerning the following construction: for a
linear map $f\in \B(H_{k})$, we define its \emph{rotated version} $\rho(f)$ by
\begin{equation*}
  \rho(f) = (P_{k}\otimes t_{1}^{*})(\id_{1}\otimes f\otimes\id_{1})(t_{1}\otimes P_{k})\in \B(H_{k}).
\end{equation*}
Diagrammatically, this transformation is represented as follows:
\begin{equation*}
  \setlength{\unitlength}{4144sp}%
\begingroup\makeatletter\ifx\SetFigFont\undefined%
\gdef\SetFigFont#1#2#3#4#5{%
  \reset@font\fontsize{#1}{#2pt}%
  \fontfamily{#3}\fontseries{#4}\fontshape{#5}%
  \selectfont}%
\fi\endgroup%
\begin{picture}(1107,1374)(2236,-973)
\thinlines
{\color[rgb]{0,0,0}\put(3151,-421){\oval(180,180)[bl]}
\put(3151,-421){\oval(180,180)[br]}
}%
{\color[rgb]{0,0,0}\put(2521,-151){\oval(180,180)[tr]}
\put(2521,-151){\oval(180,180)[tl]}
}%
{\color[rgb]{0,0,0}\put(3061,299){\circle*{10}}
}%
{\color[rgb]{0,0,0}\put(3016,299){\circle*{10}}
}%
{\color[rgb]{0,0,0}\put(2971,299){\circle*{10}}
}%
{\color[rgb]{0,0,0}\put(2926,-61){\circle*{10}}
}%
{\color[rgb]{0,0,0}\put(2881,-61){\circle*{10}}
}%
{\color[rgb]{0,0,0}\put(2836,-61){\circle*{10}}
}%
{\color[rgb]{0,0,0}\put(2836,-511){\circle*{10}}
}%
{\color[rgb]{0,0,0}\put(2791,-511){\circle*{10}}
}%
{\color[rgb]{0,0,0}\put(2746,-511){\circle*{10}}
}%
{\color[rgb]{0,0,0}\put(2791,-871){\circle*{10}}
}%
{\color[rgb]{0,0,0}\put(2746,-871){\circle*{10}}
}%
{\color[rgb]{0,0,0}\put(2701,-871){\circle*{10}}
}%
{\color[rgb]{0,0,0}\put(3241,-421){\line( 0, 1){450}}
}%
{\color[rgb]{0,0,0}\put(2431,-151){\line( 0,-1){450}}
}%
{\color[rgb]{0,0,0}\put(2611,-421){\line( 0,-1){180}}
}%
{\color[rgb]{0,0,0}\put(2971,-421){\line( 0,-1){180}}
}%
{\color[rgb]{0,0,0}\put(3061,-151){\line( 0, 1){180}}
}%
{\color[rgb]{0,0,0}\put(2701,-151){\line( 0, 1){180}}
}%
{\color[rgb]{0,0,0}\put(2431,-781){\line( 0,-1){180}}
}%
{\color[rgb]{0,0,0}\put(2971,-781){\line( 0,-1){180}}
}%
{\color[rgb]{0,0,0}\put(2521,-781){\line( 0,-1){180}}
}%
{\color[rgb]{0,0,0}\put(2701,209){\line( 0, 1){180}}
}%
{\color[rgb]{0,0,0}\put(3241,209){\line( 0, 1){180}}
}%
{\color[rgb]{0,0,0}\put(2791,209){\line( 0, 1){180}}
}%
{\color[rgb]{0,0,0}\put(2341,-781){\framebox(720,180){}}
}%
{\color[rgb]{0,0,0}\put(2521,-421){\framebox(630,270){}}
}%
{\color[rgb]{0,0,0}\put(2611, 29){\framebox(720,180){}}
}%
\put(2971, 96){\makebox(0,0)[b]{\smash{{\SetFigFont{12}{14.4}{\rmdefault}{\mddefault}{\updefault}{\color[rgb]{0,0,0}$p_k$}%
}}}}
\put(2701,-714){\makebox(0,0)[b]{\smash{{\SetFigFont{12}{14.4}{\rmdefault}{\mddefault}{\updefault}{\color[rgb]{0,0,0}$p_k$}%
}}}}
\put(2251,-376){\makebox(0,0)[rb]{\smash{{\SetFigFont{12}{14.4}{\rmdefault}{\mddefault}{\updefault}{\color[rgb]{0,0,0}$\rho(f)=$}%
}}}}
\put(2836,-331){\makebox(0,0)[b]{\smash{{\SetFigFont{12}{14.4}{\rmdefault}{\mddefault}{\updefault}{\color[rgb]{0,0,0}$f$}%
}}}}
\end{picture}%

\end{equation*}
In the sequel, $\|.\|_{\HS}$ will denote the non-normalized Hilbert-Schmidt
norm, i.e. $\|f\|_{HS}^{2} = \Tr(f^{*}f)$.

\begin{lem}\label{lem:tracezero}
  For any $f\in \B(H_{k})$, $\Tr(\rho(f)) = (-1)^{k-1}\Tr(f)/d_{k-1}$. Moreover,
  we have $\|\rho(f)\|_{\HS} \leqslant \|f\|_{\HS}$.
\end{lem}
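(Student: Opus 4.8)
The plan is to read $\rho$ as a one-click rotation of the planar calculus compressed by the Jones--Wenzl projections, and to treat the two assertions separately. Concretely, I regard $f\in\B(H_k)$ as the operator $P_kfP_k\in\B(H_1^{\otimes k})$ and introduce the \emph{unprojected rotation}
\[
  R(g) = (\id_1^{\otimes k}\otimes t_1^*)(\id_1\otimes g\otimes\id_1)(t_1\otimes\id_1^{\otimes k})\in\B(H_1^{\otimes k}),\qquad g\in\B(H_1^{\otimes k}).
\]
Unwinding the definition of $\rho$ and using $P_kfP_k=f$ together with $P_k(P_a\otimes P_b)=P_k$, one checks that $\rho(f)=P_k\,R(f)\,P_k$. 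Thus everything reduces to understanding $R$ and the effect of compressing by $P_k$ on both sides.

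For the Hilbert--Schmidt bound I would first observe that $R$ does nothing but relabel matrix coefficients. Writing $g^{c_1\dots c_k}_{a_1\dots a_k}=\langle g(e_{a_1}\otimes\cdots\otimes e_{a_k}),e_{c_1}\otimes\cdots\otimes e_{c_k}\rangle$ in the fixed orthonormal basis, a direct computation with $t_1=\sum_i e_i\otimes e_i$ yields
\[
  \langle R(g)(e_{a_1}\otimes\cdots\otimes e_{a_k}),e_{p_1}\otimes\cdots\otimes e_{p_k}\rangle = g^{\,p_2\dots p_k\,a_k}_{\,p_1\,a_1\dots a_{k-1}},
\]
so that $R$ merely permutes the family of coefficients of $g$, moving the last output index into the input block and the first input index into the output block. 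In particular $\|R(g)\|_{\HS}=\|g\|_{\HS}$. Since compression by an orthogonal projection is a contraction for $\|\cdot\|_{\HS}$ (one has $\|PX\|_{\HS}^2=\Tr(X^*PX)\leqslant\|X\|_{\HS}^2$), I obtain
\[
  \|\rho(f)\|_{\HS}=\|P_k R(f) P_k\|_{\HS}\leqslant \|R(f)\|_{\HS}=\|f\|_{\HS},
\]
which is the second assertion; here $\|f\|_{\HS}$ computed in $\B(H_k)$ and in $\B(H_1^{\otimes k})$ agree because $f=P_kfP_k$.

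For the trace identity I would compute $\Tr(\rho(f))=\Tr(P_k R(f) P_k)=\Tr(R(f)P_k)$ by cyclicity, and then expand the remaining $P_k$ through the reduced Wenzl relation \eqref{eq:wenzlrecursionreduced}. The leading term $\Tr(R(f)(P_{k-1}\otimes\id_1))$ \emph{vanishes}: performing the index contraction, the $\delta$ coming from $\id_1$ forces the last two output indices of $f$ to coincide and be summed, that is, it applies $\id_1^{\otimes(k-2)}\otimes t_1^*$ to the output legs of $f=P_kfP_k$, and this is zero since $(\id_1^{\otimes(k-2)}\otimes t_1^*)P_k=0$ by uncappability of the Jones--Wenzl projection. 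Only the correction term survives, carrying the factor $-d_{k-2}/d_{k-1}$; after collapsing the resulting $t_1t_1^*$-bubble this reduces the computation to the analogous one on $k-1$ strands, applied to a partial rotation/trace $\hat f\in\B(H_{k-1})$ of $f$ with $\Tr(\hat f)=\Tr(f)$. Feeding this into an induction, with base case $k=1$ where $R(f)=f^{\mathrm T}$ and hence $\Tr(\rho(f))=\Tr(f)=(-1)^{0}\Tr(f)/d_0$, the telescoping product $\prod_{i=1}^{k-1}\bigl(-d_{i-1}/d_i\bigr)=(-1)^{k-1}/d_{k-1}$ gives $\Tr(\rho(f))=(-1)^{k-1}\Tr(f)/d_{k-1}$.

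I expect the inductive step for the trace to be the only delicate point. While the vanishing of the leading term is clean, making the correction term precise requires identifying the reduced operator $\hat f$ after collapsing the $t_1t_1^*$-bubble and verifying that the reduction preserves the ordinary trace; equivalently, one must show that the compressed rotated projection $P_k R(P_k)P_k$ equals the scalar $(-1)^{k-1}d_{k-1}^{-1}P_k$, which is just the first assertion restated. The bookkeeping with the overlapping projections $P_{k-1}$, $P_k$ and the rotation is where the work lies; the Hilbert--Schmidt bound, by contrast, is immediate once one notices that $R$ only permutes matrix coefficients.
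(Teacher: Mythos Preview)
Your argument for the Hilbert--Schmidt bound is correct and in fact cleaner than the paper's: you note that the unprojected rotation $R$ merely permutes matrix coefficients and is therefore an $\HS$-isometry, whereas the paper computes $\Tr(\rho(f)^*\rho(f))$ directly and bounds it by dropping the projections $P_k$ in the middle of the product.

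For the trace identity your route is genuinely different from the paper's. The paper does \emph{not} proceed by induction. Instead it first rewrites $\Tr(\rho(f))$, via a cyclic shift of the trace, as
\[
  \Tr\bigl((\id_1^{\otimes(k-1)}\otimes t_1^*)(P_k\otimes\id_1)(\id_1\otimes f)(t_1\otimes\id_1^{\otimes(k-1)})\bigr),
\]
and then expands $P_k$ using the \emph{full} Wenzl formula~\eqref{eq:wenzlrecursion}. The $(P_{k-1}\otimes\id_1)$ term and all terms with $l>1$ vanish by uncappability of the range of $f$, and the single surviving $l=1$ term yields $(-1)^{k-1}\Tr(f)/d_{k-1}$ directly. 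No recursion is needed.

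Your inductive scheme via the reduced Wenzl relation also works, and your computation of the leading term is correct. The point you flag as delicate is indeed the only subtle one, and there is a small imprecision in your statement of it: the reduced operator is $\hat f=(\id_1^{\otimes(k-1)}\otimes\Tr_1)(f)$, which lies in $\B(H_1^{\otimes(k-1)})$ but \emph{not} in $\B(H_{k-1})$ in general. Thus the inductive hypothesis cannot be stated for maps in $\B(H_{k-1})$. What makes the induction go through is that $\hat f$, and all further iterated partial traces, inherit from $P_k$ the property of being annihilated by every internal cap $(\id^{\otimes j}\otimes t_1^*\otimes\id^{\otimes\bullet})$, and this is all that is needed for the leading term to vanish at each stage. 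Once the recursion $\Tr(R(f)P_k)=-\tfrac{d_{k-2}}{d_{k-1}}\Tr(R(\hat f)P_{k-1})$ is established, the telescoping product gives the result exactly as you indicate. So your approach is sound, but the inductive hypothesis must be phrased in terms of uncappability rather than membership in $\B(H_{k-1})$; the paper's one-shot use of the full Wenzl formula sidesteps this bookkeeping entirely.
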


\begin{proof}
  For $k=1$ we have
  \begin{align*}
    \Tr(\rho(f)) & = t_{1}^{*}(\id_{1}^{\otimes 2}\otimes t_{1}^{*}) (\id_{1}^{\otimes 2}\otimes f\otimes\id_{1})(\id_{1}\otimes t_{1}\otimes \id_{1})t_{1} \\
    & = t_{1}^{*}(f\otimes \id_{1})(t_{1}^{*}\otimes\id_{1}^{\otimes 2})(\id_{1}\otimes t_{1}\otimes \id_{1})t_{1} \\
    & = t_{1}^{*}(f\otimes\id_{1})t_{1} = \Tr(f).
  \end{align*}
  On diagrams, computing the trace corresponds to connecting upper and lower
  points pairwise by non-crossing lines on the left or on the
  right. Representing this by dotted lines for clarity, the computation above
  can be pictured as follows:
  \begin{equation*}
    \setlength{\unitlength}{4144sp}%
\begingroup\makeatletter\ifx\SetFigFont\undefined%
\gdef\SetFigFont#1#2#3#4#5{%
  \reset@font\fontsize{#1}{#2pt}%
  \fontfamily{#3}\fontseries{#4}\fontshape{#5}%
  \selectfont}%
\fi\endgroup%
\begin{picture}(1290,564)(2146,-1873)
\thinlines
{\color[rgb]{0,0,0}\put(2521,-1501){\oval(180,180)[tr]}
\put(2521,-1501){\oval(180,180)[tl]}
}%
{\color[rgb]{0,0,0}\put(2701,-1681){\oval(180,180)[bl]}
\put(2701,-1681){\oval(180,180)[br]}
}%
{\color[rgb]{0,0,0}\put(2521,-1681){\framebox(180,180){}}
}%
{\color[rgb]{0,0,0}\put(2791,-1681){\line( 0, 1){180}}
}%
{\color[rgb]{0,0,0}\put(2431,-1681){\line( 0, 1){180}}
}%
{\color[rgb]{0,0,0}\multiput(2791,-1501)(0.00000,120.00000){2}{\line( 0, 1){ 60.000}}
\multiput(2791,-1321)(-128.57143,0.00000){4}{\line(-1, 0){ 64.286}}
\multiput(2341,-1321)(0.00000,-120.00000){5}{\line( 0,-1){ 60.000}}
\put(2341,-1861){\line( 1, 0){ 90}}
\multiput(2431,-1861)(0.00000,120.00000){2}{\line( 0, 1){ 60.000}}
}%
{\color[rgb]{0,0,0}\put(3061,-1681){\framebox(180,180){}}
}%
{\color[rgb]{0,0,0}\put(3151,-1501){\line( 0, 1){ 90}}
\multiput(3151,-1411)(120.00000,0.00000){2}{\line( 1, 0){ 60.000}}
\multiput(3331,-1411)(0.00000,-102.85714){4}{\line( 0,-1){ 51.429}}
\multiput(3331,-1771)(-120.00000,0.00000){2}{\line(-1, 0){ 60.000}}
\put(3151,-1771){\line( 0, 1){ 90}}
}%
\put(2611,-1636){\makebox(0,0)[b]{\smash{{\SetFigFont{12}{14.4}{\rmdefault}{\mddefault}{\updefault}{\color[rgb]{0,0,0}$f$}%
}}}}
\put(2926,-1636){\makebox(0,0)[b]{\smash{{\SetFigFont{12}{14.4}{\rmdefault}{\mddefault}{\updefault}{\color[rgb]{0,0,0}$=$}%
}}}}
\put(2161,-1636){\makebox(0,0)[rb]{\smash{{\SetFigFont{12}{14.4}{\rmdefault}{\mddefault}{\updefault}{\color[rgb]{0,0,0}$\Tr(\rho(f))=$}%
}}}}
\put(3421,-1636){\makebox(0,0)[lb]{\smash{{\SetFigFont{12}{14.4}{\rmdefault}{\mddefault}{\updefault}{\color[rgb]{0,0,0}$=\Tr(f)$}%
}}}}
\put(3151,-1636){\makebox(0,0)[b]{\smash{{\SetFigFont{12}{14.4}{\rmdefault}{\mddefault}{\updefault}{\color[rgb]{0,0,0}$f$}%
}}}}
\end{picture}%

  \end{equation*}
  When $k\geqslant 2$, we first perform the transformation
  \begin{align*}
    \Tr(\rho(f)) & = \Tr((P_{k}\otimes t_{1}^{*})(\id_{1}\otimes f\otimes\id_{1})(t_{1}\otimes \id_{1}^{\otimes k})) \\
    & = \Tr((\id_1^{\otimes k-1}\otimes
    t_{1}^{*})(P_{k}\otimes\id_{1})(\id_{1}\otimes
    f)(t_{1}\otimes\id_{1}^{\otimes k-1}))
  \end{align*}
  which can be diagrammatically represented as follows:
  \begin{equation*}
    \setlength{\unitlength}{4144sp}%
\begingroup\makeatletter\ifx\SetFigFont\undefined%
\gdef\SetFigFont#1#2#3#4#5{%
  \reset@font\fontsize{#1}{#2pt}%
  \fontfamily{#3}\fontseries{#4}\fontshape{#5}%
  \selectfont}%
\fi\endgroup%
\begin{picture}(2862,1104)(1066,-1738)
{\color[rgb]{0,0,0}\thinlines
\put(1846,-781){\circle*{10}}
}%
{\color[rgb]{0,0,0}\put(1891,-781){\circle*{10}}
}%
{\color[rgb]{0,0,0}\put(1936,-781){\circle*{10}}
}%
{\color[rgb]{0,0,0}\put(1756,-1231){\circle*{10}}
}%
{\color[rgb]{0,0,0}\put(1801,-1231){\circle*{10}}
}%
{\color[rgb]{0,0,0}\put(1846,-1231){\circle*{10}}
}%
{\color[rgb]{0,0,0}\put(1621,-1591){\circle*{10}}
}%
{\color[rgb]{0,0,0}\put(1666,-1591){\circle*{10}}
}%
{\color[rgb]{0,0,0}\put(1711,-1591){\circle*{10}}
}%
{\color[rgb]{0,0,0}\put(3601,-781){\circle*{10}}
}%
{\color[rgb]{0,0,0}\put(3646,-781){\circle*{10}}
}%
{\color[rgb]{0,0,0}\put(3691,-781){\circle*{10}}
}%
{\color[rgb]{0,0,0}\put(3331,-1591){\circle*{10}}
}%
{\color[rgb]{0,0,0}\put(3376,-1591){\circle*{10}}
}%
{\color[rgb]{0,0,0}\put(3421,-1591){\circle*{10}}
}%
{\color[rgb]{0,0,0}\put(3466,-1231){\circle*{10}}
}%
{\color[rgb]{0,0,0}\put(3511,-1231){\circle*{10}}
}%
{\color[rgb]{0,0,0}\put(3556,-1231){\circle*{10}}
}%
{\color[rgb]{0,0,0}\put(1531,-871){\oval(180,180)[tr]}
\put(1531,-871){\oval(180,180)[tl]}
}%
{\color[rgb]{0,0,0}\put(2161,-1141){\oval(180,180)[bl]}
\put(2161,-1141){\oval(180,180)[br]}
}%
{\color[rgb]{0,0,0}\put(3286,-871){\oval(180,180)[tr]}
\put(3286,-871){\oval(180,180)[tl]}
}%
{\color[rgb]{0,0,0}\put(3736,-1501){\oval(180,180)[bl]}
\put(3736,-1501){\oval(180,180)[br]}
}%
{\color[rgb]{0,0,0}\put(1441,-871){\line( 0,-1){450}}
}%
{\color[rgb]{0,0,0}\put(2251,-1141){\line( 0, 1){360}}
}%
{\color[rgb]{0,0,0}\put(2071,-871){\line( 0, 1){ 90}}
}%
{\color[rgb]{0,0,0}\put(1711,-871){\line( 0, 1){180}}
}%
{\color[rgb]{0,0,0}\put(1621,-1141){\line( 0,-1){180}}
}%
{\color[rgb]{0,0,0}\put(1981,-1141){\line( 0,-1){180}}
}%
{\color[rgb]{0,0,0}\put(1981,-1501){\line( 0,-1){ 90}}
}%
{\color[rgb]{0,0,0}\put(1441,-1501){\line( 0,-1){ 90}}
}%
{\color[rgb]{0,0,0}\put(1891,-1501){\line( 0,-1){ 90}}
}%
{\color[rgb]{0,0,0}\put(1531,-1141){\framebox(630,270){}}
}%
{\color[rgb]{0,0,0}\put(1351,-1501){\framebox(720,180){}}
}%
{\color[rgb]{0,0,0}\multiput(1711,-691)(-128.57143,0.00000){4}{\line(-1, 0){ 64.286}}
\multiput(1261,-691)(0.00000,-116.47059){9}{\line( 0,-1){ 58.235}}
\multiput(1261,-1681)(120.00000,0.00000){2}{\line( 1, 0){ 60.000}}
\put(1441,-1681){\line( 0, 1){ 90}}
}%
{\color[rgb]{0,0,0}\multiput(2071,-781)(0.00000,90.00000){2}{\line( 0, 1){ 45.000}}
\multiput(2071,-646)(-114.00000,0.00000){8}{\line(-1, 0){ 57.000}}
\multiput(1216,-646)(0.00000,-113.68421){10}{\line( 0,-1){ 56.842}}
\multiput(1216,-1726)(122.72727,0.00000){6}{\line( 1, 0){ 61.364}}
\multiput(1891,-1726)(0.00000,90.00000){2}{\line( 0, 1){ 45.000}}
}%
{\color[rgb]{0,0,0}\multiput(2251,-781)(0.00000,90.00000){2}{\line( 0, 1){ 45.000}}
\put(2251,-646){\line( 1, 0){ 90}}
\multiput(2341,-646)(0.00000,-113.68421){10}{\line( 0,-1){ 56.842}}
\multiput(2341,-1726)(-102.85714,0.00000){4}{\line(-1, 0){ 51.429}}
\multiput(1981,-1726)(0.00000,90.00000){2}{\line( 0, 1){ 45.000}}
}%
{\color[rgb]{0,0,0}\put(3196,-871){\line( 0,-1){450}}
}%
{\color[rgb]{0,0,0}\put(3826,-871){\line( 0, 1){ 90}}
}%
{\color[rgb]{0,0,0}\put(3466,-871){\line( 0, 1){180}}
}%
{\color[rgb]{0,0,0}\put(3376,-1141){\line( 0,-1){180}}
}%
{\color[rgb]{0,0,0}\put(3196,-1501){\line( 0,-1){ 90}}
}%
{\color[rgb]{0,0,0}\put(3286,-1141){\framebox(630,270){}}
}%
{\color[rgb]{0,0,0}\multiput(3466,-691)(-128.57143,0.00000){4}{\line(-1, 0){ 64.286}}
\multiput(3016,-691)(0.00000,-116.47059){9}{\line( 0,-1){ 58.235}}
\multiput(3016,-1681)(120.00000,0.00000){2}{\line( 1, 0){ 60.000}}
\put(3196,-1681){\line( 0, 1){ 90}}
}%
{\color[rgb]{0,0,0}\multiput(3826,-781)(0.00000,90.00000){2}{\line( 0, 1){ 45.000}}
\multiput(3826,-646)(-114.00000,0.00000){8}{\line(-1, 0){ 57.000}}
\multiput(2971,-646)(0.00000,-113.68421){10}{\line( 0,-1){ 56.842}}
\multiput(2971,-1726)(106.36364,0.00000){6}{\line( 1, 0){ 53.182}}
\multiput(3556,-1726)(0.00000,90.00000){2}{\line( 0, 1){ 45.000}}
}%
{\color[rgb]{0,0,0}\put(3556,-1501){\line( 0,-1){ 90}}
}%
{\color[rgb]{0,0,0}\put(3646,-1141){\line( 0,-1){180}}
}%
{\color[rgb]{0,0,0}\put(3826,-1501){\line( 0, 1){360}}
}%
{\color[rgb]{0,0,0}\put(3106,-1501){\framebox(630,180){}}
}%
\put(1081,-1276){\makebox(0,0)[rb]{\smash{{\SetFigFont{12}{14.4}{\rmdefault}{\mddefault}{\updefault}{\color[rgb]{0,0,0}$\Tr(\rho(f))=$}%
}}}}
\put(2656,-1276){\makebox(0,0)[b]{\smash{{\SetFigFont{12}{14.4}{\rmdefault}{\mddefault}{\updefault}{\color[rgb]{0,0,0}$=$}%
}}}}
\put(1846,-1051){\makebox(0,0)[b]{\smash{{\SetFigFont{12}{14.4}{\rmdefault}{\mddefault}{\updefault}{\color[rgb]{0,0,0}$f$}%
}}}}
\put(3601,-1051){\makebox(0,0)[b]{\smash{{\SetFigFont{12}{14.4}{\rmdefault}{\mddefault}{\updefault}{\color[rgb]{0,0,0}$f$}%
}}}}
\put(1711,-1434){\makebox(0,0)[b]{\smash{{\SetFigFont{12}{14.4}{\rmdefault}{\mddefault}{\updefault}{\color[rgb]{0,0,0}$p_k$}%
}}}}
\put(3421,-1434){\makebox(0,0)[b]{\smash{{\SetFigFont{12}{14.4}{\rmdefault}{\mddefault}{\updefault}{\color[rgb]{0,0,0}$p_k$}%
}}}}
\end{picture}%

  \end{equation*}
  Then, we use the adjoint of Wenzl's formula \eqref{eq:wenzlrecursion}. The
  term with $P_{k-1}\otimes\id_{1}$ yields
  \begin{equation*}
    \Tr\left((\id_{1}^{\otimes (k-1)}\otimes t_{1}^{*})(P_{k-1}\otimes\id_{1}^{\otimes 2})(\id_{1}\otimes f)(t_{1}\otimes\id_{1}^{\otimes (k-1)})\right) = \Tr\left((P_{k-1}\otimes t_{1}^{*})(\id_{1}\otimes f)(t_{1}\otimes\id_{1}^{\otimes (k-1)})\right).
  \end{equation*}
  This vanishes because the range of $f$ is contained in $H_{k}$ and
  $\id_{1}^{\otimes(k-2)}\otimes t_{1}^{*}$ is an intertwiner to
  $H_{1}^{\otimes(k-2)}$, which contains no subrepresentation equivalent to
  $H_k$. The terms from \eqref{eq:wenzlrecursion} with $l>1$ also vanish because
  $(\id_{1}^{\otimes (l-1)}\otimes t_{1}^{*}\otimes \id_{1}^{\otimes
    (k-l-1)}\otimes t_{1}\otimes\id_{1})(\id_{1}\otimes f) = 0$ for the same
  reason as before. Hence, we are left with
  \begin{align*}
    \Tr(\rho(f)) & =  \frac{(-1)^{k-1}}{d_{k-1}}\Tr\left((P_{k-1}\otimes t_{1}^{*})(t_{1}^{*}\otimes \id_{1}^{\otimes (k-2)}\otimes t_{1}\otimes\id_{1})(\id_{1}\otimes f)(t_{1}\otimes\id_{1}^{\otimes (k-1)})\right) \\
    & = \frac{(-1)^{k-1}}{d_{k-1}}\Tr\left(P_{k-1}(t_{1}^{*}\otimes
      \id_{1}^{\otimes (k-1)})(\id_{1}\otimes f)(t_{1}\otimes\id_{1}^{\otimes
        (k-1)})\right) = \frac{(-1)^{k-1}}{d_{k-1}} \Tr(f).
  \end{align*}
  Here is the diagrammatic computation:
  \begin{equation*}
    \setlength{\unitlength}{4144sp}%
\begingroup\makeatletter\ifx\SetFigFont\undefined%
\gdef\SetFigFont#1#2#3#4#5{%
  \reset@font\fontsize{#1}{#2pt}%
  \fontfamily{#3}\fontseries{#4}\fontshape{#5}%
  \selectfont}%
\fi\endgroup%
\begin{picture}(2592,1284)(2734,-1918)
{\color[rgb]{0,0,0}\thinlines
\put(3376,-781){\circle*{10}}
}%
{\color[rgb]{0,0,0}\put(3421,-781){\circle*{10}}
}%
{\color[rgb]{0,0,0}\put(3466,-781){\circle*{10}}
}%
{\color[rgb]{0,0,0}\put(3106,-1771){\circle*{10}}
}%
{\color[rgb]{0,0,0}\put(3151,-1771){\circle*{10}}
}%
{\color[rgb]{0,0,0}\put(3196,-1771){\circle*{10}}
}%
{\color[rgb]{0,0,0}\put(3196,-1321){\circle*{10}}
}%
{\color[rgb]{0,0,0}\put(3241,-1321){\circle*{10}}
}%
{\color[rgb]{0,0,0}\put(3286,-1321){\circle*{10}}
}%
{\color[rgb]{0,0,0}\put(4816,-781){\circle*{10}}
}%
{\color[rgb]{0,0,0}\put(4861,-781){\circle*{10}}
}%
{\color[rgb]{0,0,0}\put(4906,-781){\circle*{10}}
}%
{\color[rgb]{0,0,0}\put(4816,-1591){\circle*{10}}
}%
{\color[rgb]{0,0,0}\put(4861,-1591){\circle*{10}}
}%
{\color[rgb]{0,0,0}\put(4906,-1591){\circle*{10}}
}%
{\color[rgb]{0,0,0}\put(4771,-1231){\circle*{10}}
}%
{\color[rgb]{0,0,0}\put(4816,-1231){\circle*{10}}
}%
{\color[rgb]{0,0,0}\put(4861,-1231){\circle*{10}}
}%
{\color[rgb]{0,0,0}\put(3061,-871){\oval(180,180)[tr]}
\put(3061,-871){\oval(180,180)[tl]}
}%
{\color[rgb]{0,0,0}\put(3421,-1501){\oval(180,180)[tr]}
\put(3421,-1501){\oval(180,180)[tl]}
}%
{\color[rgb]{0,0,0}\put(3061,-1141){\oval(180,180)[bl]}
\put(3061,-1141){\oval(180,180)[br]}
}%
{\color[rgb]{0,0,0}\put(3601,-1681){\oval(180,180)[bl]}
\put(3601,-1681){\oval(180,180)[br]}
}%
{\color[rgb]{0,0,0}\put(4411,-871){\oval(180,180)[tr]}
\put(4411,-871){\oval(180,180)[tl]}
}%
{\color[rgb]{0,0,0}\put(4411,-1321){\oval(180,180)[bl]}
\put(4411,-1321){\oval(180,180)[br]}
}%
{\color[rgb]{0,0,0}\put(3601,-871){\line( 0, 1){ 90}}
}%
{\color[rgb]{0,0,0}\put(3241,-871){\line( 0, 1){180}}
}%
{\color[rgb]{0,0,0}\multiput(3241,-691)(-128.57143,0.00000){4}{\line(-1, 0){ 64.286}}
\multiput(2791,-691)(0.00000,-111.42857){11}{\line( 0,-1){ 55.714}}
\multiput(2791,-1861)(120.00000,0.00000){2}{\line( 1, 0){ 60.000}}
\put(2971,-1861){\line( 0, 1){ 90}}
}%
{\color[rgb]{0,0,0}\multiput(3601,-781)(0.00000,90.00000){2}{\line( 0, 1){ 45.000}}
\multiput(3601,-646)(-114.00000,0.00000){8}{\line(-1, 0){ 57.000}}
\multiput(2746,-646)(0.00000,-109.56522){12}{\line( 0,-1){ 54.783}}
\multiput(2746,-1906)(106.36364,0.00000){6}{\line( 1, 0){ 53.182}}
\multiput(3331,-1906)(0.00000,90.00000){2}{\line( 0, 1){ 45.000}}
}%
{\color[rgb]{0,0,0}\put(3511,-1141){\line( 0,-1){ 90}}
}%
{\color[rgb]{0,0,0}\put(3241,-1141){\line( 0,-1){ 90}}
}%
{\color[rgb]{0,0,0}\put(3691,-1681){\line( 0, 1){540}}
}%
{\color[rgb]{0,0,0}\put(3061,-1141){\framebox(720,270){}}
}%
{\color[rgb]{0,0,0}\put(2971,-871){\line( 0,-1){270}}
}%
{\color[rgb]{0,0,0}\put(3331,-1681){\line( 0,-1){ 90}}
}%
{\color[rgb]{0,0,0}\put(2971,-1681){\line( 0,-1){ 90}}
}%
{\color[rgb]{0,0,0}\put(3511,-1231){\line(-3,-2){270}}
\put(3241,-1411){\line( 0,-1){ 90}}
}%
{\color[rgb]{0,0,0}\put(3511,-1501){\line( 0,-1){180}}
}%
{\color[rgb]{0,0,0}\put(3241,-1231){\line(-3,-2){270}}
\put(2971,-1411){\line( 0,-1){ 90}}
}%
{\color[rgb]{0,0,0}\put(4681,-781){\line( 0, 1){ 90}}
\multiput(4681,-691)(-110.00000,0.00000){5}{\line(-1, 0){ 55.000}}
\multiput(4186,-691)(0.00000,-116.47059){9}{\line( 0,-1){ 58.235}}
\multiput(4186,-1681)(110.00000,0.00000){5}{\line( 1, 0){ 55.000}}
\put(4681,-1681){\line( 0, 1){ 90}}
}%
{\color[rgb]{0,0,0}\multiput(5041,-781)(0.00000,90.00000){2}{\line( 0, 1){ 45.000}}
\multiput(5041,-646)(-120.00000,0.00000){8}{\line(-1, 0){ 60.000}}
\multiput(4141,-646)(0.00000,-113.68421){10}{\line( 0,-1){ 56.842}}
\multiput(4141,-1726)(120.00000,0.00000){8}{\line( 1, 0){ 60.000}}
\multiput(5041,-1726)(0.00000,90.00000){2}{\line( 0, 1){ 45.000}}
}%
{\color[rgb]{0,0,0}\put(4321,-871){\line( 0,-1){450}}
}%
{\color[rgb]{0,0,0}\put(4591,-1501){\framebox(540,180){}}
}%
{\color[rgb]{0,0,0}\put(4681,-1141){\line( 0,-1){180}}
}%
{\color[rgb]{0,0,0}\put(5041,-1141){\line( 0,-1){180}}
}%
{\color[rgb]{0,0,0}\put(5041,-1501){\line( 0,-1){ 90}}
}%
{\color[rgb]{0,0,0}\put(4681,-1501){\line( 0,-1){ 90}}
}%
{\color[rgb]{0,0,0}\put(4501,-1141){\line( 0,-1){180}}
}%
{\color[rgb]{0,0,0}\put(5041,-871){\line( 0, 1){ 90}}
}%
{\color[rgb]{0,0,0}\put(4681,-871){\line( 0, 1){ 90}}
}%
{\color[rgb]{0,0,0}\put(4951,-1321){\line( 0, 1){180}}
}%
{\color[rgb]{0,0,0}\put(2881,-1681){\framebox(540,180){}}
}%
{\color[rgb]{0,0,0}\put(4411,-1141){\framebox(720,270){}}
}%
\put(4861,-1434){\makebox(0,0)[b]{\smash{{\SetFigFont{12}{14.4}{\rmdefault}{\mddefault}{\updefault}{\color[rgb]{0,0,0}$p_{k-1}$}%
}}}}
\put(4771,-1051){\makebox(0,0)[b]{\smash{{\SetFigFont{12}{14.4}{\rmdefault}{\mddefault}{\updefault}{\color[rgb]{0,0,0}$f$}%
}}}}
\put(3421,-1051){\makebox(0,0)[b]{\smash{{\SetFigFont{12}{14.4}{\rmdefault}{\mddefault}{\updefault}{\color[rgb]{0,0,0}$f$}%
}}}}
\put(3961,-1321){\makebox(0,0)[b]{\smash{{\SetFigFont{12}{14.4}{\rmdefault}{\mddefault}{\updefault}{\color[rgb]{0,0,0}$=$}%
}}}}
\put(3151,-1614){\makebox(0,0)[b]{\smash{{\SetFigFont{12}{14.4}{\rmdefault}{\mddefault}{\updefault}{\color[rgb]{0,0,0}$p_{k-1}$}%
}}}}
\put(5311,-1321){\makebox(0,0)[lb]{\smash{{\SetFigFont{12}{14.4}{\rmdefault}{\mddefault}{\updefault}{\color[rgb]{0,0,0}$=\Tr(f)$.}%
}}}}
\end{picture}%

  \end{equation*}
  For the Hilbert-Schmidt norm, we have
  \begin{align*}
    \Tr(\rho(f)^{*}\rho(f)) & = \Tr\left(t_{1}^{*}\otimes P_{k})(\id_{1}\otimes f^{*}\otimes\id_{1})(P_{k}\otimes t_{1}t_{1}^{*})(\id_{1}\otimes f\otimes\id_{1})(t_{1}\otimes P_{k}\right) \\
    & \leqslant \Tr\left((t_{1}^{*}\otimes \id_{1}^{\otimes k})(\id_{1}\otimes f^{*}\otimes\id_{1})(\id_{1}^{\otimes k}\otimes t_{1}t_{1}^{*})(\id_{1}\otimes f\otimes\id_{1})(t_{1}\otimes \id_{1}^{\otimes k})\right) \\
    & = \Tr \left((t_{1}^{*}\otimes \id_{1}^{\otimes k-1}\otimes t_{1}^{*})(\id_{1}\otimes f^{*}\otimes\id_{1}^{\otimes 2})(\id_{1}^{\otimes k}\otimes t_{1}t_{1}^{*}\otimes\id_{1})(\id_{1}\otimes f\otimes\id_{1}^{\otimes 2})(t_{1}\otimes \id_{1}^{\otimes k-1}\otimes t_{1})\right) \\
    & = \Tr\left((t_{1}^{*}\otimes \id_{1}^{\otimes k-1})(\id_{1}\otimes f^{*})(\id_{1}\otimes f) (t_{1}\otimes \id_{1}^{\otimes k-1})\right) \\
    & = \Tr(f^{*}f).
  \end{align*}
\end{proof}

\begin{prop}\label{prop:partialtrace}
  Assume that $N > 2$. Let $a, b, c\in \N$ and consider the operator
  \begin{equation*}
    x_{a, b, c} = (\id_{a}\otimes \tr_{b}\otimes \id_{c})(P_{a+b+c}) : H_{a}\otimes H_{c} \rightarrow H_{a}\otimes H_{c}.
  \end{equation*}
  Then, there exist two constants $\lambda_{a, c} > 0$ and $D_{a, c} > 0$
  depending only on $N$, $a$ and $c$ such that
  \begin{equation*}
    \|x_{a, b, c} - \lambda_{a, c} (\id_{a}\otimes\id_{c})\| \leqslant D_{a, c}q^{b}.
  \end{equation*}
  In particular $x_{a, b, c} \to \lambda_{a, c} (\id_{a}\otimes\id_{c})$ as
  $b\to\infty$.
\end{prop}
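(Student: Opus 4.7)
The candidate constant $\lambda_{a,c}$ is identified by a trace computation:
\begin{equation*}
  \Tr(x_{a,b,c}) = \frac{1}{d_b}\Tr_{H_a\otimes H_b\otimes H_c}(P_{a+b+c}) = \frac{d_{a+b+c}}{d_b},
\end{equation*}
which converges to $q^{-(a+c)}$ as $b\to\infty$ by the explicit formula for $d_n$, so the natural candidate is $\lambda_{a,c} := q^{-(a+c)}/(d_a d_c)$, giving $\Tr(\lambda_{a,c}(\id_a\otimes\id_c)) = q^{-(a+c)}$. An elementary estimate even yields $|\Tr(x_{a,b,c}-\lambda_{a,c}(\id_a\otimes\id_c))| = O(q^{2b})$, but the real task is to promote this trace estimate to an operator norm bound; this is non-trivial because $x_{a,b,c}$ need not itself be an intertwiner of $u^a\otimes u^c$ in the quantum setting (for instance a direct computation via \eqref{eq:wenzlrecursionreduced} shows $x_{1,1,1}$ has a nonzero component along the flip, which is not an intertwiner for $O_N^+$).

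The plan is to proceed by induction on $a+c$. The base case $a=c=0$ is immediate: $x_{0,b,0}=\tr_b(P_b)=1=\lambda_{0,0}$. For the inductive step, assume $c\geqslant 1$ (the case $a\geqslant 1$ is symmetric via \eqref{eq:wenzlrecursionreflected}). Apply the reduced Wenzl relation \eqref{eq:wenzlrecursionreduced} to $P_{a+b+c}$ and then take the partial trace $(\id_a\otimes\tr_b\otimes\id_c)$. The first summand $(P_{a+b+c-1}\otimes\id_1)$ contributes $x_{a,b,c-1}\otimes\id_1$, which by the induction hypothesis is within $D_{a,c-1}q^b$ of $\lambda_{a,c-1}(\id_a\otimes\id_{c-1})\otimes\id_1$ in operator norm.

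The main obstacle is to control the correction term: its prefactor $d_{a+b+c-2}/d_{a+b+c-1}$ is only uniformly bounded in $b$, so the $q^b$ decay must come from the partial trace itself. This is where Lemma \ref{lem:tracezero} is essential: using the absorption properties of the Jones--Wenzl projections to rearrange the inserted $t_1t_1^*$ and to re-identify the resulting closure on the middle block as a rotated diagram, the correction can be written as an iterated application of the rotation $\rho$ to a bounded auxiliary operator in $\B(H_k)$ for some $k$ depending on $a+c$. The HS-contractivity $\|\rho(f)\|_{\HS}\leqslant\|f\|_{\HS}$ keeps the iterates bounded in Hilbert--Schmidt norm, while the trace-shrinking identity $|\Tr(\rho(f))|=|\Tr(f)|/d_{k-1}$ injects a factor of order $q$ per step, yielding $O(q^b)$ after $b$ iterations. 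Since the correction takes values in a finite-dimensional subspace of $\B(H_a\otimes H_c)$ whose dimension is controlled by $a$, $c$, and $N$, the combined HS-plus-trace control converts into an operator norm bound of the form $K_{a,c}q^b$. Combining the leading term with the correction, matching with the trace identity $\Tr(x_{a,b,c})=d_{a+b+c}/d_b$ to fix the candidate $\lambda_{a,c}$, and choosing $D_{a,c}$ to absorb all constants accumulated through the induction then completes the proof.
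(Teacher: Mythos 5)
Your candidate $\lambda_{a,c}=q^{-(a+c)}/(d_ad_c)$ is exactly the one the paper uses, and the ingredients you name (Wenzl recursion, the rotation $\rho$ of Lemma~\ref{lem:tracezero}, an induction) are the right ones, but the central analytic step is not correct as described. The mechanism you propose for the $q^{b}$ decay of the correction term controls traces, not norms: knowing $\|\rho(f)\|_{\HS}\leqslant\|f\|_{\HS}$ together with $\vert\Tr(\rho(f))\vert=\vert\Tr(f)\vert/d_{k-1}$ only tells you that the \emph{trace} of the iterates shrinks, while their Hilbert--Schmidt norm may remain of size $1$ at every step; no operator-norm bound on the iterate follows. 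In the paper the trace identity of Lemma~\ref{lem:tracezero} plays the opposite role --- it guarantees that the auxiliary operators $f^{\sharp}=(\id_a\otimes\rho)(f)$ stay \emph{traceless} so that the induction hypothesis applies to them --- and the geometric decay per iteration step comes from the Wenzl coefficient $d_{a+b-1}/d_{a+b+c-1}\approx q^{c}$ attached to the surviving insertion term, not from the trace of $\rho(f)$. Moreover your claim that the correction term of \eqref{eq:wenzlrecursionreduced} is $O(q^{b})$ is inconsistent with your own trace normalization: since the leading term converges to $\lambda_{a,c-1}\,\id$ after compression, the correction must converge to $(\lambda_{a,c}-\lambda_{a,c-1})\id$, and this is a \emph{nonzero} scalar (e.g. $\lambda_{a,1}=q^{-a-1}/(d_aN)\neq q^{-a}/d_a=\lambda_{a,0}$); taken literally, your induction would force $\lambda_{a,c}=\lambda_{a,c-1}$. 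A further technical obstruction is that in the reduced relation the inserted $t_1t_1^*$ sits on the last two legs sandwiched between copies of $P_{n-1}\otimes\id_1$, so it does not interact cleanly with the block decomposition $H_a\otimes H_b\otimes H_c$; the paper uses the full recursion \eqref{eq:wenzlrecursion} precisely because compression by $P_a\otimes P_b\otimes P_c$ kills all insertion positions except $l=a$ and $l=a+b$, and it is the $l=a+b$ term that becomes $\rho$ applied to the $H_c$-component.

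The route that actually works is dual and differently normalized: set $X_{a,b,c}=d_b\,x_{a,b,c}$ and $X'=X_{a,b,c}-d_b\lambda_{a,c}\id$, and prove the \emph{uniform-in-$b$} bound $\vert(\Tr_a\otimes\Tr_c)(X'f)\vert\leqslant D_{a,c}\|f\|_{\HS}$ for all $f$, by reducing to traceless $f$ and running an induction on $c$ combined with an iteration over $b$: the leading Wenzl term lowers $c$, the $l=a$ term is bounded crudely, and the $l=a+b$ term equals $\frac{d_{a+b-1}}{d_{a+b+c-1}}\Tr(X_{a,b-1,c}f^{\sharp})$, whose prefactor $\approx q^{c}$ makes the $b$-iteration summable via $\sum_l q^{lc}<\infty$ (this is where $N>2$, i.e. $q<1$, enters). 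The $q^{b}$ in the statement then comes entirely from dividing by $d_b\sim q^{-b}$ at the end, not from the correction terms themselves being small. To salvage your outline you would need to replace the "factor of $q$ per rotation" mechanism by this duality-plus-normalization argument.
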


\begin{proof}
  For convenience, the proof will be done with the non-normalized trace, and
  hence we consider the non-normalized operator $X_{a, b, c} =
  (\id_{a}\otimes\Tr_{b}\otimes \id_{c})(P_{a+b+c}) = d_{b}x_{a, b, c}$. We
  first observe that
  \begin{equation*}
    (\Tr_{a}\otimes \Tr_{c})(X_{a, b, c}) = \Tr(P_{a+b+c}) = d_{a+b+c} = d_{b}q^{-a-c} + O(q^b)
  \end{equation*}
  and accordingly set
  \begin{equation*}
    \lambda_{a, c} = q^{-a-c}/d_{a}d_{c} \text{ and } X'_{a, b, c} = X_{a, b, c} - d_{b} \lambda_{a, c} (\id_{a}\otimes\id_{c}).
  \end{equation*}
  With this notation, we have $\Tr(X'_{a, b, c}) = O(q^{b})$ and we want to show
  that $\|X'_{a, b, c}\| \leqslant D_{a,c}$. We will prove that
  \begin{equation*}
    \vert (\Tr_a\otimes\Tr_c)(X'_{a,b,c}f) \vert\leqslant D_{a,c} \|f\|_{\HS}
  \end{equation*}
  for any $f \in \B(H_{a}\otimes H_{c})$. Moreover any such $f$ can be
  decomposed into a multiple of the identity and a map with zero trace, and
  since the estimate is satisfied for $f = \id$ by our choice of $\lambda_{a,
    c}$ we can assume $(\Tr_{a}\otimes\Tr_{c})(f) = 0$. Eventually, we note that
  in this case $(\Tr_{a}\otimes\Tr_{c})(X'_{a, b, c}f) =
  (\Tr_{a}\otimes\Tr_{c})(X_{a, b, c}f)$.

  Now we observe that $(\Tr_{a}\otimes \Tr_{c})(X_{a, b, c}f) =
  \Tr(P_{a+b+c}f_{13})$ where $\Tr$ is the trace of $H_1^{\otimes(a+b+c)}$, and
  we use Wenzl's formula \eqref{eq:wenzlrecursion} to write
  \begin{align*}
    \Tr(X_{a, b, c}f) = & \Tr((P_{a+b+c-1}\otimes \id_{1})f_{13}) \\
    & + \sum_{l=1}^{a+b+c-1}(-1)^{a+b+c-l}\frac{d_{l-1}}{d_{a+b+c-1}}
    \Tr\left((\id_{1}^{\otimes (l-1)}\otimes t_{1}\otimes
      \id_{1}^{\otimes(a+b+c-l-1)}\otimes t_{1}^{*})(P_{a+b+c-1}\otimes
      \id_{1})f_{13}\right).
  \end{align*}
  Moreover, one can factor $P_{a}\otimes P_{b}\otimes P_{c}$ out of the right
  side of $(P_{a+b+c-1}\otimes \id_{1})f_{13}$. Since $P_{k}(\id\otimes
  t_{1}\otimes\id) = 0$ on $H_{1}^{\otimes(k-2)}$, we see that $(P_{a}\otimes
  P_{b}\otimes P_{c})(\id_{1}^{\otimes (l-1)}\otimes t_{1}\otimes
  \id_{1}^{\otimes(a+b+c-l-1)}) = 0$ if $l\neq a$ and $l\neq a+b$. Hence there
  are only three terms to bound in the expression above.

  The first term is equal to
  \begin{equation*}
    \Tr((P_{a+b+c-1}\otimes \id_{1})f_{13}) = \Tr(X_{a, b, c-1}f^{\flat}),
  \end{equation*}
  where $f^{\flat} = (\id_{a}\otimes \id_{c-1}\otimes \Tr_{1})(f)$ satisfies
  $\Tr(f^{\flat}) = 0$ and
  $\|f^{\flat}\|_{\HS}\leqslant\sqrt{d_{1}}\|f\|_\HS$. For $l=a$, we use the
  trivial bound
  \begin{equation*}
    \frac{d_{a-1}}{d_{a+b+c-1}} \times \|f_{13}\|_{\HS} \times\|t_{1}\|^{2} \times \|P_{a+b+c-1}\otimes\id_{1}\|_{\HS} = \frac{d_{1}^{3/2}d_{a-1}\sqrt{d_{b}}}{\sqrt{d_{a+b+c-1}}} \|f\|_{\HS}.
  \end{equation*}
  For $l=a+b$, if we denote the term we are interested in by $Y$, we have, with
  $f = \sum f_{(1)}\otimes f_{(2)}\in \B(H_{a})\otimes \B(H_{c})$,
  \begin{align*}
    Y & = \Tr\left( (\id_{1}^{\otimes (a+b-1)}\otimes
      t_{1}\otimes\id_{1}^{\otimes (c-1)}\otimes t_{1}^{*}) (P_{a+b+c-1}\otimes
      \id_{1})f_{13}
    \right) \\
    & = \Tr\left( (\id_{1}^{\otimes (a+b+c-2)}\otimes
      t_{1}^{*})(P_{a+b+c-1}\otimes \id_{1}) f_{13}(\id_{1}^{\otimes
        (a+b-1)}\otimes t_{1}\otimes \id_{1}^{\otimes(c-1)})
    \right) \\
    & = \Tr\left((\id_{1}^{\otimes (a+b+c-2)}\otimes
      t_{1}^{*})(P_{a+b+c-1}\otimes (t_{1}^{*}\otimes \id_{1})(\id_{1}\otimes
      t_{1}))f_{13}(\id_{1}^{\otimes (a+b-1)}
      \otimes t_{1}\otimes \id_{1}^{\otimes(c-1)})\right) \\
    & = \Tr\left( (\id_{1}^{\otimes (a+b+c-2)}\otimes
      t_{1}^{*})([(P_{a+b+c-1}\otimes t_{1}^{*})
      (f_{13}\otimes\id_{1})(\id_{1}^{(a+b-1)}\otimes t_{1}\otimes
      \id_{1}^{\otimes c})]\otimes \id_{1})(\id_{1}^{\otimes (a+b+c-2)}\otimes
      t_{1})
    \right)\\
    & = \sum\Tr\left( (P_{a+b+c-1}\otimes t_{1}^{*})(f_{(1)}\otimes
      \id_{b-1}\otimes f_{(2)}\otimes \id_{1}) (\id_{1}^{\otimes (a+b-1)}\otimes
      t_{1}\otimes \id_{1}^{\otimes c})
    \right) \\
    & = \Tr(P_{a+(b-1)+c} f_{13}^{\sharp}) = \Tr(X_{a, b-1, c}f^{\sharp})
  \end{align*}
  where $f^{\sharp} = (\id_{a}\otimes \rho)(f)$ satisfies $\Tr(f^{\sharp}) = 0$
  and $\|f^{\sharp}\|_{\HS}\leqslant \|f\|_{\HS}$ by Lemma
  \ref{lem:tracezero}. Here is the diagrammatic version of the previous
  computation,
  \begin{equation*}
    \setlength{\unitlength}{4144sp}%
\begingroup\makeatletter\ifx\SetFigFont\undefined%
\gdef\SetFigFont#1#2#3#4#5{%
  \reset@font\fontsize{#1}{#2pt}%
  \fontfamily{#3}\fontseries{#4}\fontshape{#5}%
  \selectfont}%
\fi\endgroup%
\begin{picture}(4344,2724)(1834,-3988)
{\color[rgb]{0,0,0}\thinlines
\put(3061,-2221){\circle*{10}}
}%
{\color[rgb]{0,0,0}\put(3106,-2221){\circle*{10}}
}%
{\color[rgb]{0,0,0}\put(3016,-2221){\circle*{10}}
}%
{\color[rgb]{0,0,0}\put(2521,-2221){\circle*{10}}
}%
{\color[rgb]{0,0,0}\put(2566,-2221){\circle*{10}}
}%
{\color[rgb]{0,0,0}\put(2476,-2221){\circle*{10}}
}%
{\color[rgb]{0,0,0}\put(2251,-2221){\circle*{10}}
}%
{\color[rgb]{0,0,0}\put(2296,-2221){\circle*{10}}
}%
{\color[rgb]{0,0,0}\put(2206,-2221){\circle*{10}}
}%
{\color[rgb]{0,0,0}\put(2251,-1906){\circle*{10}}
}%
{\color[rgb]{0,0,0}\put(2296,-1906){\circle*{10}}
}%
{\color[rgb]{0,0,0}\put(2206,-1906){\circle*{10}}
}%
{\color[rgb]{0,0,0}\put(2251,-1546){\circle*{10}}
}%
{\color[rgb]{0,0,0}\put(2296,-1546){\circle*{10}}
}%
{\color[rgb]{0,0,0}\put(2206,-1546){\circle*{10}}
}%
{\color[rgb]{0,0,0}\put(2656,-1726){\circle*{10}}
}%
{\color[rgb]{0,0,0}\put(2701,-1726){\circle*{10}}
}%
{\color[rgb]{0,0,0}\put(2611,-1726){\circle*{10}}
}%
{\color[rgb]{0,0,0}\put(3151,-1906){\circle*{10}}
}%
{\color[rgb]{0,0,0}\put(3196,-1906){\circle*{10}}
}%
{\color[rgb]{0,0,0}\put(3106,-1906){\circle*{10}}
}%
{\color[rgb]{0,0,0}\put(3286,-1546){\circle*{10}}
}%
{\color[rgb]{0,0,0}\put(3331,-1546){\circle*{10}}
}%
{\color[rgb]{0,0,0}\put(3241,-1546){\circle*{10}}
}%
{\color[rgb]{0,0,0}\put(2071,-2131){\framebox(1260,180){}}
}%
{\color[rgb]{0,0,0}\put(2071,-1861){\framebox(360,270){}}
}%
{\color[rgb]{0,0,0}\put(2971,-1861){\framebox(540,270){}}
}%
{\color[rgb]{0,0,0}\put(5446,-1906){\circle*{10}}
}%
{\color[rgb]{0,0,0}\put(5491,-1906){\circle*{10}}
}%
{\color[rgb]{0,0,0}\put(5401,-1906){\circle*{10}}
}%
{\color[rgb]{0,0,0}\put(5581,-1546){\circle*{10}}
}%
{\color[rgb]{0,0,0}\put(5626,-1546){\circle*{10}}
}%
{\color[rgb]{0,0,0}\put(5536,-1546){\circle*{10}}
}%
{\color[rgb]{0,0,0}\put(5266,-1861){\framebox(540,270){}}
}%
{\color[rgb]{0,0,0}\put(5311,-2221){\circle*{10}}
}%
{\color[rgb]{0,0,0}\put(5356,-2221){\circle*{10}}
}%
{\color[rgb]{0,0,0}\put(5266,-2221){\circle*{10}}
}%
{\color[rgb]{0,0,0}\put(4996,-1726){\circle*{10}}
}%
{\color[rgb]{0,0,0}\put(5041,-1726){\circle*{10}}
}%
{\color[rgb]{0,0,0}\put(4951,-1726){\circle*{10}}
}%
{\color[rgb]{0,0,0}\put(4681,-1546){\circle*{10}}
}%
{\color[rgb]{0,0,0}\put(4726,-1546){\circle*{10}}
}%
{\color[rgb]{0,0,0}\put(4636,-1546){\circle*{10}}
}%
{\color[rgb]{0,0,0}\put(4681,-1906){\circle*{10}}
}%
{\color[rgb]{0,0,0}\put(4726,-1906){\circle*{10}}
}%
{\color[rgb]{0,0,0}\put(4636,-1906){\circle*{10}}
}%
{\color[rgb]{0,0,0}\put(4996,-2221){\circle*{10}}
}%
{\color[rgb]{0,0,0}\put(5041,-2221){\circle*{10}}
}%
{\color[rgb]{0,0,0}\put(4951,-2221){\circle*{10}}
}%
{\color[rgb]{0,0,0}\put(4681,-2221){\circle*{10}}
}%
{\color[rgb]{0,0,0}\put(4726,-2221){\circle*{10}}
}%
{\color[rgb]{0,0,0}\put(4636,-2221){\circle*{10}}
}%
{\color[rgb]{0,0,0}\put(5446,-3346){\circle*{10}}
}%
{\color[rgb]{0,0,0}\put(5491,-3346){\circle*{10}}
}%
{\color[rgb]{0,0,0}\put(5401,-3346){\circle*{10}}
}%
{\color[rgb]{0,0,0}\put(5581,-2986){\circle*{10}}
}%
{\color[rgb]{0,0,0}\put(5626,-2986){\circle*{10}}
}%
{\color[rgb]{0,0,0}\put(5536,-2986){\circle*{10}}
}%
{\color[rgb]{0,0,0}\put(5266,-3301){\framebox(540,270){}}
}%
{\color[rgb]{0,0,0}\put(5311,-3661){\circle*{10}}
}%
{\color[rgb]{0,0,0}\put(5356,-3661){\circle*{10}}
}%
{\color[rgb]{0,0,0}\put(5266,-3661){\circle*{10}}
}%
{\color[rgb]{0,0,0}\put(4996,-3166){\circle*{10}}
}%
{\color[rgb]{0,0,0}\put(5041,-3166){\circle*{10}}
}%
{\color[rgb]{0,0,0}\put(4951,-3166){\circle*{10}}
}%
{\color[rgb]{0,0,0}\put(4681,-2986){\circle*{10}}
}%
{\color[rgb]{0,0,0}\put(4726,-2986){\circle*{10}}
}%
{\color[rgb]{0,0,0}\put(4636,-2986){\circle*{10}}
}%
{\color[rgb]{0,0,0}\put(4681,-3346){\circle*{10}}
}%
{\color[rgb]{0,0,0}\put(4726,-3346){\circle*{10}}
}%
{\color[rgb]{0,0,0}\put(4636,-3346){\circle*{10}}
}%
{\color[rgb]{0,0,0}\put(4996,-3661){\circle*{10}}
}%
{\color[rgb]{0,0,0}\put(5041,-3661){\circle*{10}}
}%
{\color[rgb]{0,0,0}\put(4951,-3661){\circle*{10}}
}%
{\color[rgb]{0,0,0}\put(4681,-3661){\circle*{10}}
}%
{\color[rgb]{0,0,0}\put(4726,-3661){\circle*{10}}
}%
{\color[rgb]{0,0,0}\put(4636,-3661){\circle*{10}}
}%
{\color[rgb]{0,0,0}\put(3331,-2131){\oval(180,180)[bl]}
\put(3331,-2131){\oval(180,180)[br]}
}%
{\color[rgb]{0,0,0}\put(2791,-2311){\oval(180,180)[tr]}
\put(2791,-2311){\oval(180,180)[tl]}
}%
{\color[rgb]{0,0,0}\put(5626,-2131){\oval(180,180)[bl]}
\put(5626,-2131){\oval(180,180)[br]}
}%
{\color[rgb]{0,0,0}\put(5266,-1501){\oval(180,180)[tr]}
\put(5266,-1501){\oval(180,180)[tl]}
}%
{\color[rgb]{0,0,0}\put(5266,-2941){\oval(180,180)[tr]}
\put(5266,-2941){\oval(180,180)[tl]}
}%
{\color[rgb]{0,0,0}\put(5806,-3301){\oval(180,180)[bl]}
\put(5806,-3301){\oval(180,180)[br]}
}%
{\color[rgb]{0,0,0}\put(3421,-1861){\line( 0,-1){270}}
}%
{\color[rgb]{0,0,0}\put(2611,-2131){\line( 0,-1){180}}
}%
{\color[rgb]{0,0,0}\put(3241,-1861){\line( 0,-1){ 90}}
}%
{\color[rgb]{0,0,0}\put(3421,-1591){\line( 0, 1){ 90}}
}%
{\color[rgb]{0,0,0}\put(3061,-1591){\line( 0, 1){ 90}}
}%
{\color[rgb]{0,0,0}\put(3151,-1591){\line( 0, 1){ 90}}
}%
{\color[rgb]{0,0,0}\put(2341,-1591){\line( 0, 1){ 90}}
}%
{\color[rgb]{0,0,0}\put(2881,-1951){\line( 0, 1){450}}
}%
{\color[rgb]{0,0,0}\put(2341,-1861){\line( 0,-1){ 90}}
}%
{\color[rgb]{0,0,0}\put(2161,-2131){\line( 0,-1){180}}
}%
{\color[rgb]{0,0,0}\put(2521,-1951){\line( 0, 1){450}}
}%
{\color[rgb]{0,0,0}\put(2161,-1591){\line( 0, 1){ 90}}
}%
{\color[rgb]{0,0,0}\put(2161,-1861){\line( 0,-1){ 90}}
}%
{\color[rgb]{0,0,0}\put(3061,-1951){\line( 0, 1){ 90}}
}%
{\color[rgb]{0,0,0}\multiput(3241,-2311)(-3.60000,7.20000){26}{\makebox(1.5875,11.1125){\tiny.}}
}%
{\color[rgb]{0,0,0}\multiput(2971,-2311)(-3.60000,7.20000){26}{\makebox(1.5875,11.1125){\tiny.}}
}%
{\color[rgb]{0,0,0}\put(2791,-1501){\line( 0,-1){450}}
}%
{\color[rgb]{0,0,0}\multiput(2791,-1501)(0.00000,90.00000){3}{\line( 0, 1){ 45.000}}
\multiput(2791,-1276)(-82.17391,0.00000){12}{\line(-1, 0){ 41.087}}
\multiput(1846,-1276)(0.00000,-86.89655){15}{\line( 0,-1){ 43.448}}
\multiput(1846,-2536)(90.00000,0.00000){9}{\line( 1, 0){ 45.000}}
\multiput(2611,-2536)(0.00000,90.00000){3}{\line( 0, 1){ 45.000}}
}%
{\color[rgb]{0,0,0}\multiput(2521,-1501)(0.00000,72.00000){3}{\line( 0, 1){ 36.000}}
\multiput(2521,-1321)(-84.00000,0.00000){8}{\line(-1, 0){ 42.000}}
\multiput(1891,-1321)(0.00000,-86.66667){14}{\line( 0,-1){ 43.333}}
\multiput(1891,-2491)(83.07692,0.00000){7}{\line( 1, 0){ 41.538}}
\multiput(2431,-2491)(0.00000,72.00000){3}{\line( 0, 1){ 36.000}}
}%
{\color[rgb]{0,0,0}\multiput(2341,-1501)(0.00000,90.00000){2}{\line( 0, 1){ 45.000}}
\multiput(2341,-1366)(-90.00000,0.00000){5}{\line(-1, 0){ 45.000}}
\multiput(1936,-1366)(0.00000,-86.40000){13}{\line( 0,-1){ 43.200}}
\multiput(1936,-2446)(90.00000,0.00000){5}{\line( 1, 0){ 45.000}}
\multiput(2341,-2446)(0.00000,90.00000){2}{\line( 0, 1){ 45.000}}
}%
{\color[rgb]{0,0,0}\multiput(2161,-1501)(0.00000,60.00000){2}{\line( 0, 1){ 30.000}}
\multiput(2161,-1411)(-72.00000,0.00000){3}{\line(-1, 0){ 36.000}}
\multiput(1981,-1411)(0.00000,-86.08696){12}{\line( 0,-1){ 43.043}}
\multiput(1981,-2401)(72.00000,0.00000){3}{\line( 1, 0){ 36.000}}
\multiput(2161,-2401)(0.00000,60.00000){2}{\line( 0, 1){ 30.000}}
}%
{\color[rgb]{0,0,0}\multiput(2881,-1501)(0.00000,90.00000){3}{\line( 0, 1){ 45.000}}
\multiput(2881,-1276)(90.00000,0.00000){10}{\line( 1, 0){ 45.000}}
\multiput(3736,-1276)(0.00000,-86.89655){15}{\line( 0,-1){ 43.448}}
\multiput(3736,-2536)(-82.80000,0.00000){13}{\line(-1, 0){ 41.400}}
\multiput(2701,-2536)(0.00000,90.00000){3}{\line( 0, 1){ 45.000}}
}%
{\color[rgb]{0,0,0}\multiput(3061,-1501)(0.00000,72.00000){3}{\line( 0, 1){ 36.000}}
\multiput(3061,-1321)(84.00000,0.00000){8}{\line( 1, 0){ 42.000}}
\multiput(3691,-1321)(0.00000,-86.66667){14}{\line( 0,-1){ 43.333}}
\multiput(3691,-2491)(-85.26316,0.00000){10}{\line(-1, 0){ 42.632}}
\multiput(2881,-2491)(0.00000,72.00000){3}{\line( 0, 1){ 36.000}}
}%
{\color[rgb]{0,0,0}\multiput(3151,-1501)(0.00000,90.00000){2}{\line( 0, 1){ 45.000}}
\multiput(3151,-1366)(90.00000,0.00000){6}{\line( 1, 0){ 45.000}}
\multiput(3646,-1366)(0.00000,-86.40000){13}{\line( 0,-1){ 43.200}}
\multiput(3646,-2446)(-90.00000,0.00000){8}{\line(-1, 0){ 45.000}}
\multiput(2971,-2446)(0.00000,90.00000){2}{\line( 0, 1){ 45.000}}
}%
{\color[rgb]{0,0,0}\multiput(3421,-1501)(0.00000,60.00000){2}{\line( 0, 1){ 30.000}}
\multiput(3421,-1411)(72.00000,0.00000){3}{\line( 1, 0){ 36.000}}
\multiput(3601,-1411)(0.00000,-86.08696){12}{\line( 0,-1){ 43.043}}
\multiput(3601,-2401)(-80.00000,0.00000){5}{\line(-1, 0){ 40.000}}
\multiput(3241,-2401)(0.00000,60.00000){2}{\line( 0, 1){ 30.000}}
}%
{\color[rgb]{0,0,0}\put(2431,-2311){\line( 0, 1){180}}
}%
{\color[rgb]{0,0,0}\put(2341,-2311){\line( 0, 1){180}}
}%
{\color[rgb]{0,0,0}\put(5716,-1861){\line( 0,-1){270}}
}%
{\color[rgb]{0,0,0}\put(5536,-1861){\line( 0,-1){ 90}}
}%
{\color[rgb]{0,0,0}\put(5716,-1591){\line( 0, 1){ 90}}
}%
{\color[rgb]{0,0,0}\put(5356,-1591){\line( 0, 1){ 90}}
}%
{\color[rgb]{0,0,0}\put(5446,-1591){\line( 0, 1){ 90}}
}%
{\color[rgb]{0,0,0}\put(5176,-1951){\line( 0, 1){450}}
}%
{\color[rgb]{0,0,0}\put(5356,-1951){\line( 0, 1){ 90}}
}%
{\color[rgb]{0,0,0}\put(5446,-2311){\line( 0, 1){180}}
}%
{\color[rgb]{0,0,0}\put(5176,-2311){\line( 0, 1){180}}
}%
{\color[rgb]{0,0,0}\put(5086,-1501){\line( 0,-1){450}}
}%
{\color[rgb]{0,0,0}\multiput(5446,-1501)(0.00000,90.00000){2}{\line( 0, 1){ 45.000}}
\multiput(5446,-1366)(90.00000,0.00000){6}{\line( 1, 0){ 45.000}}
\multiput(5941,-1366)(0.00000,-86.40000){13}{\line( 0,-1){ 43.200}}
\multiput(5941,-2446)(-90.00000,0.00000){9}{\line(-1, 0){ 45.000}}
\multiput(5176,-2446)(0.00000,90.00000){2}{\line( 0, 1){ 45.000}}
}%
{\color[rgb]{0,0,0}\multiput(5716,-1501)(0.00000,60.00000){2}{\line( 0, 1){ 30.000}}
\multiput(5716,-1411)(72.00000,0.00000){3}{\line( 1, 0){ 36.000}}
\multiput(5896,-1411)(0.00000,-86.08696){12}{\line( 0,-1){ 43.043}}
\multiput(5896,-2401)(-81.81818,0.00000){6}{\line(-1, 0){ 40.909}}
\multiput(5446,-2401)(0.00000,60.00000){2}{\line( 0, 1){ 30.000}}
}%
{\color[rgb]{0,0,0}\put(4906,-1951){\line( 0, 1){450}}
}%
{\color[rgb]{0,0,0}\put(4771,-1861){\line( 0,-1){ 90}}
}%
{\color[rgb]{0,0,0}\put(4771,-1591){\line( 0, 1){ 90}}
}%
{\color[rgb]{0,0,0}\put(4591,-1591){\line( 0, 1){ 90}}
}%
{\color[rgb]{0,0,0}\put(4591,-1861){\line( 0,-1){ 90}}
}%
{\color[rgb]{0,0,0}\put(4546,-1861){\framebox(270,270){}}
}%
{\color[rgb]{0,0,0}\put(4546,-2131){\framebox(1080,180){}}
}%
{\color[rgb]{0,0,0}\put(5086,-2131){\line( 0,-1){180}}
}%
{\color[rgb]{0,0,0}\put(4591,-2131){\line( 0,-1){180}}
}%
{\color[rgb]{0,0,0}\multiput(4771,-1501)(0.00000,90.00000){2}{\line( 0, 1){ 45.000}}
\multiput(4771,-1366)(-90.00000,0.00000){5}{\line(-1, 0){ 45.000}}
\multiput(4366,-1366)(0.00000,-86.40000){13}{\line( 0,-1){ 43.200}}
\multiput(4366,-2446)(90.00000,0.00000){5}{\line( 1, 0){ 45.000}}
\multiput(4771,-2446)(0.00000,90.00000){2}{\line( 0, 1){ 45.000}}
}%
{\color[rgb]{0,0,0}\multiput(4591,-1501)(0.00000,60.00000){2}{\line( 0, 1){ 30.000}}
\multiput(4591,-1411)(-72.00000,0.00000){3}{\line(-1, 0){ 36.000}}
\multiput(4411,-1411)(0.00000,-86.08696){12}{\line( 0,-1){ 43.043}}
\multiput(4411,-2401)(72.00000,0.00000){3}{\line( 1, 0){ 36.000}}
\multiput(4591,-2401)(0.00000,60.00000){2}{\line( 0, 1){ 30.000}}
}%
{\color[rgb]{0,0,0}\multiput(5086,-1501)(0.00000,90.00000){3}{\line( 0, 1){ 45.000}}
\multiput(5086,-1276)(-85.26316,0.00000){10}{\line(-1, 0){ 42.632}}
\multiput(4276,-1276)(0.00000,-86.89655){15}{\line( 0,-1){ 43.448}}
\multiput(4276,-2536)(85.26316,0.00000){10}{\line( 1, 0){ 42.632}}
\multiput(5086,-2536)(0.00000,72.00000){3}{\line( 0, 1){ 36.000}}
}%
{\color[rgb]{0,0,0}\put(4771,-2311){\line( 0, 1){180}}
}%
{\color[rgb]{0,0,0}\multiput(4906,-1501)(0.00000,72.00000){3}{\line( 0, 1){ 36.000}}
\multiput(4906,-1321)(-90.00000,0.00000){7}{\line(-1, 0){ 45.000}}
\multiput(4321,-1321)(0.00000,-86.66667){14}{\line( 0,-1){ 43.333}}
\multiput(4321,-2491)(90.00000,0.00000){7}{\line( 1, 0){ 45.000}}
\multiput(4906,-2491)(0.00000,72.00000){3}{\line( 0, 1){ 36.000}}
}%
{\color[rgb]{0,0,0}\put(4906,-2311){\line( 0, 1){180}}
}%
{\color[rgb]{0,0,0}\put(5536,-3301){\line( 0,-1){ 90}}
}%
{\color[rgb]{0,0,0}\put(5716,-3031){\line( 0, 1){ 90}}
}%
{\color[rgb]{0,0,0}\put(5356,-3031){\line( 0, 1){ 90}}
}%
{\color[rgb]{0,0,0}\put(5446,-3031){\line( 0, 1){ 90}}
}%
{\color[rgb]{0,0,0}\put(5176,-3391){\line( 0, 1){450}}
}%
{\color[rgb]{0,0,0}\put(5356,-3391){\line( 0, 1){ 90}}
}%
{\color[rgb]{0,0,0}\put(5446,-3751){\line( 0, 1){180}}
}%
{\color[rgb]{0,0,0}\put(5176,-3751){\line( 0, 1){180}}
}%
{\color[rgb]{0,0,0}\put(5086,-2941){\line( 0,-1){450}}
}%
{\color[rgb]{0,0,0}\multiput(5446,-2941)(0.00000,90.00000){2}{\line( 0, 1){ 45.000}}
\multiput(5446,-2806)(84.70588,0.00000){9}{\line( 1, 0){ 42.353}}
\multiput(6166,-2806)(0.00000,-86.40000){13}{\line( 0,-1){ 43.200}}
\multiput(6166,-3886)(-86.08696,0.00000){12}{\line(-1, 0){ 43.043}}
\multiput(5176,-3886)(0.00000,90.00000){2}{\line( 0, 1){ 45.000}}
}%
{\color[rgb]{0,0,0}\multiput(5716,-2941)(0.00000,60.00000){2}{\line( 0, 1){ 30.000}}
\multiput(5716,-2851)(90.00000,0.00000){5}{\line( 1, 0){ 45.000}}
\multiput(6121,-2851)(0.00000,-86.08696){12}{\line( 0,-1){ 43.043}}
\multiput(6121,-3841)(-90.00000,0.00000){8}{\line(-1, 0){ 45.000}}
\multiput(5446,-3841)(0.00000,60.00000){2}{\line( 0, 1){ 30.000}}
}%
{\color[rgb]{0,0,0}\put(4906,-3391){\line( 0, 1){450}}
}%
{\color[rgb]{0,0,0}\put(4771,-3301){\line( 0,-1){ 90}}
}%
{\color[rgb]{0,0,0}\put(4771,-3031){\line( 0, 1){ 90}}
}%
{\color[rgb]{0,0,0}\put(4591,-3031){\line( 0, 1){ 90}}
}%
{\color[rgb]{0,0,0}\put(4591,-3301){\line( 0,-1){ 90}}
}%
{\color[rgb]{0,0,0}\put(4546,-3301){\framebox(270,270){}}
}%
{\color[rgb]{0,0,0}\put(4546,-3571){\framebox(1080,180){}}
}%
{\color[rgb]{0,0,0}\put(5086,-3571){\line( 0,-1){180}}
}%
{\color[rgb]{0,0,0}\put(4591,-3571){\line( 0,-1){180}}
}%
{\color[rgb]{0,0,0}\multiput(4771,-2941)(0.00000,90.00000){2}{\line( 0, 1){ 45.000}}
\multiput(4771,-2806)(-90.00000,0.00000){5}{\line(-1, 0){ 45.000}}
\multiput(4366,-2806)(0.00000,-86.40000){13}{\line( 0,-1){ 43.200}}
\multiput(4366,-3886)(90.00000,0.00000){5}{\line( 1, 0){ 45.000}}
\multiput(4771,-3886)(0.00000,90.00000){2}{\line( 0, 1){ 45.000}}
}%
{\color[rgb]{0,0,0}\multiput(4591,-2941)(0.00000,60.00000){2}{\line( 0, 1){ 30.000}}
\multiput(4591,-2851)(-72.00000,0.00000){3}{\line(-1, 0){ 36.000}}
\multiput(4411,-2851)(0.00000,-86.08696){12}{\line( 0,-1){ 43.043}}
\multiput(4411,-3841)(72.00000,0.00000){3}{\line( 1, 0){ 36.000}}
\multiput(4591,-3841)(0.00000,60.00000){2}{\line( 0, 1){ 30.000}}
}%
{\color[rgb]{0,0,0}\multiput(5086,-2941)(0.00000,90.00000){3}{\line( 0, 1){ 45.000}}
\multiput(5086,-2716)(-85.26316,0.00000){10}{\line(-1, 0){ 42.632}}
\multiput(4276,-2716)(0.00000,-86.89655){15}{\line( 0,-1){ 43.448}}
\multiput(4276,-3976)(85.26316,0.00000){10}{\line( 1, 0){ 42.632}}
\multiput(5086,-3976)(0.00000,72.00000){3}{\line( 0, 1){ 36.000}}
}%
{\color[rgb]{0,0,0}\put(4771,-3751){\line( 0, 1){180}}
}%
{\color[rgb]{0,0,0}\multiput(4906,-2941)(0.00000,72.00000){3}{\line( 0, 1){ 36.000}}
\multiput(4906,-2761)(-90.00000,0.00000){7}{\line(-1, 0){ 45.000}}
\multiput(4321,-2761)(0.00000,-86.66667){14}{\line( 0,-1){ 43.333}}
\multiput(4321,-3931)(90.00000,0.00000){7}{\line( 1, 0){ 45.000}}
\multiput(4906,-3931)(0.00000,72.00000){3}{\line( 0, 1){ 36.000}}
}%
{\color[rgb]{0,0,0}\put(4906,-3751){\line( 0, 1){180}}
}%
{\color[rgb]{0,0,0}\put(5536,-3571){\line( 0,-1){180}}
\put(5536,-3751){\line( 1, 0){495}}
\put(6031,-3751){\line( 0, 1){810}}
\put(6031,-2941){\line(-1, 0){135}}
\put(5896,-2941){\line( 0,-1){360}}
}%
\put(2701,-2064){\makebox(0,0)[b]{\smash{{\SetFigFont{12}{14.4}{\rmdefault}{\mddefault}{\updefault}{\color[rgb]{0,0,0}$p_{a+b+c-1}$}%
}}}}
\put(2251,-1771){\makebox(0,0)[b]{\smash{{\SetFigFont{12}{14.4}{\rmdefault}{\mddefault}{\updefault}{\color[rgb]{0,0,0}$f_{(1)}$}%
}}}}
\put(3241,-1771){\makebox(0,0)[b]{\smash{{\SetFigFont{12}{14.4}{\rmdefault}{\mddefault}{\updefault}{\color[rgb]{0,0,0}$f_{(2)}$}%
}}}}
\put(5536,-1771){\makebox(0,0)[b]{\smash{{\SetFigFont{12}{14.4}{\rmdefault}{\mddefault}{\updefault}{\color[rgb]{0,0,0}$f_{(2)}$}%
}}}}
\put(5536,-3211){\makebox(0,0)[b]{\smash{{\SetFigFont{12}{14.4}{\rmdefault}{\mddefault}{\updefault}{\color[rgb]{0,0,0}$f_{(2)}$}%
}}}}
\put(4996,-2064){\makebox(0,0)[b]{\smash{{\SetFigFont{12}{14.4}{\rmdefault}{\mddefault}{\updefault}{\color[rgb]{0,0,0}$p_{a+b+c-1}$}%
}}}}
\put(4681,-1771){\makebox(0,0)[b]{\smash{{\SetFigFont{12}{14.4}{\rmdefault}{\mddefault}{\updefault}{\color[rgb]{0,0,0}$f_{(1)}$}%
}}}}
\put(4996,-3504){\makebox(0,0)[b]{\smash{{\SetFigFont{12}{14.4}{\rmdefault}{\mddefault}{\updefault}{\color[rgb]{0,0,0}$p_{a+b+c-1}$}%
}}}}
\put(4681,-3211){\makebox(0,0)[b]{\smash{{\SetFigFont{12}{14.4}{\rmdefault}{\mddefault}{\updefault}{\color[rgb]{0,0,0}$f_{(1)}$}%
}}}}
\put(4006,-1906){\makebox(0,0)[b]{\smash{{\SetFigFont{12}{14.4}{\rmdefault}{\mddefault}{\updefault}{\color[rgb]{0,0,0}$=$}%
}}}}
\put(4006,-3346){\makebox(0,0)[b]{\smash{{\SetFigFont{12}{14.4}{\rmdefault}{\mddefault}{\updefault}{\color[rgb]{0,0,0}$=$}%
}}}}
\end{picture}%

  \end{equation*}
  We recognize indeed $\rho(f_{(2)})$ in the last diagram. The projections
  $P_{c}$ included in the definition of $\rho(f_{(2)})$ do not appear on the
  diagram since they are absorbed by $P_{a+b+c-1}$ (through the trace for one of
  them), but they must be taken into account. Summing up, we have
  \begin{equation}\label{eq:inequality}
    \vert \Tr(X_{a, b, c}f)\vert \leqslant \vert\Tr(X_{a, b, c-1}f^{\flat})\vert + 
    \frac{d_{a+b-1}}{d_{a+b+c-1}}\vert \Tr(X_{a, b-1, c}f^{\sharp})\vert + 
    \frac{d_1^{3/2}d_{a-1}\sqrt{d_{b}}}{\sqrt{d_{a+b+c-1}}}\|f\|_\HS.
  \end{equation}

  We will now proceed by induction on $c$ with the following induction
  hypothesis
  \begin{center}
    $H(c)$: "for all $a \in \N$ there exists a constant $D_{a, c}$ such that for
    all $b\in \N$ and all $f\in \B(H_{a}\otimes H_{c})$ satisfying $\Tr(f) = 0$
    we have $\vert \Tr(X_{a, b, c}f)\vert \leqslant D_{a, c}\|f\|_{\HS}$".
  \end{center}
  Recall that $H(0)$ holds with $D_{a, c} = 0$ because $X_{a, b, 0}$ is an
  intertwiner, hence a multiple of the identity.

  Now we take $c>0$, we assume that $H(c-1)$ holds and we apply it to the first
  term in the right-hand side of Equation~\eqref{eq:inequality}. Since
  $\|f^{\flat}\|_\HS\leqslant \sqrt{d_1}\|f\|_\HS$ and $d_b \leqslant
  d_{a+b+c-1}$, this yields
  \begin{equation*}
    \vert \Tr(X_{a, b, c}f)\vert\leqslant (\sqrt{d_1}D_{a, c-1}+d_{1}^{3/2}d_{a-1})\|f\|_{\HS} + \frac{d_{a+b-1}}{d_{a+b+c-1}}\vert\Tr(X_{a, b-1, c}f^{\sharp})\vert.
  \end{equation*}
  We set $D' = \max(\sqrt{d_1}D_{a, c-1}+d_{1}^{3/2}d_{a-1},\sqrt{d_{a+c}})$ and
  we iterate the inequality above over $b$. Noticing that $\vert
  \Tr(X_{a,0,c}f^{\sharp b})\vert \leqslant \sqrt{d_{a+c}} \|f^{\sharp b}\|_\HS
  \leqslant D' \|f^{\sharp b}\|_\HS$ this yields, with the convention that the
  product equals $1$ for $l=0$:
  \begin{equation*}
    \vert \Tr(X_{a, b, c}f)\vert\leqslant D'\sum_{l=0}^{b}\|f^{\sharp l}\|
    \left(\prod_{t = b-l+1}^{b}\frac{d_{a+t-1}}{d_{a+t+c-1}}\right).
  \end{equation*}
  Using the inequality $\|f^{\sharp}\|_{\HS} \leqslant \|f\|_{\HS}$ , as well as
  the estimate $d_{x}/d_{y}\leqslant q^{y-x}$ for $x<y$ and the fact that $\vert
  q\vert < 1$ if $N>2$, we see that $H(c)$ holds:
  \begin{equation*}
    \vert \Tr(X_{a, b, c}f)\vert\leqslant D' \|f\|_{\HS} \sum_{l=0}^{b}q^{lc} \leqslant D' \|f\|_{\HS}\sum_{l=0}^{\infty}q^{lc}.
  \end{equation*} 
\end{proof}

It is clear from the beginning of the proof that the
Proposition~\ref{prop:partialtrace} has the following equivalent formulation,
which we will use for the proof of Theorem~\ref{thm:keyestimate}:

\begin{cor}\label{cor:partialtrace}
  Assume that $N>2$. For any $a$, $c \in \N$ and any $f \in \B(H_{a}\otimes
  H_{c})$ such that $\Tr(f) = 0$, there exists a constant $D_{a, c}$ such that
  we have, for any $b\in\N$:
  \begin{equation*}
    \vert \Tr(P_{a+b+c}f_{13}) \vert \leqslant D_{a,c} \|f\|_{\HS}.
  \end{equation*}
\end{cor}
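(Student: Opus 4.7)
The plan is to recognize the Corollary as a direct reformulation of Proposition~\ref{prop:partialtrace}, along the lines indicated by the phrase ``It is clear from the beginning of the proof\ldots''. The key identity is
\begin{equation*}
  \Tr(P_{a+b+c}\, f_{13}) = (\Tr_a \otimes \Tr_c)(X_{a,b,c}\, f),
\end{equation*}
where $X_{a,b,c} = (\id_a \otimes \Tr_b \otimes \id_c)(P_{a+b+c})$ is the non-normalized partial trace introduced at the start of the proof of Proposition~\ref{prop:partialtrace}. This follows at once from cyclicity of the trace and the tensor-product structure: writing $f = \sum f_{(1)} \otimes f_{(2)}$, one has $f_{13} = \sum f_{(1)} \otimes \id_b \otimes f_{(2)}$, and tracing separately over the middle factor gives the claim.

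Next I would decompose $X_{a,b,c} = d_b\lambda_{a,c}(\id_a \otimes \id_c) + X'_{a,b,c}$, as in the proof of the Proposition. Since $X_{a,b,c} = d_b\, x_{a,b,c}$, Proposition~\ref{prop:partialtrace} gives $\|X'_{a,b,c}\| \leqslant d_b D_{a,c}\, q^b$, and combined with $d_b q^b \leqslant 1/(1-q^2)$ this is uniformly bounded in $b$ by a constant depending only on $a$, $c$ and $N$. The hypothesis $\Tr(f) = 0$ kills the scalar contribution, so
\begin{equation*}
  \Tr(P_{a+b+c}\, f_{13}) = (\Tr_a \otimes \Tr_c)(X'_{a,b,c}\, f).
\end{equation*}

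To conclude, I would apply the Cauchy--Schwarz inequality for the Hilbert--Schmidt inner product:
\begin{equation*}
  \bigl|(\Tr_a \otimes \Tr_c)(X'_{a,b,c}\, f)\bigr| \leqslant \|X'_{a,b,c}\|_{\HS}\, \|f\|_{\HS} \leqslant \sqrt{d_a d_c}\, \|X'_{a,b,c}\|\, \|f\|_{\HS},
\end{equation*}
which yields the desired bound $|\Tr(P_{a+b+c}\, f_{13})| \leqslant D_{a,c}\|f\|_{\HS}$ with $D_{a,c}$ depending only on $a$, $c$ and $N$. There is essentially no obstacle here: all the substance lies in Proposition~\ref{prop:partialtrace}, and the Corollary is extracted by the formal manipulations of trading the trace against $P_{a+b+c}$ for a pairing against $X_{a,b,c}$ and using the trace-zero hypothesis on $f$ to eliminate the otherwise dominant scalar term $d_b\lambda_{a,c}(\id_a \otimes \id_c)$.
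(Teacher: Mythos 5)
Your proposal is correct and follows essentially the same route as the paper: the identity $\Tr(P_{a+b+c}f_{13}) = (\Tr_a\otimes\Tr_c)(X_{a,b,c}f)$ and the decomposition of $f$ into a scalar and a trace-zero part are exactly the reductions made at the beginning of the proof of Proposition~\ref{prop:partialtrace}, where the trace-pairing bound (the induction claim $H(c)$) is what is actually proved. The only cosmetic difference is that you recover that pairing bound from the operator-norm statement of the Proposition via Cauchy--Schwarz, at the cost of a harmless factor $\sqrt{d_a d_c}$, whereas the paper simply invokes the claim established inside its own proof.
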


\subsection{A variation on Wenzl's recursion formula}

The second result can be called a "higher weight" version of Wenzl's recursion
formula \eqref{eq:wenzlrecursionreflected}. As a matter of fact, let $\zeta =
\sum\zeta^{(1)}\otimes \zeta^{(2)}$ be a vector in $H_{2}\subset H_{1}\otimes
H_{1}$. Then, the map $f=\sum \zeta^{(2)}\overline{\zeta}^{(1)*}\in \B(H_{1})$
has trace $0$, so that applying $\Tr_{1}(f\,\cdot\,)\otimes \id_{n-1}$ to both
sides of Equation~\eqref{eq:wenzlrecursionreflected} yields
\begin{equation*}
  \sum (\overline{\zeta}_{(1)}^{*}\otimes \id_{n-1}) P_{n}(\zeta_{(2)}\otimes\id_{n-1}) = - \frac{d_{n-2}}{d_{n-1}}\sum P_{n-1}(\zeta_{(1)}\overline{\zeta}_{(2)}^{*}\otimes \id_{n-2})P_{n-1}.
\end{equation*}
What we are going to prove is a similar equality but with $\zeta$ being any
highest weight vector, i.e. $\zeta\in H_{p+q}\subset H_{p}\otimes H_{q}$ for
arbitrary $p$ and $q$.

\begin{lem}\label{lem:astuce}
  Let $\zeta\in H_{p+q}$ be decomposed as $\zeta = \sum
  \zeta^{(1)}\otimes\zeta^{(2)}\in H_{p}\otimes H_{q}$ and $\zeta = \sum
  \zeta_{(1)}\otimes\zeta_{(2)}\in H_{q}\otimes H_{p}$. For all $n\geqslant
  p+q$, there exist $\alpha_{p, q}^{n}\in \C$ such that
  \begin{align}\label{eq:first}
    \sum (\overline{\zeta}^{(1)*}\otimes
    \id_{n-p})P_{n}(\zeta^{(2)}\otimes\id_{n-q}) & = \alpha_{p, q}^{n}\sum
    P_{n-p}(\zeta_{(1)}\overline{\zeta}_{(2)}^{*}\otimes
    \id_{n-p-q})P_{n-q}\\ \label{eq:second} \sum
    (\id_{n-p}\otimes\zeta^{(1)*})P_{n}(\id_{n-q}\otimes \overline{\zeta}^{(2)})
    & = \alpha_{p, q}^{n}\sum
    P_{n-p}(\id_{n-p-q}\otimes\overline{\zeta}_{(1)}\zeta_{(2)}^{*})P_{n-q}.
  \end{align}
  Moreover, there exist constants $C_{p, q}>0$ such that for all $n\in \N$,
  $C_{p, q}\leqslant \vert\alpha_{p, q}^{n}\vert\leqslant 1$.
\end{lem}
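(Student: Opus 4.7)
The core idea is to view both sides of identity \eqref{eq:first} as morphisms in the tensor category of representations of $O_N^+$, apply Schur's lemma via the fusion rules to deduce proportionality, and then quantitatively estimate the resulting constant.

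First I would rewrite the left-hand side more intrinsically. Using the identity $\overline{v}^* = t_p^*(v \otimes \cdot)$ valid for $v \in H_p$ (together with self-conjugacy $\bar p = p$) and expanding $\zeta = \sum \zeta^{(1)} \otimes \zeta^{(2)} \in H_p \otimes H_q$, one checks directly that for all $\xi \in H_1^{\otimes(n-q)}$,
\[
  \sum (\overline{\zeta}^{(1)*} \otimes \id_{n-p}) P_n (\zeta^{(2)} \otimes \id_{n-q})(\xi) = (t_p^* \otimes \id_{n-p})(\id_p \otimes P_n)(\zeta \otimes \xi),
\]
and an analogous reformulation holds for the right-hand side via the inclusion $H_{p+q} \hookrightarrow H_q \otimes H_p$. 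Using the factorizations $P_n = P_n(P_q \otimes P_{n-q}) = (P_p \otimes P_{n-p}) P_n$, both sides vanish when $\xi$ is replaced by $\xi - P_{n-q} \xi$ and take values in $H_{n-p}$. Letting $\zeta$ vary, the two sides therefore define morphisms in $\Mor(u^{p+q} \otimes u^{n-q}, u^{n-p})$.

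By Banica's fusion rules (Theorem~\ref{thm:freefusion}), the tensor product $u^{p+q} \otimes u^{n-q}$ decomposes as $\bigoplus_{k=0}^{\min(p+q, n-q)} u^{n+p-2k}$ and, since $n \geq p+q$, it contains $u^{n-p}$ exactly once (at $k = p$). Hence the above morphism space is one-dimensional, the two morphisms are proportional, and this defines $\alpha_{p,q}^n$. Equation \eqref{eq:second} follows by running the symmetric argument (or by composing with the antilinear isomorphisms $j_n$, $j_{n-p}$, $j_{n-q}$, $j_{p+q}$), with the same scalar.

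The two-sided bound on $|\alpha_{p,q}^n|$ is the main obstacle, and I expect it to be where the real work lies. Evaluating both sides on a convenient normalized pair $(\zeta_0, \xi_0)$ expresses $\alpha_{p,q}^n$ as an inner product in $H_{n-p}$. The upper bound $|\alpha_{p,q}^n| \leq 1$ should follow from operator-norm estimates exploiting $\|P_n\| = 1$ together with a matching lower bound on the right-hand side coming from the sandwiching projections $P_{n-q}$, $P_{n-p}$. For the uniform lower bound $C_{p,q} \leq |\alpha_{p,q}^n|$ I would proceed by induction on $p$: the case $p = 1$ follows from a direct application of Wenzl's recursion \eqref{eq:wenzlrecursion}, which yields $\alpha_{1,q}^n$ as an explicit ratio of quantum dimensions bounded below uniformly in $n$ using $d_x/d_y \geq q^{y-x}(1-q^2)^2$ (recall $q < 1$ since $N > 2$); the inductive step applies the reflected Wenzl recursion \eqref{eq:wenzlrecursionreflected} to $P_{p+q}$, writing $\zeta = P_{p+q}\zeta$ as a dominant term in $H_1 \otimes H_{p+q-1}$ plus a correction, and thereby relating $\alpha_{p,q}^n$ to $\alpha_{p-1,q}^{n-1}$ times a factor that must be shown to stay bounded away from zero uniformly in $n$. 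Controlling this cumulative correction is the delicate point of the proof.
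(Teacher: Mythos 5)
Your reduction of the existence of $\alpha_{p,q}^n$ to Schur's lemma is correct and is genuinely different from (and slicker than) the paper's route: rewriting the left side as $(t_p^*\otimes\id_{n-p})(\id_p\otimes P_n)$ restricted to $H_{p+q}\otimes H_{n-q}$ and the right side as $(V_{n-p}^{p+q,n-q})^*$ exhibits both as elements of $\Mor(u^{p+q}\otimes u^{n-q},u^{n-p})$, which is one-dimensional by the fusion rules since $n\geqslant p+q$; the paper instead obtains the scalar only as a by-product of its inductive computation. (You should still note that the right-hand intertwiner is nonzero — it is a nonzero multiple of the isometry $v_{n-p}^{p+q,n-q}$ — otherwise proportionality in the form stated does not follow.)

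However, the two-sided bound $C_{p,q}\leqslant|\alpha_{p,q}^n|\leqslant 1$, which you correctly identify as the real content of the lemma, is not proved: you defer exactly the step that carries the difficulty ("controlling this cumulative correction is the delicate point"). Moreover the inductive scheme you sketch is unlikely to close as stated. You propose to apply the reflected Wenzl recursion to $P_{p+q}$, i.e.\ to decompose $\zeta$ itself into a dominant piece plus a correction inside $H_1\otimes H_1^{\otimes(p+q-1)}$, and to relate $\alpha_{p,q}^n$ to $\alpha_{p-1,q}^{n-1}$. Since every contribution is, by Schur's lemma, a multiple of the same target intertwiner, your decomposition produces a sum of scalars, and nothing in your argument rules out cancellation between the dominant term and the correction; a lower bound on each piece does not give a lower bound on the sum. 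The paper avoids this entirely by applying the recursion \eqref{eq:wenzlrecursionreflected} to the large projection $P_n$ instead: the term $\id_1\otimes P_{n-1}$ then contributes \emph{zero}, because it produces the factor $(\id_{p-1}\otimes t_1^*\otimes\id_{q-1})(\zeta)$ which vanishes precisely because $\zeta$ is a highest weight vector. Only the second Wenzl term survives, and three applications of the induction hypothesis (on $p+q$, not on $p$) give the exact product formula $\alpha_{p,q}^n=-\frac{d_{n-2}}{d_{n-1}}\alpha_{p-1,1}^{n-1}\alpha_{1,q-1}^{n-1}\alpha_{p-1,q-1}^{n-2}$, from which both bounds are immediate ($d_{n-2}/d_{n-1}\in[1/d_1,1[$). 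Your claim that $|\alpha_{p,q}^n|\leqslant 1$ "should follow from operator-norm estimates exploiting $\|P_n\|=1$" is likewise unsubstantiated: the natural norm bounds on the two intertwiners involve $\sqrt{d_p}$ and the constants $\kappa_{n-p}^{p+q,n-q}$, and do not yield $1$ without the recursion or an explicit computation. To repair the proof you should redirect your induction so that the highest-weight property of $\zeta$ kills one of the two Wenzl terms outright, rather than trying to bound a correction whose sign and size relative to the main term you cannot control.
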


\begin{proof}
  Let us first note that the second equality follows from the first one by
  conjugation, hence we will only focus on the first one. If $p=0$, then
  \begin{equation*}
    P_{n}(\zeta^{(2)}\otimes \id_{n-q}) =  P_{n}(\zeta^{(2)}\otimes \id_{n-q})P_{n-q} = P_{n}(\zeta_{(1)}\otimes \id_{n-q})P_{n-q}
  \end{equation*}
  and the result is proved with $\alpha_{0, q}^{n} = 1$ for all $n$. Similarly,
  the result holds for $q=0$ with $\alpha_{p, 0}^{n} = 1$. We will proceed by
  induction on $p$ and $q$ with the induction hypothesis
  \begin{center}
    $H_{N}$ : "For any $p, q$ with $p+q\leqslant N$, there exists a constant
    $C_{p, q}>0$ such that for all $n\geqslant p+q$, \\ there is a constant
    $\alpha_{p, q}^{n}$ such that Equations \eqref{eq:first} and
    \eqref{eq:second} hold and $C_{p, q}\leqslant \vert\alpha_{p,
      q}^{n}\vert\leqslant 1$."
  \end{center}
  As we have seen, $H_{0}$ and $H_{1}$ hold, so let us assume $H_{N}$ and
  consider $p, q\geqslant 1$ such that $p+q=N+1$. In order to use the induction
  hypothesis, we refine the decompositions of $\zeta$ in the following way:
  \begin{align*}
    \zeta^{(1)} & = \sum \zeta^{(11)}\otimes \zeta^{(12)}\in H_{p-1}\otimes H_{1}, \\
    \zeta^{(1)} & = \sum \zeta^{(1)}_{(1)}\otimes \zeta^{(1)}_{(2)}\in H_{1}\otimes H_{p-1}, \\
    \zeta^{(2)} & = \sum \zeta^{(21)}\otimes \zeta^{(22)}\in H_{1}\otimes H_{q-1}, \\
    \zeta^{(2)} & = \sum \zeta^{(2)}_{(1)}\otimes \zeta^{(2)}_{(2)}\in
    H_{q-1}\otimes H_{1}.
  \end{align*}
  Applying the map $\sum (\overline{\zeta}^{(1)*}\otimes \id_{n-p}) (\,\cdot\,)
  (\zeta^{(2)}\otimes \id_{n-q})$ to Wenzl's formula
  \eqref{eq:wenzlrecursionreflected}, the first term on the right-hand side
  reads
  \begin{align*}
    & \sum (\overline{\zeta}^{(12)*}\otimes \overline{\zeta}^{(11)*}\otimes
    \id_{n-p})(\id_{1}\otimes P_{n-1})(\zeta^{(21)}\otimes \zeta^{(22)}\otimes
    \id_{n-q})
    \\
    = & \sum
    \overline{\zeta}^{(12)*}(\zeta^{(21)})(\overline{\zeta}^{(11)*}\otimes
    \id_{n-p})P_{n-1}(\zeta^{(22)}\otimes \id_{n-q}).
  \end{align*}
  Consider the linear map $T : H_{p-1}\otimes H_{q-1} \rightarrow \B(H_{n-q},
  H_{n-p})$ defined by $T(x\otimes y) =
  (\overline{x}^*\otimes\id_{n-p})P_{n-1}(y\otimes \id_{n-q})$. Then, the term
  above equals
  \begin{equation*}
    T\left(\sum \overline{\zeta}^{(12)*}(\zeta^{(21)})(\zeta^{(11)}\otimes \zeta^{(22)})\right) = 
    T\left((\id_{p-1}\otimes t_{1}^{*}\otimes\id_{q-1})(\zeta)\right).
  \end{equation*}
  The argument of $T$ on the right-hand side vanishes because $\zeta$ is a
  highest weight vector, so that the whole term vanishes. Coming back
  to~\eqref{eq:wenzlrecursionreflected} and setting $L =
  \sum(\overline{\zeta}^{(1)*}\otimes \id_{n-p})P_{n}(\zeta^{(2)}\otimes
  \id_{n-q})$, we thus have
  \begin{align*}
    L & = -\frac{d_{n-2}}{d_{n-1}}\sum (\overline{\zeta}^{(1)*}\otimes \id_{n-p})(\id_{1}\otimes P_{n-1})(t_{1}t_{1}^{*}\otimes \id_{n-2})(\id_{1}\otimes P_{n-1})(\zeta^{(2)}\otimes \id_{n-q}) \\
    & = -\frac{d_{n-2}}{d_{n-1}}\sum (\overline{\zeta}^{(11)*}\otimes \id_{n-p})P_{n-1}(\overline{\zeta}^{(12)*}\otimes \id_{n-1})(t_{1}t_{1}^{*}\otimes \id_{n-2})(\zeta^{(21)}\otimes \id_{n-1})P_{n-1}(\zeta^{(22)}\otimes \id_{n-q}) \\
    & = -\frac{d_{n-2}}{d_{n-1}}\sum
    (\overline{\zeta}^{(11)*}\otimes\id_{n-p})P_{n-1}(\zeta^{(12)}\overline{\zeta}^{(21)*}\otimes\id_{n-2})P_{n-1}(\zeta^{(22)}\otimes
    \id_{n-q}).
  \end{align*}
  Now we apply $H_{N}$ to $\zeta^{(1)}$ (with $p'=p-1$, $q'=1$) and to
  $\zeta^{(2)}$ (with $p'=1$, $q'=q-1$) to get
  \begin{equation*}
    L = -\frac{d_{n-2}}{d_{n-1}}\alpha_{p-1, 1}^{n-1}\alpha_{1, q-1}^{n-1}\sum \left(P_{n-p}(\zeta^{(1)}_{(1)}\overline{\zeta}^{(1)*}_{(2)}\otimes \id_{n-p-1})P_{n-2}\right)
    \left(P_{n-2}(\zeta^{(2)}_{(1)}\overline{\zeta}^{(2)*}_{(2)}\otimes \id_{n-q-1})P_{n-q}
    \right).
  \end{equation*}
  The last step is to apply again the induction hypothesis. To do this, we need
  to refine once more our decomposition by setting
  \begin{align*}
    \zeta & = \sum \eta^{(1)}\otimes \eta^{(2)}\otimes \eta^{(3)} \in H_{1}\otimes H_{p+q-2}\otimes H_{1} \\
    \eta^{(2)} & = \sum \eta^{(21)}\otimes \eta^{(22)} \in H_{p-1}\otimes H_{q-1} \\
    \eta^{(2)} & = \sum \eta^{(2)}_{(1)}\otimes \eta^{(2)}_{(2)} \in
    H_{q-1}\otimes H_{p-1}.
  \end{align*}
  Note that in the above computations we can replace everywhere
  $\zeta^{(1)}_{(1)}$, $\zeta^{(1)}_{(2)}$, $\zeta^{(2)}_{(1)}$ and
  $\zeta^{(2)}_{(2)}$ respectively by $\eta^{(1)}$, $\eta^{(21)}$, $\eta^{(22)}$
  and $\eta^{(3)}$. Thus, applying $H_{N}$ to $\eta^{(2)}$ (with $p'=p-1$,
  $q'=q-1$) yields
  \begin{align*}
    L & = -\frac{d_{n-2}}{d_{n-1}}\alpha_{p-1, 1}^{n-1}\alpha_{1, q-1}^{n-1}\alpha_{p-1, q-1}^{n-2}\sum P_{n-p}(\eta^{(1)}\otimes \id_{n-p-1}) \\
    & \makebox[2cm]{} P_{n-p-1}(\eta^{(2)}_{(1)}\overline{\eta}^{(2)*}_{(2)}\otimes \id_{n-p-q})P_{n-q-1}(\overline{\eta}^{(3)*}\otimes \id_{n-q-1})P_{n-q} \\
    & = -\frac{d_{n-2}}{d_{n-1}}\alpha_{p-1, 1}^{n-1}\alpha_{1, q-1}^{n-1}\alpha_{p-1, q-1}^{n-2}\sum P_{n-p}\left((\eta^{(1)}\otimes \eta^{(2)}_{(1)})(\overline{\eta}^{(3)*}\otimes \overline{\eta}^{(2)*}_{(2)})\otimes \id_{n-p-q} \right)P_{n-q} \\
    & = -\frac{d_{n-2}}{d_{n-1}}\alpha_{p-1, 1}^{n-1}\alpha_{1,
      q-1}^{n-1}\alpha_{p-1, q-1}^{n-2}\sum
    P_{n-p}(\zeta_{(1)}\overline{\zeta}_{(2)}^{*}\otimes \id_{n-p-q})P_{n-q}.
  \end{align*}
  This proves Equation \eqref{eq:first} for $p$ and $q$ and as mentioned at the
  beginning of the proof, Equation \eqref{eq:second} follows by
  conjugation. Moreover, we see that
  \begin{equation*}
    \vert\alpha_{p, q}^{n}\vert \geqslant 
    \frac{d_{n-2}}{d_{n-1}}C_{p-1, 1}C_{1, q-1}C_{p-1, q-1} \geqslant \frac{1}{d_{1}}C_{p-1, 1}C_{1, q-1}C_{p-1, q-1} > 0
  \end{equation*}
  hence $H_{N+1}$ holds and the proof is complete.
\end{proof}

\section{The key estimate}\label{sec:estimate}

We now turn to the main technical result of this article,
Theorem~\ref{thm:keyestimate}, which concerns the behavior of the scalar
product $\langle \chi_{l}u_{\xi'\eta'}^{k}, u_{\xi\eta}^{n}\chi_{l'}\rangle$ as
$l$, $l'$ tend to $+\infty$. Its proof will span the whole of this section.

We start by recalling two technical lemmata from the literature on free
orthogonal quantum groups. The first one gives a norm estimate for some explicit
intertwiners in tensor products of irreducible representations. For any four
integers $l$, $k$, $m$ and $a$ such that $k+l = m+2a$, the map
\begin{equation*}
  \left(V_{m}^{l, k}\right)^{*} =  P_{m}(\id_{l-a}\otimes t_{a}^{*}\otimes \id_{k-a})
\end{equation*}
is an intertwiner from $H_{l}\otimes H_{k}$ to $H_{m}$, hence there is a scalar
$\kappa_{m}^{l, k}$ such that $v_{m}^{l, k} = \kappa_{m}^{l, k} V_{m}^{l, k}$ is
an isometric intertwiner. The scalar $\kappa_{m}^{l, k}$ can be explicitly
computed, see \cite{vergnioux2007property}. However, we will only need the
following consequence of this computation.

\begin{lem}\label{lem:kappa}
  There exists a constant $B_{a}$, depending only on $a$ and $N$, such that for
  all $k$, $l$ and $m = k+l-2a$ we have $\left\vert\kappa_{m}^{l,
      k}\right\vert\leqslant B_{a}$.
\end{lem}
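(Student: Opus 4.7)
My plan is to reduce to a computation that is already in the literature and then extract the bound from it.

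First I would observe that, because $H_m$ is irreducible, Schur's lemma applied to the intertwiner $V_m^{l,k}{}^*V_m^{l,k}\in\End(H_m)$ yields a scalar: $V_m^{l,k}{}^*V_m^{l,k} = c_m^{l,k}\cdot\id_{H_m}$ with $c_m^{l,k}>0$. The isometry condition defining $v_m^{l,k}$ (combined with $v_m^{l,k}=\kappa_m^{l,k}V_m^{l,k}$) then forces $|\kappa_m^{l,k}|^{2}=1/c_m^{l,k}$. Thus the lemma is equivalent to producing a \emph{lower} bound $c_m^{l,k}\geqslant C_a>0$ depending only on $N$ and $a$, and I would set $B_a=C_a^{-1/2}$.

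To get at $c_m^{l,k}$ I would unfold
\begin{equation*}
  c_m^{l,k}\cdot\id_{H_m}=P_m(\id_{l-a}\otimes t_a^*\otimes\id_{k-a})(P_l\otimes P_k)(\id_{l-a}\otimes t_a\otimes\id_{k-a})P_m,
\end{equation*}
where the middle $P_l\otimes P_k$ appears because when computing $V_m^{l,k}=(V_m^{l,k*})^*$ from $(V_m^{l,k})^*$ one must orthogonally project the output of $\id\otimes t_a\otimes\id$ back onto $H_l\otimes H_k\subset H_1^{\otimes(l+k)}$. If this central $P_l\otimes P_k$ were replaced by the identity on $H_1^{\otimes(l+k)}$, then $t_a^* t_a=d_a$ collapses the middle and we would simply get $d_a\id_{H_m}$; so what has to be controlled is the loss produced by inserting $P_l\otimes P_k$. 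This quantity was computed explicitly in \cite{vergnioux2007property} by iterating Wenzl's recursion, and the outcome is a closed-form rational expression of the $d_i$'s (essentially of the shape $c_m^{l,k}=d_l d_k/(d_m d_a)$ up to elementary factors).

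Using the stated estimate $q^{n}(1-q^2)\leqslant d_n^{-1}\leqslant q^n/(1-q^2)$ together with the constraint $m=l+k-2a$, one has $d_l d_k/d_m\asymp q^{2a}/(1-q^2)^3$; hence the closed form above gives $c_m^{l,k}\geqslant C_a$ for an explicit positive $C_a$ depending only on $a$ and $N$, and we conclude. In short, I would not reprove the identity from \cite{vergnioux2007property} but simply quote it, and then read off the required uniform bound using the quantum-dimension estimates.

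The main obstacle---and the reason for pointing to an external reference---is the closed-form evaluation of $c_m^{l,k}$: this is a somewhat delicate inductive Temperley-Lieb--style calculation (very much in the spirit of Lemma~\ref{lem:astuce}, which could in principle also be used to carry it out here). Once this formula is in hand, every other step is a short bookkeeping exercise with the $d_n$.
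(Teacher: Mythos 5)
Your proposal matches the paper's own proof, which likewise does not recompute the normalization but simply invokes the explicit evaluation of $\kappa_{m}^{l,k}$ from \cite[Lem 4.8]{vergnioux2007property} and reads off a bound depending only on $a$ and $N$; your reduction to a lower bound on $c_{m}^{l,k}=\vert\kappa_{m}^{l,k}\vert^{-2}$ via Schur's lemma and the quantum-dimension estimates is exactly the intended mechanism. One small slip that does not affect the conclusion: with $m=l+k-2a$ one has $d_{l}d_{k}/d_{m}\asymp q^{-2a}$ rather than $q^{2a}$, but since only the positivity of a constant $C_{a}$ allowed to depend on $a$ is needed, the lemma follows either way.
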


\begin{proof}
  This is a consequence of the estimates given in \cite[Lem
  4.8]{vergnioux2007property}, see also \cite{fima2014cocycle}. The sequence
  $(B_{a})_{a}$ diverges exponentially as $q^{-a/2}$.
\end{proof}

We will also need the following estimates which were already used in
\cite{vaes2007boundary} and \cite{fima2014cocycle}.

\begin{lem}\label{lem:productprojections}
  Let $x$, $y$ and $z$ be integers and let $\mu\neq x+y+z$ be a
  subrepresentation of both $x\otimes (y+z)$ and $(x+y)\otimes z$. Then, there
  exists a constant $A>0$ depending only on $N$ such that
  \begin{equation*}
    \|(\id_{x}\otimes P_{y+z})(P_{x+y}\otimes \id_{z})-P_{x+y+z}\| \leqslant Aq^{y} \text{ and } \|P^{x, y+z}_{\mu}P_{\mu}^{x+y, z}\|\leqslant Aq^{y}.
  \end{equation*}
\end{lem}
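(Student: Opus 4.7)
The plan is to first show that the two estimates are equivalent up to a common constant, and then to prove the second one by extracting a scalar intertwiner and estimating it.

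For the reduction, I would set $T = (\id_x \otimes P_{y+z})(P_{x+y} \otimes \id_z)$ and $P = P_{x+y+z}$. Checking images and ranges shows that both $T$ and $P$ are supported on $H_x \otimes H_y \otimes H_z \subset H_1^{\otimes(x+y+z)}$, on which they become $T = (\id_{H_x} \otimes P^{y, z}_{y+z})(P^{x, y}_{x+y} \otimes \id_{H_z})$ and $P = P^{x, y+z}_{x+y+z} = P^{x+y, z}_{x+y+z}$. Decomposing
\[
\id_{H_x} \otimes P^{y, z}_{y+z} = \sum_\mu P^{x, y+z}_\mu \quad \text{and} \quad P^{x, y}_{x+y} \otimes \id_{H_z} = \sum_\nu P^{x+y, z}_\nu
\]
(with each summand viewed as a projection on $H_x \otimes H_y \otimes H_z$ via the natural inclusion of $H_x \otimes H_{y+z}$ or $H_{x+y} \otimes H_z$), the cross terms $P^{x, y+z}_\mu P^{x+y, z}_\nu$ vanish for $\mu \neq \nu$ since their images lie in orthogonal isotypic components of $H_x \otimes H_y \otimes H_z$. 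Thus
\[
T - P = \sum_{\mu < x+y+z} Q_\mu, \qquad Q_\mu := P^{x, y+z}_\mu P^{x+y, z}_\mu.
\]
The same orthogonality gives $Q_\mu Q_\nu^* = Q_\mu^* Q_\nu = 0$ for $\mu \neq \nu$, so $\|T - P\|^2 = \|\sum_\mu Q_\mu Q_\mu^*\| = \max_\mu \|Q_\mu\|^2$, and the two inequalities are equivalent with the same constant.

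To bound $\|Q_\mu\|$, write $Q_\mu = v^{x, y+z}_\mu \bigl[(v^{x, y+z}_\mu)^* v^{x+y, z}_\mu\bigr] (v^{x+y, z}_\mu)^*$. The middle factor is an intertwiner of the irreducible $u^\mu$ with itself, hence equal to $\lambda_\mu \cdot \id_\mu$ for some scalar $\lambda_\mu$, and $\|Q_\mu\| = |\lambda_\mu|$ since the outer factors are isometries. Using $v^{l, k}_\mu = \kappa^{l, k}_\mu (\id_{l-a} \otimes t_a \otimes \id_{k-a}) P_\mu$ with $2a = x+y+z - \mu > 0$, one reads off $\lambda_\mu$ from
\[
\lambda_\mu d_\mu = \overline{\kappa^{x, y+z}_\mu}\,\kappa^{x+y, z}_\mu \;\Tr\bigl(P_\mu (\id_{x-a} \otimes t_a^* \otimes \id_{y+z-a}) (\id_{x+y-a} \otimes t_a \otimes \id_{z-a})\bigr).
\]

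The trace on the right is a closed diagram in which a ``cap'' $t_a$ and a ``cup'' $t_a^*$ are attached at positions separated by $y$ free strands in the middle, all wrapped by $P_\mu$. The main obstacle will be to show that this ``long diagram'' decays like $q^y$: morally the middle portion realises a partial trace of $P_{\mu + 2a}$ over $y$ strands, which by Proposition~\ref{prop:partialtrace} (or Corollary~\ref{cor:partialtrace}) converges to a scalar multiple of the identity at rate $q^y$, and the deviation from that scalar is exactly what controls $|\lambda_\mu|$. The remaining bookkeeping is to check that the prefactor $|\kappa^{x, y+z}_\mu \kappa^{x+y, z}_\mu|/d_\mu$ is bounded by a constant depending only on $N$, which should follow from the explicit computation of $\kappa^{l, k}_\mu$ in \cite{vergnioux2007property} cancelling the $q^{-a/2}$-growth of the $B_a$ in Lemma~\ref{lem:kappa} against the $q^{-\mu}$-growth of $d_\mu$.
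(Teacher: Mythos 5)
Your reduction step is correct and is in fact sharper than what the paper records: writing $T = (\id_{x}\otimes P_{y+z})(P_{x+y}\otimes \id_{z})$, the decomposition $T - P_{x+y+z} = \sum_{\mu < x+y+z} P^{x,y+z}_{\mu}P^{x+y,z}_{\mu}$ with mutually orthogonal left and right supports does give $\|T - P_{x+y+z}\| = \max_{\mu}\|P^{x,y+z}_{\mu}P^{x+y,z}_{\mu}\|$, so the two estimates are equivalent. The paper only goes one way (and more cheaply): it quotes the first inequality from \cite[Lem.\ A.4]{vaes2007boundary} and deduces the second in three lines from the identity $P^{x,y+z}_{\mu}P^{x+y,z}_{\mu} = P^{x,y+z}_{\mu}\bigl(T - P_{x+y+z}\bigr)P^{x+y,z}_{\mu}$, which holds because $\mu$ is not the top weight.

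The gap is that the $q^{y}$ decay itself is never established, and your plan for it does not go through as sketched. First, the displayed formula for $\lambda_{\mu}d_{\mu}$ is incorrect: $v^{x+y,z}_{\mu}$ lands in $H_{x+y}\otimes H_{z}$, which is \emph{not} contained in $H_{x}\otimes H_{y+z}$, so when you substitute the explicit formula for $(v^{x,y+z}_{\mu})^{*}$ you must keep the projection $(\id_{x}\otimes P_{y+z})(P_{x+y}\otimes\id_{z})$ between the cap and the cup --- already for $x=y=z=1$, $\mu=1$ the discarded term $P_{1}(t_{1}^{*}\otimes\id_{1})(\id_{1}\otimes(1-P_{2}))$ is nonzero. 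The honest closed diagram therefore contains the two large Jones--Wenzl projections $P_{x+y}$ and $P_{y+z}$ and is not a partial trace of $P_{\mu+2a}$ over $y$ strands, so Proposition~\ref{prop:partialtrace} and Corollary~\ref{cor:partialtrace} do not apply to it without a genuinely new computation; note also that Corollary~\ref{cor:partialtrace} only yields uniform boundedness against traceless test operators, and the $q^{b}$ rate in Proposition~\ref{prop:partialtrace} comes from dividing by $d_{b}$, a mechanism you have not exhibited here. Finally, the proposed cancellation in the prefactor does not close either: $|\kappa^{x,y+z}_{\mu}\kappa^{x+y,z}_{\mu}|/d_{\mu}$ behaves like $q^{x+y+z-3a}$ by Lemma~\ref{lem:kappa}, which is not bounded by a constant depending only on $N$, so a uniform bound on the trace would not suffice. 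In short, the proposal reduces the lemma to its hard analytic content and stops there; to complete it you should either import the first inequality from \cite{vaes2007boundary} (whose proof is an induction on $y$ via Wenzl's recursion), as the paper does, or reproduce that induction.
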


\begin{proof}
  The first inequation is \cite[Lem A.4]{vaes2007boundary}.  For the second one,
  note that $P_{\mu}^{x, y+z}P_{x+y+z} = 0 = P_{x+y+z}P_{\mu}^{x+y, z}$ because
  $\mu$ is not the highest weight. Thus, we have
  \begin{eqnarray*}
    \|P^{x, y+z}_{\mu}P_{\mu}^{x+y, z}\| & = & \|P^{x, y+z}_{\mu}\left((\id_{x}\otimes P_{y+z})(P_{x+y}\otimes \id_{z})-P_{x+y+z}\right)P_{\mu}^{x+y, z}\| \\
    & \leqslant & \|P^{x, y+z}_{\mu}\|\|(\id_{x}\otimes P_{y+z})(P_{x+y}\otimes \id_{z})-P_{x+y+z}\|\|P_{\mu}^{x+y, z}\| \\
    & \leqslant & \|(\id_{x}\otimes P_{y+z})(P_{x+y}\otimes \id_{z})-P_{x+y+z}\| \\
    & \leqslant & Aq^{y}.
  \end{eqnarray*}
\end{proof}

We now state and prove an estimate, as $l$, $l'$ tend to $+\infty$, about the
scalar product between products of the characters $\chi_l$, $\chi_{l'}$ with
coefficients of fixed representations. Since $\chi_l$, $\chi_{l'}$ have norm $1$
in the GNS space $L^2(\G)$, it is clear that these scalar products are bounded
when $l$, $l'$ tend to $+\infty$. However one can do much better:

\begin{thm}\label{thm:keyestimate}
  Assume that $N>2$. Let $k$, $n$ be integers, let $\xi, \eta\in H_{n}$ be
  \emph{orthogonal} unit vectors and let $\xi', \eta'\in H_{k}$ be arbitrary
  unit vectors. Then, there exists $K>0$ such that we have, for all integers
  $l$, $l'$:
  \begin{equation*}
    \left\vert\left\langle \chi_{l}u_{\xi'\eta'}^{k}, u_{\xi\eta}^{n}\chi_{l'}\right\rangle\right\vert \leqslant K q^{\max(l, l')}.
  \end{equation*}
  In particular $\left\vert\left\langle \chi_{l}u_{\xi'\eta'}^{k},
      u_{\xi\eta}^{n}\chi_{l'}\right\rangle\right\vert \to 0$ when $l$ or $l'$
  tends to $+\infty$.
\end{thm}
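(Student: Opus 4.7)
\emph{Proof proposal.}
The plan is to reduce the scalar product to a sum of traces on the irreducible spaces $H_\gamma$ for common components $\gamma$ of $l\otimes k$ and $n\otimes l'$, identify a tracelessness property that follows from the orthogonality hypothesis $\xi\perp\eta$, and exploit it via Lemma~\ref{lem:astuce} and Corollary~\ref{cor:partialtrace} to obtain the desired decay $O(q^{\max(l,l')})$.

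First I would expand, writing $\chi_l = \sum_i u^l_{e_i,e_i}$ in an orthonormal basis $(e_i)$ of $H_l$ and applying \eqref{eq:product}. By Woronowicz's orthogonality formula, with $Q_\alpha=\id$ since $O_N^+$ is of Kac type, only the common irreducible components $\gamma$ of $l\otimes k$ and $n\otimes l'$ contribute. Using the identity $\sum_i |e_i\otimes \alpha\rangle\langle e_i\otimes \beta| = \id_l\otimes \alpha\beta^*$ to sum the rank-one contributions, one finds
\[
\langle \chi_l u^k_{\xi'\eta'}, u^n_{\xi\eta}\chi_{l'}\rangle = \sum_{\gamma}\frac{1}{d_\gamma}\,\Tr_{H_\gamma}(F_\gamma G_\gamma),
\]
where $F_\gamma = v^{n,l'*}_\gamma(\eta\xi^*\otimes\id_{l'})v^{n,l'}_\gamma$, $G_\gamma = v^{l,k*}_\gamma(\id_l\otimes\xi'\eta'^*)v^{l,k}_\gamma$, and the sum runs over $\gamma=l+k-2a=n+l'-2b$ with $a\in\{0,\ldots,\min(k,l)\}$ and $b\in\{0,\ldots,\min(n,l')\}$; it contains at most $\min(k,n)+1$ terms.

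The orthogonality hypothesis $\xi\perp\eta$ enters as $\Tr(F_\gamma) = 0$: the partial trace $(\id_n\otimes\Tr_{l'})(P^{n,l'}_\gamma)$ is a self-intertwiner of the irreducible $u^n$, hence equals $c\,\id_n$ for some $c>0$, giving $\Tr(F_\gamma) = c\langle\eta,\xi\rangle = 0$. To bring this into a form exploitable by Corollary~\ref{cor:partialtrace}, I substitute the explicit expressions $v^{l,k}_\gamma = \kappa^{l,k}_\gamma(\id_{l-a}\otimes t_a\otimes\id_{k-a})P_\gamma$ and its analogue for $v^{n,l'}_\gamma$ into $\Tr(F_\gamma G_\gamma)$. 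The resulting expression contains sandwiches of the form $(\overline{\zeta}^{(1)*}\otimes\id)P_\gamma(\zeta^{(2)}\otimes\id)$ and their mirror versions, precisely the left-hand sides of Equations \eqref{eq:first} and \eqref{eq:second} in Lemma~\ref{lem:astuce}, with $\zeta$ running through the four highest-weight vectors $\xi,\eta,\xi',\eta'$. Iterating Lemma~\ref{lem:astuce} four times recasts $\Tr(F_\gamma G_\gamma)$ in the form
\[
\bigl|\kappa^{l,k}_\gamma\kappa^{n,l'}_\gamma\bigr|^2 \cdot \Bigl(\prod \alpha\Bigr)\cdot \Tr(P_{\gamma'}\,f_{13}),
\]
for some $\gamma' = \gamma - O_{k,n}(1)$ and some $f\in\B(H_{a'}\otimes H_{c'})$ with $a',c'$ bounded by $\max(k,n)$. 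Crucially, $f$ inherits the rank-one factor $\eta\xi^*$ from the reduction on the $n$-side, so that $\Tr(f)$ is a scalar multiple of $\langle\eta,\xi\rangle$ and hence vanishes.

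Now Corollary~\ref{cor:partialtrace} applies to $f$ and gives $|\Tr(P_{\gamma'}f_{13})|\leqslant D_{a',c'}\|f\|_\HS$, with $D_{a',c'}$ depending only on $k,n$ and $\|f\|_\HS$ controlled by the norms of the four unit vectors. Combining with $|\kappa^{l,k}_\gamma|,|\kappa^{n,l'}_\gamma|\leqslant B$ from Lemma~\ref{lem:kappa} (applicable since $a\leqslant k$ and $b\leqslant n$), $|\alpha|\leqslant 1$ from Lemma~\ref{lem:astuce}, and $1/d_\gamma\leqslant q^\gamma/(1-q^2)$, each summand is at most $K'q^\gamma$. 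Since $\gamma\geqslant\max(l,l')-\max(k,n)$ for every admissible $\gamma$, and the number of summands is uniformly bounded, the estimate $Kq^{\max(l,l')}$ follows. The main obstacle is the bookkeeping in the middle step: iterating Lemma~\ref{lem:astuce} correctly and identifying an $f$ whose trace is visibly proportional to $\langle\eta,\xi\rangle$; Lemma~\ref{lem:productprojections} is likely needed as well to control auxiliary cross-terms from products of projections at different heights. Boundary cases $l<k$ or $l'<n$ concern only finitely many pairs and are absorbed into the final constant $K$.
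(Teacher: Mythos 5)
Your proposal follows essentially the same route as the paper's proof: expansion of the scalar product into a sum over the common irreducible components $\gamma$ of $l\otimes k$ and $n\otimes l'$ with a $1/d_\gamma$ prefactor, reformulation of each term as a trace, extraction of a tracelessness condition from $\xi\perp\eta$, reduction via Lemma~\ref{lem:astuce} and Lemma~\ref{lem:productprojections} to a single Jones--Wenzl projection so that Corollary~\ref{cor:partialtrace} applies, and the same final counting ($\gamma\geqslant\max(l,l')-\max(k,n)$, boundedly many admissible $\gamma$). Your packaging of each summand as $d_\gamma^{-1}\Tr_{H_\gamma}(F_\gamma G_\gamma)$ and the observation that $\Tr(F_\gamma)=c\,\langle\eta,\xi\rangle$ because $(\id_n\otimes\Tr_{l'})(P^{n,l'}_\gamma)$ is a self-intertwiner of the irreducible $u^n$ are in fact cleaner than the paper's explicit tensor-leg computations. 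One caveat: the reduction you display as an exact identity is not one --- in the paper it holds only after replacing several products of projections at different heights by single projections, each replacement costing an error $O(q^{c})$ with $c\geqslant\max(l,l')-t$ controlled by Lemma~\ref{lem:productprojections} (these errors, together with the cross-terms $S^m_{\mu,\mu'}$, must be summed and bounded separately), and the constants produced by Lemma~\ref{lem:astuce} enter as \emph{inverses} $\alpha^{-1}$, so that it is the lower bound $C_{p,q}\leqslant\vert\alpha_{p,q}^{n}\vert$ rather than the upper bound $\vert\alpha_{p,q}^{n}\vert\leqslant 1$ that is actually needed.
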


\begin{proof}
  The proof will consist of the following steps:
  \begin{enumerate}
  \item[\textbf{1.}] computation of the scalar product as a sum $S = \sum S_m$
    in the category of representations,
  \item[\textbf{2.}] simplification of $S_m$ into $T_m$,
  \item[\textbf{3.}] expression of $T_m$ as a trace,
  \item[\textbf{4.}] application of Lemma~\ref{lem:astuce} to reduce the trace,
  \item[\textbf{5.}] application of Proposition~\ref{prop:partialtrace} to
    estimate the trace,
  \item[\textbf{6.}] backtracking of all approximations.
  \end{enumerate}

  \bigskip
  \noindent \textbf{Step 1.} We compute the products and the scalar product
  using the formul\ae{} given in Subsection \ref{subsec:irreducible}:
  \begin{align}
    S &= \left\langle \chi_{l}u_{\xi'\eta'}^{k}, u_{\xi\eta}^{n}\chi_{l'}\right\rangle = \sum_{i=1}^{d_{l}}\sum_{j=1}^{d_{l'}} \left\langle u_{e_{i}e_{i}}^{l}u_{\xi'\eta'}^{k}, u_{\xi\eta}^{n} u_{e_{j}e_{j}}^{l'} \right\rangle \nonumber \\
    & = \sum_{i=1}^{d_{l}}\sum_{j=1}^{d_{l'}}\sum_{m=0}^{+\infty}
    \left\langle u^{m}_{v_{m}^{l, k *}(e_{i}\otimes \xi'), v_{m}^{l, k *}(e_{i}\otimes\eta')}, u^{m}_{v_{m}^{n, l'*}(\xi\otimes e_{j}), v_{m}^{n, l' *}(\eta\otimes e_{j})} \right\rangle\nonumber \\
    & = \sum_{i=1}^{d_{l}}\sum_{j=1}^{d_{l'}}\sum_{m=0}^{+\infty} \frac 1{d_{m}}
    \left\langle v_{m}^{l, k*}(e_{i}\otimes\xi'), v_{m}^{n, l' *}(\xi\otimes
      e_{j})\right\rangle
    \left\langle v_{m}^{n, l'*}(\eta\otimes e_{j}), v_{m}^{l, k*}(e_{i}\otimes\eta')\right\rangle \nonumber \\
    & = \sum_{i=1}^{d_{l}}\sum_{j=1}^{d_{l'}}\sum_{m=0}^{+\infty} \frac 1{d_{m}}
    \left\langle v_{m}^{l, k*}(e_{i}\otimes\xi'), v_{m}^{n, l'*}(\xi\otimes
      e_{j})\right\rangle
    \left\langle v_{m}^{k, l*}(\overline \eta' \otimes\overline e_{i}), v_{m}^{l', n *}(\overline e_{j} \otimes \overline\eta) \right\rangle \nonumber \\
    & = \sum_{m=0}^{+\infty}\frac{1}{d_{m}} \left\langle \left(v_{m}^{l,
          k}\otimes v_{m}^{k, l}\right)^{*}\circ\left(\Sigma\otimes
        \Sigma\right)(\xi'\otimes t_{l}\otimes \overline{\eta}'),
      \left(v_{m}^{n, l'}\otimes v_{m}^{l', n}\right)^{*}(\xi\otimes
      t_{l'}\otimes \overline{\eta})\right\rangle.
    \label{eq:Sdefinition}
  \end{align}
  Let us denote by $S^{m}$ the $m$-th term in brackets in \eqref{eq:Sdefinition}
  and note that it can only be non-zero if $u^{m}$ is a subrepresentation of
  both $u^{k}\otimes u^{l}$ and $u^{n}\otimes u^{l'}$. This means that there are
  integers $a$ and $b$ such that
  \begin{equation*}
    l+k = m+2a \text{ and } n+l' = m+2b.
  \end{equation*}
  Note that $l-n+b-a = l'-k+a-b$ and let us denote by $c$ this number. To
  estimate $S^{m}$, we will use the explicit formula for the intertwiners given
  just before Lemma \ref{lem:kappa}:
  \begin{align*}
    \left(v_{m}^{l, k}\right)^{*} = \kappa_m^{kl} P_{m}(\id_{l-a}\otimes
    t_{a}^{*}\otimes \id_{k-a})
    &,~~ \left(v_{m}^{k, l}\right)^{*} = \kappa_m^{kl} P_{m}(\id_{k-a}\otimes t_{a}^{*}\otimes \id_{l-a}), \\
    \left(v_{m}^{l', n}\right)^{*} = \kappa_m^{nl'} P_{m}(\id_{l'-b}\otimes
    t_{b}^{*}\otimes \id_{n-b}) &,~~ \left(v_{m}^{n, l'}\right)^{*} =
    \kappa_m^{nl'}P_{m}(\id_{n-b}\otimes t_{b}^{*}\otimes \id_{l'-b}).
  \end{align*}
  so that \eqref{eq:Sdefinition} becomes:
  \begin{align*}
    S^{m} = \left(\kappa_m^{kl}\right)^{2} \left(\kappa_m^{nl'}\right)^{2} &
    \left\langle (P_{m}\otimes P_{m})\left(\id_{l-a}\otimes t_{a}^{*}\otimes \id_{k-a}^{\otimes 2}\otimes t_{a}^{*}\otimes \id_{l-a}\right)(\Sigma\otimes\Sigma)(\xi'\otimes t_l \otimes \overline{\eta}')\right., \\
    &~ \left.(P_{m}\otimes P_{m})\left(\id_{n-b}\otimes
        t_{b}^{*}\otimes\id_{l'-b}^{\otimes 2}\otimes t_{b}^{*}\otimes
        \id_{n-b}\right)(\xi\otimes t_{l'}\otimes \overline{\eta})\right\rangle.
  \end{align*}

  \bigskip
  \noindent\textbf{Step 2.} Let us set, for $0\leqslant \mu, \mu'\leqslant m$,
  \begin{align*}
    S^{m}_{\mu, \mu'} = & \left\langle(P_{\mu}^{l-a, k-a}\otimes P_{\mu'}^{k-a,
        l-a})\left(\id_{l-a}\otimes t_{a}^{*}\otimes \id_{k-a}^{\otimes 2}
        \otimes t_{a}^{*}\otimes \id_{l-a}\right)(\Sigma\otimes\Sigma)(\xi'\otimes t_l \otimes \overline{\eta}')\right., \\
    &~~ \left.(P_{\mu}^{n-b, l'-b}\otimes
      P_{\mu'}^{l'-b,n-b})\left(\id_{n-b}\otimes
        t_{b}^{*}\otimes\id_{l'-b}^{\otimes 2}\otimes t_{b}^{*}\otimes
        \id_{n-b}\right)(\xi\otimes t_{l'}\otimes \overline{\eta})\right\rangle
  \end{align*}
  so that $S^{m} = (\kappa_m^{kl}\kappa_m^{nl'})^{2}S^{m}_{m, m}$. If $\mu$ or
  $\mu'$ is strictly less than $m$, then we know by Lemma
  \ref{lem:productprojections} that there is a constant $A$ depending only on
  $N$ such that either
  \begin{equation*}
    \left\|P_{\mu}^{l-a, k-a}P_{\mu}^{n-b, l'-b}\right\| \leqslant Aq^{l-a - (n-b)} \text{ or } 
    \left\|P_{\mu'}^{k-a, l-a}P_{\mu'}^{l'-b, n-b}\right\| \leqslant Aq^{l-a - (n-b)}.
  \end{equation*}
  This gives the bound $\vert S_{\mu, \mu'}^{m}\vert \leqslant A
  \|t_l\|\|t_{l'}\| \|t_a\|^2 \|t_b\|^2 q^{c} =
  A\sqrt{d_{l}}\sqrt{d_{l'}}d_ad_bq^{c}$ which will be used in the end to
  estimate $S^{m}$. Let us expand back the vectors $t_{l} = \sum
  e_{t}^{l}\otimes\overline{e}_{t}^{l}$ and $t_{l'} = \sum
  e_{s}^{l'}\otimes\overline{e}_{s}^{l'}$ and introduce
  \begin{align*}
    T^{m} = \sum_{t=1}^{d_{l}}\sum_{s=1}^{d_{l'}} &
    \left\langle\left(\id_{l-a}\otimes t_{a}^{*}\otimes \id_{k-a}^{\otimes
          2}\otimes t_{a}^{*}\otimes
        \id_{l-a}\right)(e_{t}^{l\phantom{'}}\otimes \xi'\otimes
      \overline{\eta}'
      \otimes\overline{e}_{t}^{l})\right., \\
    &~~ \left.\left(\id_{n-b}\otimes t_{b}^{*}\otimes\id_{l'-b}^{\otimes
          2}\otimes t_{a}^{*}\otimes \id_{n-b})\right)(\xi\otimes
      e_{s}^{l'}\otimes \overline{e}_{s}^{l'}
      \otimes\overline{\eta})\right\rangle
  \end{align*}
  so that $S^{m} = (\kappa_{m}^{kl})^{2}(\kappa_{m}^{nl'})^{2} (T^{m} - \sum
  S^{m}_{\mu, \mu'})$, where the sum runs over all $(\mu, \mu')\neq(m, m)$.

  \bigskip \textbf{\noindent Step 3.} The problem is now to estimate $T^{m}$,
  using the following tensor decomposition of the vectors $\xi$, $\eta$, $\xi'$
  and $\eta'$ in Sweedler's notation:
  \begin{align*}
    \xi = \sum \xi_{(1)}\otimes \xi_{(2)} & \in H_{n-b}\otimes H_{b}, \\
    \eta = \sum \eta_{(1)}\otimes \eta_{(2)} & \in H_{n-b}\otimes H_{b}, \\
    \xi' = \sum \xi'_{(1)}\otimes \xi'_{(2)} & \in H_{a}\otimes H_{k-a}, \\
    \eta' = \sum \eta'_{(1)}\otimes \eta'_{(2)} & \in H_{a}\otimes H_{k-a}.
  \end{align*}
  Because $t_{a}^{*}(x\otimes \overline{y}) = y^{*}(x)$, we get
  \begin{align*}
    T^{m} & = \sum_{t=1}^{d_{l}}\sum_{s=1}^{d_{l'}}\sum
    \left\langle\left(\id_{l-a}\otimes \overline{\xi}_{(1)}^{\prime *}\right)(e_{t}^{l})\otimes \xi_{(2)}'\otimes \overline{\eta}_{(2)}'\otimes \left(\eta_{(1)}^{\prime *}\otimes \id_{l-a}\right)(\overline{e}^{l\phantom{'}}_{t})\right., \\
    & \hspace{2.5cm}\left.\xi_{(1)}\otimes\left(\overline{\xi}_{(2)}^{*}\otimes \id_{l'-b}\right)(e_{s}^{l'})\otimes \left(\id_{l'-b}\otimes \eta_{(2)}^{*}\right)(\overline{e}^{l'}_{s})\otimes \overline{\eta}_{(1)}\right\rangle \\
    & = \sum_{t=1}^{d_{l}}\sum_{s=1}^{d_{l'}}\sum
    \left\langle\left(\xi_{(1)}^{*}\otimes \id_{l-a-(n-b)}\otimes \overline{\xi}_{(1)}^{\prime *}\right)(e_{t}^{l})\otimes \left(\eta_{(1)}^{\prime *}\otimes \id_{l-a-(n-b)}\otimes \overline{\eta}_{(1)}^{*}\right)(\overline{e}_{t}^{l\phantom{'}})\right., \\
    & \hspace{2.5cm} \left.\left(\overline{\xi}_{(2)}^{*}\otimes \id_{l'-b-(k-a)}\otimes \xi_{(2)}^{\prime *}\right)(e_{s}^{l'})\otimes \left(\overline{\eta}_{(2)}^{\prime *}\otimes \id_{l'-b-(k-a)}\otimes \eta_{(2)}^{*}\right)(\overline{e}_{s}^{l'})\right\rangle \\
    & = \sum_{t=1}^{d_{l}}\sum_{s=1}^{d_{l'}}\sum
    \left\langle\left(\xi_{(1)}^{*}\otimes \id_{c}\otimes \overline{\xi}_{(1)}^{\prime *}\right)(e_{t}^{l}), \left(\overline{\xi}_{(2)}^{*}\otimes \id_{c}\otimes \xi_{(2)}^{\prime *}\right)(e_{s}^{l'})\right\rangle \\
    & \hspace{1.9cm} \times \left\langle\left(\eta_{(1)}^{\prime *}\otimes
        \id_{c}\otimes \overline{\eta}_{(1)}^{*}\right)(\overline{e}_{t}^{l}),
      \left(\overline{\eta}_{(2)}^{\prime *}\otimes \id_{c}\otimes
        \eta_{(2)}^{*}\right)(\overline{e}_{s}^{l'}) \right\rangle.
  \end{align*}
  The properties of conjugate vectors imply that
  \begin{equation*}
    \left\langle\left(\eta_{(1)}^{\prime *}\otimes \id_{c}\otimes \overline{\eta}_{(1)}^{*}\right)(\overline{e}_{t}^{l}), \left(\overline{\eta}_{(2)}^{\prime *}\otimes \id_{c}\otimes \eta_{(2)}^{*}\right)(\overline{e}_{s}^{l'})\right\rangle = 
    \left\langle\left(\overline{\eta}_{(2)}^{*}\otimes \id_{c}\otimes \eta_{(2)}^{\prime *}\right)(e_{s}^{l'}), \left(\eta_{(1)}^{*}\otimes \id_{c}\otimes \overline{\eta}_{(1)}^{\prime *}\right)(e_{t}^{l})\right\rangle.
  \end{equation*}
  Making this change in the last expression of $T^{m}$ and using the fact that
  $\sum\langle x, Se_{s}^{l'}\rangle\langle Te_{s}^{l'}, y\rangle = \langle x,
  SP_{l'}T^{*}y\rangle$ enables to simplify the sum over $s$, we obtain
  \begin{align*}
    T^{m} & = \sum\sum_{t=1}^{d_{l}} \left\langle\left(\xi_{(1)}^{*}\otimes
        \id_{c}\otimes \overline{\xi}_{(1)}^{\prime *}\right)(e_{t}^{l}),
    \right.  \left.\left(\overline{\xi}_{(2)}^{*}\otimes
        \id_{c}\otimes\xi_{(2)}^{\prime *}\right) P_{l'}
      \left(\overline{\eta}_{(2)}\otimes\id_{c}\otimes
        \eta_{(2)}^{\prime}\right) \left(\eta_{(1)}^{*}\otimes\id_{c}\otimes
        \overline{\eta}_{(1)}^{\prime *}\right)(e_{t}^{l})
    \right\rangle \\
    & = \sum\Tr_{\otimes
      l}\left[P_{l}\left(\xi_{(1)}\overline{\xi}_{(2)}^{*}\otimes
        \id_{c}\otimes\overline{\xi}'_{(1)}\xi_{(2)}^{\prime
          *}\right)P_{l'}\left(\overline{\eta}_{(2)}\eta_{(1)}^{*}\otimes\id_{c}\otimes
        \eta_{(2)}'\overline{\eta}_{(1)}^{\prime *}\right)\right],
  \end{align*}
  where $\Tr_{\otimes l}$ denotes the non-normalized trace on $H_{1}^{\otimes
    l}$.

  \bigskip
  \noindent\textbf{Step 4.} We cannot apply Corollary \ref{cor:partialtrace} to
  $T^{m}$ because there are two highest weight projections instead of one. We
  will therefore use Lemma \ref{lem:astuce} to reduce the problem to a case
  where Corollary \ref{cor:partialtrace} applies. Let us first simplify the
  notation by setting
  \begin{align*}
    f & = \sum \xi_{(1)}\overline{\xi}_{(2)}^{*} : H_{b} \rightarrow H_{n-b}, \\
    g & = \sum \overline{\eta}_{(2)}\eta_{(1)}^{*} : H_{n-b} \rightarrow H_{b}, \\
    f' & = \sum \overline{\xi}_{(1)}^{\prime}\xi_{(2)}^{\prime *} : H_{k-a} \rightarrow H_{a}, \\
    g' & = \sum \eta'_{(2)}\overline{\eta}_{(1)}^{\prime *} : H_{a} \rightarrow
    H_{k-a}.
  \end{align*}
  By Lemma \ref{lem:productprojections}, $\|(\id_{b}\otimes
  P_{l'-b})(P_{l'-k+a}\otimes \id_{k-a}) - P_{l'}\| \leqslant Aq^{c}$ and
  $\|(P_{l-a}\otimes \id_{a})(\id_{n-b}\otimes P_{l-n+b}) - P_{l}\| \leqslant
  Aq^{c}$, so that it is enough to study
  \begin{align*}
    Y^{m} & = \Tr_{\otimes l}\left[
      (P_{l-a}\otimes \id_{a})(\id_{n-b}\otimes P_{l-n+b})(f\otimes \id_{c}\otimes f')\right. \\
    & \hspace{1.4cm} \left.(\id_{b}\otimes P_{l'-b})(P_{l'-k+a}\otimes \id_{k-a})(g\otimes \id_{c}\otimes g')\right] \\
    & = \Tr_{\otimes l}\left[
      (P_{l-a}\otimes \id_{a})(f\otimes \id_{l-n+b})(\id_{b}\otimes P_{l-n+b})(\id_{l-n+2b-a}\otimes f')\right. \\
    & \hspace{1.4cm}  \left.(\id_{b}\otimes P_{l'-b})(\id_{l'-k+a}\otimes g')(P_{l'-k+a}\otimes \id_{a})(g\otimes \id_{l-n+b})\right] \\
    & = \Tr_{\otimes l-n+2b}\left[
      (\id_{b}\otimes P_{l-n+b})(\id_{l-n+2b-a}\otimes f')(\id_{b}\otimes P_{l'-b})(\id_{l'-k+a}\otimes g')\right. \\
    & \hspace{2.3cm} \left.(P_{l'-k+a}\otimes \id_{a})(g\otimes
      \id_{l-n+b})(P_{l-a}\otimes \id_{a})(f\otimes \id_{l-n+b})\right].
  \end{align*}
  We now apply Lemma \ref{lem:astuce} to $f'$ (with $p=k-a$ and $q=a$) and $g$
  (with $p=n-b$ and $q=b$):
  \begin{align*}
    P_{l-n+b}(\id_{c}\otimes f')P_{l'-b} & = \sum (\alpha_{k-a, a}^{l'+a-b})^{-1}(\id_{l-n+b}\otimes \xi^{\prime (1) *})P_{l'+a-b}(\id_{l'-b}\otimes \overline{\xi}^{\prime (2)}) \\
    P_{l'-k+a}(g\otimes\id_{c})P_{l-a} & = \sum (\alpha_{n-b,
      b}^{l-a+b})^{-1}(\eta^{(2)
      *}\otimes\id_{l'-k+a})P_{l+b-a}(\overline{\eta}^{(1)}\otimes \id_{l-a})
  \end{align*}
  where $\xi' = \sum \xi^{\prime(1)}\otimes \xi^{\prime(2)}\in H_{k-a}\otimes
  H_{a}$ and $\eta = \sum \eta^{(1)}\otimes \eta^{(2)}\in H_{b}\otimes
  H_{n-b}$. This yields
  \begin{align*}
    Y^{m} & = \beta\sum\Tr_{\otimes l-n+2b}\left[
      (\id_{l-n+2b}\otimes \xi^{\prime (1) *})(\id_{b}\otimes P_{l'+a-b})(\id_{l'}\otimes \overline{\xi}^{\prime (2)})(\id_{l'-k+a}\otimes g')\right. \\
    &  \hspace{3.3cm} \left.(\eta^{(2) *}\otimes \id_{l'-k+2a})(P_{l+b-a}\otimes \id_{a})(\overline{\eta}^{(1)}\otimes \id_{l})(f\otimes \id_{l-n+b})\right] \\
    & = \beta\sum\Tr_{\otimes l-n+2b}\left[
      (\eta^{(2) *}\otimes\id_{l'-k+2a}\otimes \xi^{\prime (1) *})(\id_{n}\otimes P_{l'+a-b})\right. \\
    & \hspace{3.3cm}   \left.(P_{l+b-a}\otimes \id_{k})(\overline{\eta}^{(1)}\otimes f\otimes \id_{l-n+b}\otimes g'\otimes \overline{\xi}^{\prime (2)})\right] \\
    & = \beta\Tr_{\otimes c+k+n}\left[(\id_{n}\otimes
      P_{l'+a-b})(P_{l+b-a}\otimes \id_{k})(\tilde{g}\otimes f\otimes
      \id_{c}\otimes g'\otimes \tilde{f}')\right]
  \end{align*}
  where $\beta = (\alpha_{k-a, a}^{l'+a-b}\alpha_{n-b, b}^{l-a+b})^{-1}$ and
  \begin{align*}
    \tilde{f}' & = \sum \overline{\xi}^{\prime (2)}\xi^{\prime (1) *} : H_{k-a} \to H_{a}, \\
    \tilde{g} & = \sum \overline{\eta}^{(1)}\eta^{(2) *} : H_{n-b} \to H_{b}.
  \end{align*}
  To conclude the computation, we use again Lemma~\ref{lem:productprojections}
  to get the following bound:
  \begin{equation*}
    \left\|(\id_{n}\otimes P_{l'+a-b})(P_{l+b-a}\otimes \id_{k}) - P_{l+k+b-a}\right\| \leqslant Aq^{c},
  \end{equation*}
  enabling us to eventually reduce the problem to the study of
  \begin{equation*}
    Z^{m} = \beta\Tr_{\otimes c+k+n}\left[ 
      P_{l+k+b-a}(\tilde{g}\otimes f\otimes \id_{c}\otimes g'\otimes \tilde{f}')
    \right].
  \end{equation*}

  \bigskip
  \noindent \textbf{Step 5.} We will now apply
  Corollary~\ref{cor:partialtrace}. The orthogonality assumption in the
  statement of the present theorem can by rephrased as the vanishing of trace
  required for Corollary~\ref{cor:partialtrace}. We have indeed
  \begin{align*}
    \Tr(P_{n}(\tilde{g}\otimes f)) &= (\Tr_{n-b}\otimes\Tr_b)
    \left[P_{n}((\id_{b}\otimes \eta^{*})(t_{b}\otimes
      \id_{n-b})\otimes(\id_{n-b}\otimes
      t_{b}^{*})(\xi\otimes \id_{b}))\right] \\
    &= \Tr_{n-b}  \left[(\id_{n-b}\otimes t_b^*)(P_{n}\otimes\id_b) \right. \\
    & \makebox[3cm]{} \left. ((\id_{b}\otimes \eta^{*})(t_{b}\otimes
      \id_{n-b})\otimes(\id_{n-b}\otimes t_{b}^{*})(\xi\otimes \id_{b})
      \otimes\id_b)
      (\id_{n-b}\otimes t_b)\right] \\
    &= \Tr_{n-b}\left[(\id_{n-b}\otimes
      t_b^*)(P_n\otimes\id_b)((\id_b\otimes\eta^*)(t_b\otimes\id_{n-b})\otimes
      \xi)\right] \\
    &= (t_{n-b}^*\otimes t_b^*)(\id_{n-b}\otimes P_n\otimes\id_b)
    (\id_{n-b}\otimes(\id_b\otimes\eta^*)(t_b\otimes\id_{n-b})\otimes
    \xi) t_{n-b} \\
    &= (t_{n-b}^*\otimes t_b^*)(\id_{n-b}\otimes P_n\otimes\id_b)
    ((\id_{n}\otimes\eta^*)t_n \otimes \xi) \\
    & = (t_{n-b}^*\otimes t_b^*)(\id_{n-b}\otimes P_n\otimes\id_b) (\bar\eta
    \otimes \xi).
  \end{align*}
  Since the only intertwiner from $\bar H_n\otimes H_n$ to $\C$, up to a scalar,
  is $\bar\eta\otimes\xi \mapsto t_n^*(\bar\eta\otimes\xi) =
  \langle\xi,\eta\rangle$, this shows that $\Tr(P_{n}(\tilde{g}\otimes f)) = 0$.
  Besides, we have the estimate
  \begin{equation*}
    \|P_{n}(\tilde{g}\otimes f)P_{n}\|_{\HS} \leqslant \|\tilde{g}\otimes f\|_{\HS} = \|\xi\|\|\eta\| = 1
  \end{equation*}
  and similarly $\|P_{k}(g'\otimes \tilde{f}')P_{k}\|_{\HS} \leqslant 1$. Thus,
  Corollary \ref{cor:partialtrace} applies to $F = P_{n}(\tilde{g}\otimes
  f)P_{n} \otimes P_{k}(g'\otimes \tilde{f}')P_{k}$ and yields $\vert Z^{m}\vert
  \leqslant \beta D_{n, k}$.

  \bigskip
  \noindent \textbf{Step 6.}  Now we can rewind the successive approximations to
  bound $S^{m}$. In the remainder of this proof, the symbols $K_{i}$ will denote
  numbers possibly depending on $n$ and $k$, but not on $m$, $l$ and
  $l'$. Recall that $a$, $b$, $c$ are defined in terms of $m$, $l$ and $l'$. To
  bound $T^{m}-Z^{m}$, we use the rough estimate $\vert\Tr_{H}(X)\vert\leqslant
  \dim(H)\|X\|$ which holds for any Hilbert space $H$ and any $X\in\B(H)$. Let
  us note that the operator norms of $f$, $g$, $f'$, $g'$ are dominated by their
  Hilbert-Schmidt norms, which are equal to $1$. However, the space over which
  we take the trace is $H_{1}^{\otimes l}$, which is too big. We therefore take
  advantage of the projections inside the trace to restrict to $H_{b}\otimes
  H_{l'-b}$ and $H_{l-a}\otimes H_{a}$ when passing from $T^{m}$ to $Y^{m}$ and
  to $H_{n}\otimes H_{l'+a-b}$ when passing from $Y^{m}$ to $Z^{m}$. This
  yields:
  \begin{equation*}
    \vert T^{m}\vert \leqslant 
    \vert T^{m} - Y^{m}\vert + \vert Y^{m}-Z^{m}\vert + \vert Z^{m}\vert \leqslant 
    A(d_{b}d_{l'-b} + d_{a}d_{l-a} + \beta d_{n}d_{l'+a-b})q^{c}+\beta D_{n, k}.
  \end{equation*}
  By the second part of Lemma \ref{lem:astuce}, $\beta D_{n, k}$ is bounded by
  $C_{k-a, a}^{-1}C_{n-b, b}^{-1}D_{n, k}$. Because $a\leqslant k$ and
  $b\leqslant n$ take only a finite number of values when $n$ and $k$ are fixed,
  all these constants can be bounded by a constant $K_0$. We can also bound the
  coefficient of $q^{c}$ by
  \begin{equation*}
    A(d_{n}d_{l'}+d_{k}d_{l}+\beta d_{n}d_{l'+k})\leqslant K_{1}q^{-\max(l, l')}.
  \end{equation*}

  Secondly, we have to consider the sum of the $\vert S^{m}_{\mu, \mu'}\vert$'s
  for $(\mu, \mu')\neq (m, m)$. Note that this term is non-zero only if $\mu$
  and $\mu'$ are subrepresentations respectively of $(l-a)\otimes (k-a)$ and
  $(n-b)\otimes (l'-b)$. Thus, there are at most $\min(k-a, l-a)\times\min(n-b,
  l'-b)\leqslant kn$ such terms and each of them is bounded by
  $A\sqrt{d_{l}}\sqrt{d_{l'}}d_ad_bq^{c}$, as explained at the beginning of the
  proof. Also recall from Lemma \ref{lem:kappa} that $\kappa_{m}^{kl}$ and
  $\kappa_{m}^{nl'}$ are respectively bounded by $B_{a}$ and $B_{b}$, and since
  $a$, $b$ take only a finite number of values (determined by $k$ and $n$), they
  are bounded by a constant $K_{2}$. Summing up, we have
  \begin{align*}
    \vert S^{m}\vert & \leqslant K_{2}^{4} \vert T^{m}\vert +  K_{2}^{4}knd_kd_nA\sqrt{d_{l}}\sqrt{d_{l'}}q^{c} \\
    & \leqslant K_{2}^{4} K_{1}q^{c-\max(l,l')} + K_{2}^{4}K_{0} +
    K_{3}q^{c-\max(l,l')}.
  \end{align*}
  Let $t = \min(n+a-b, k+b-a)$. Then, $c\geqslant \max(l, l')-t$ and thus we
  have proved that $\vert S^{m}\vert$ is bounded by a constant $K_{4}$
  independent of $m$, $l$ and $l'$.

  To obtain our estimate for $S$, we now have to sum the $S^{m}$'s. Note that
  for $S^{m}$ to be non-zero, $m = k+l-2a = n+l'-2b$ must be a subrepresentation
  of both $l\otimes k$ and $n\otimes l'$. There are at most $\min(k, n)$ such
  $m$'s and they moreover satisfy $m\geqslant \max(l-k, l'-n)$, so that $d_{m}
  \geqslant K_{5} q^{-\max(l,l')}$ and we can write
  \begin{equation*}
    \vert S\vert \leqslant \sum_{m=0}^{+\infty}\frac{1}{d_{m}}\vert S^{m}\vert\leqslant \min(k, n) K_{5}q^{\max(l,l')}K_{4}.
  \end{equation*}
\end{proof}

\section{The radial subalgebra}\label{sec:radial}

We are now ready to prove the announced results on the radial subalgebra. Before
going into the proofs, we recall the definition of this subalgebra as well as
some of its basic properties.

\begin{de}
  For any finite-dimensional representation $v$ of a compact quantum group $\G$,
  the \emph{character} of $v$ is the element $\chi_{v} = (\id\otimes \Tr)(v)\in
  C(\G)$. This element depends only on the equivalence class of $v$.
  
  The \emph{radial subalgebra} $A \subset L^{\infty}(O_{N}^{+})$ is the von
  Neumann subalgebra generated by the fundamental character $\chi_{1} =
  \chi_{u}$, where $u$ is the matrix of generators.
\end{de}

Note that the radial subalgebra was also used as a sub-C*-algebra $A_{f}$ of the
full C*-algebra $C(O_{N}^{+})$ by M.~Brannan in
\cite{brannan2011approximation}. The spectrum of $\chi_{1}$ in $C(O_{N}^{+})$ is
$\left[-N, N\right]$, whereas it is $[-2,2]$ in $C_{\red}(O_{N}^{+})$ and
$L^{\infty}(O_{N}^{+})$. In the full case, the evaluation functionals
$\mathrm{ev}_{t} : A_{f} \to \C$ at $t \in \left[-N,N\right]$ induce completely
positive maps $T_{t} : L^{\infty}(O_{N}^{+}) \to L^{\infty}(O_{N}^{+})$ which
approximate the identity as $t \to N$. This allowed M.~Brannan to prove that
$L^{\infty}(O_{N}^{+})$ has the Haagerup approximation property.

The terminology is justified by the following analogy with the "classical case"
of the free group factors $\Ll(\F_{N})$. More precisely, denote the standard
generators of $\F_{N}$ by $a_{i}$ and consider
\begin{equation*}
  u = \mathrm{diag}(a_{1}, \dots, a_{N}, a_{1}^{-1}, \dots, a_{N}^{-1})\in\Ll(\F_{N})\otimes \B(\C^{2N}).
\end{equation*}
This is indeed a representation of the compact quantum group dual to $\F_{N}$,
we put $\chi_{1} = \chi_{u} = \sum_{i=1}^{N} (a_{i} + a_{i}^{-1}) \in
\Ll(\F_{N})$ and we define the radial subalgebra $A \subset \Ll(\F_{N})$ as the
von Neumann subalgebra generated by $\chi_{1}$. If we consider, for
$x\in\Ll(\F_{N})$ and $g\in \F_{N}$, the coefficient $x_{g} = \langle x,
g\rangle = \tau(g^{*}x)$ with respect to the standard trace $\tau$, then $x$
belongs to $A$ if and only if the function $(g\mapsto x_{g})$ is \emph{radial},
i.e. $x_{g}$ only depends on the word length of $g$.

The fusion rules of $O_{N}^{+}$ imply that $\chi_{1}\chi_{n} = \chi_{n}\chi_{1}
= \chi_{n+1} + \delta_{n>0}\chi_{n-1}$, so that the radial subalgebra is abelian
and generated as a weakly closed subspace by the characters $(\chi_{n})_{n\in
  \N}$. Moreover, it was proved in \cite{banica1996theorie} that the spectrum of
$\chi_{1}$ in $L^{\infty}(O_{N}^{+})$ is $[-2, 2]$ and that the restriction of
the Haar state is the semi-circle law. More precisely, one can identify $A$ with
$L^{\infty}(\left[-2,2\right])$ via the functional calculus $f \mapsto
f(\chi_{1})$ and the scalar product induced by the Haar state is computed via
\begin{equation*}
  \left\langle f(\chi_{1}), g(\chi_{1})\right\rangle =  \frac{1}{2\pi}\int_{-2}^{2}f(s)\overline{g(s)}\sqrt{4-s^{2}}ds.
\end{equation*}
In particular, the radial subalgebra is diffuse. The characters $\chi_{n}$
correspond to dilated Chebyshev polynomials of the second kind: $\chi_{n}(X) =
T_{n}(X) = U_{n}(X/2)$ where $T_0 = 1$, $T_1 = X$ and $T_{1}T_{n} = T_{n+1} +
T_{n-1}$ if $n\geqslant 1$.

Since $L^{\infty}(O_{N}^{+})$ is a finite von Neumann algebra, there is a unique
$h$-preserving conditional expectation $\E : M\rightarrow A$, which is
explicitly given by
\begin{equation}\label{eq:expectation}
  \E(u^{n}_{\xi \eta}) = \frac{\langle \xi, \eta\rangle}{d_{n}}\chi_{n}.
\end{equation}
We shall denote by $A^{\perp}$ the subspace $\{z\in M, \E(z) = 0\}$, which by
Equation \eqref{eq:expectation} is the weak closure of the linear span of
coefficients $u^{n}_{\xi \eta}$ with $\langle \xi, \eta\rangle = 0$.

As mentioned in the preliminaries, all the results of this article apply in fact
to general free orthogonal quantum groups $O^+(Q)$ \emph{of Kac type}, i.e. such
that $Q$ is a scalar multiple of a unitary matrix. The situation for non-Kac
type free orthogonal quantum groups is however quite different. First recall
that $L^{\infty}(O^+(Q))$ is in that case a type III factor, at least for some
values of the parameter $Q$ (see \cite{vaes2007boundary}). More precisely the
Haar state has then a non-trivial modular group, which is given on the
generating matrix $u \in L^{\infty}(\G)\otimes\B(\C^{N})$ by
\begin{equation*}
  (\sigma_{t}\otimes\id)(u) = (\id\otimes {}^t(Q^{*}Q)^{-it})u(\id\otimes {}^t(Q^{*}Q)^{-it}),
\end{equation*}
where we assume $Q$ to be normalized so that $\Tr(Q^{*}Q) =
\Tr((Q^{*}Q)^{-1})$. In particular, it is clear that $\sigma_{t}(\chi_{1})$ does
not belong to $A$ for all $t$ unless $Q^{*}Q \in\C I_{N}$, and this implies that
there exists no $h$-invariant conditional expectation onto $A$ in the non-Kac
case. It might even be that there exists no normal conditional expectation onto
$A$ at all. On the other hand, as far as we know all the available tools for the
study of abelian subalgebras require the presence of a conditional expectation.

Let us also comment on the $N = 2$ case, where the tools developed in the
previous section break down. If we restrict to Kac type free orthogonal quantum
groups, there are only two examples at $N=2$ up to isomorphism, namely $SU(2)$
and $SU_{-1}(2)$. In the first case $C(SU(2))$ is commutative so that $A$ is
clearly not maximal abelian, and in fact $A$ is not maximal abelian either in
the second case --- this is easily seen by embedding $C(SU_{-1}(2))$ into
$C(S^{3}, M_{2}(\C))$ as in \cite{zakrzewski1991anticommutative}.

With the estimate of Theorem \ref{thm:keyestimate}, we can investigate the
structure of the radial subalgebra. In fact, all the proofs are quite
straightforward using techniques which are well-known to experts in von Neumann
algebras. We however chose to give detailed proof both for convenience of the
reader and for the sake of completeness. From now on, we will write $M =
L^{\infty}(O_{N}^{+})$ and $A = \{\chi_{1}\}''$.

\subsection{Maximal abelianness}

We first prove that $A$ is maximal abelian. This will follow from the following
lemma concerning unitary sequences in $A$, which relies itself on
Theorem~\ref{thm:keyestimate}. In fact here we only use the fact that
$\vert\langle \chi_{l}u_{\xi'\eta'}^{k}, u_{\xi\eta}^{n}\chi_{l'}\rangle\vert
\to 0$ as $l$, $l' \to \infty$ if $\xi$ is orthogonal to $\eta$.

\begin{lem}\label{lem:weakconvergence}
  Let $N\geqslant 3$. Let $(u_{i})_{i}$ be a sequence of unitaries in $A$ weakly
  converging to $0$ and let $z\in A^{\perp}$. Then, $u_{i}zu_{i}^{*}$ converges
  $*$-weakly to $0$.
\end{lem}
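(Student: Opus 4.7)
The plan is to exploit the key estimate of Theorem~\ref{thm:keyestimate} via a Fourier expansion of $u_{i}$ in the orthonormal basis $(\chi_{l})_{l\in\N}$ of $L^{2}(A)$. Since $\|u_{i}zu_{i}^{*}\|_{\infty}\leqslant \|z\|_{\infty}$ uniformly in $i$, the sequence is bounded in $M$, so $\ast$-weak convergence to $0$ reduces to showing $h(wu_{i}zu_{i}^{*})\to 0$ for $w$ in a dense subspace of the predual $M_{*}\cong L^{1}(M,h)$, and I would take $w\in\Pol(O_{N}^{+})$. Writing $w=\E(w)+w^{\perp}$, the abelian part $\E(w)\in A$ commutes with $u_{i}$, so that $h(\E(w)u_{i}zu_{i}^{*})=h(\E(w)z)=h(\E(w)\E(z))=0$ using $\E(z)=0$ and the $A$-bimodularity of $\E$. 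Hence I may assume $w\in A^{\perp}\cap\Pol(O_{N}^{+})$, which is a \emph{finite} linear combination of coefficients $u^{n}_{\xi,\eta}$ with $\xi\perp\eta$ (the block-wise vanishing of the trace is equivalent to the orthogonality).

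Before applying the key estimate, I would reduce $z$ to the same class by $L^{2}$-approximation: if $z_{m}\in A^{\perp}\cap\Pol(O_{N}^{+})$ with $z_{m}\to z$ in $L^{2}$, then Cauchy--Schwarz gives $|h(wu_{i}(z-z_{m})u_{i}^{*})|\leqslant \|w\|_{\infty}\|z-z_{m}\|_{2}$ uniformly in $i$, allowing me to assume $z$ too is a finite sum of orthogonal coefficients. Expand $u_{i}=\sum_{l}c_{i,l}\chi_{l}$ in $L^{2}(A)$; since each $u^{l}$ is self-conjugate one has $\chi_{l}^{*}=\chi_{l}$, hence $u_{i}^{*}=\sum_{l}\overline{c_{i,l}}\chi_{l}$ and $\sum_{l}|c_{i,l}|^{2}=1$. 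The weak convergence of $u_{i}$ to $0$, paired with $\chi_{l}\in L^{1}(M,h)$, gives $c_{i,l}=h(\chi_{l}u_{i})\to 0$ for each fixed $l$.

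The Fourier truncations $u_{i}^{(N)}=\sum_{l\leqslant N}c_{i,l}\chi_{l}$ converge to $u_{i}$ in $L^{2}$, and using boundedness of $z$ one checks $u_{i}^{(N)}z(u_{i}^{(N)})^{*}\to u_{i}zu_{i}^{*}$ in $L^{1}$. Hence
\begin{equation*}
h(wu_{i}zu_{i}^{*})=\sum_{l,l'}c_{i,l}\overline{c_{i,l'}}\,\langle \chi_{l}z,\,w^{*}\chi_{l'}\rangle.
\end{equation*}
Because $w^{*}$ again lies in $A^{\perp}\cap\Pol(O_{N}^{+})$ and decomposes into finitely many coefficients with orthogonal vectors (orthogonality is preserved by the conjugate-linear unitary $j_{n}$ in the Kac case), both $z$ and $w^{*}$ fit into the hypothesis of Theorem~\ref{thm:keyestimate} applied term-by-term, with the orthogonal vectors on the right-hand side coming from the expansion of $w^{*}$. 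This yields a constant $K=K(z,w)>0$ with $|\langle \chi_{l}z,\,w^{*}\chi_{l'}\rangle|\leqslant Kq^{\max(l,l')}$ for all $l,l'$.

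Combining $|c_{i,l}\overline{c_{i,l'}}|\leqslant \tfrac{1}{2}(|c_{i,l}|^{2}+|c_{i,l'}|^{2})$ with the elementary bound $\sum_{l'}q^{\max(l,l')}\leqslant (l+1)q^{l}/(1-q)$ gives
\begin{equation*}
|h(wu_{i}zu_{i}^{*})|\leqslant K'\sum_{l}|c_{i,l}|^{2}(l+1)q^{l},
\end{equation*}
and this tends to $0$ by dominated convergence: the summands are dominated by the summable sequence $(l+1)q^{l}$ (using $0<q<1$, valid precisely because $N\geqslant 3$), while $|c_{i,l}|^{2}\to 0$ pointwise in $l$. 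The main obstacle in the plan is not any individual calculation but the two-sided reduction to the setup of Theorem~\ref{thm:keyestimate}: pushing $w$ into $A^{\perp}$ uses the $A$-bimodularity of $\E$ and $u_{i}\in A$, while pushing $z$ into $A^{\perp}\cap\Pol$ relies on $L^{2}$-density stabilised by the $L^{\infty}$-bound on $u_{i}$; once both sides of the inner product are finite sums of orthogonal coefficients, the key estimate applies directly and the remaining bookkeeping is routine.
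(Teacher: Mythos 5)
Your proof is correct, and its skeleton is the same as the paper's: expand $u_{i}$ in the orthonormal basis of characters, feed the resulting matrix of scalar products $\langle \chi_{l}\,\cdot\,,\,\cdot\,\chi_{l'}\rangle$ into Theorem~\ref{thm:keyestimate}, and use density of $\Pol(O_{N}^{+})$ together with the uniform bound $\|u_{i}zu_{i}^{*}\|\leqslant\|z\|$ to pass to general $z$ and general test functionals. Two points of execution genuinely differ, both in your favour. First, you place the orthogonality hypothesis of the key estimate on the test element, by splitting $w=\E(w)+w^{\perp}$ and killing the $A$-part exactly via traciality and commutation with $u_{i}$; the paper instead keeps the orthogonality on $z$ and tests against single coefficients $u^{k}_{\xi'\eta'}$ with arbitrary vectors. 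Both are legitimate since the theorem only requires orthogonality in one slot, but your reduction does oblige you to justify (as you do, correctly) that a traceless block of coefficients of $u^{n}$ is a finite sum of $u^{n}_{\xi\eta}$ with $\xi\perp\eta$ and that the adjoint preserves this class in the Kac case. Second, and more substantially, your control of the double series via $|c_{i,l}\overline{c_{i,l'}}|\leqslant\tfrac12(|c_{i,l}|^{2}+|c_{i,l'}|^{2})$ and dominated convergence against the summable majorant $(l+1)q^{l}$ exploits the full quantitative decay $Kq^{\max(l,l')}$. The paper's proof uses only the qualitative decay and bounds the tail by $\tfrac{\epsilon}{2}\sum_{l,l'>L}|a^{i}_{l}\overline{a}^{i}_{l'}|\leqslant\tfrac{\epsilon}{2}\|(a^{i}_{l})_{l}\|_{2}^{2}$, a step which, read literally, replaces an $\ell^{1}$-type double sum by an $\ell^{2}$ norm and also omits the cross terms $l\leqslant L<l'$; your dominated-convergence argument is the version that closes rigorously, at the modest cost of invoking the precise exponential rate from Theorem~\ref{thm:keyestimate} rather than mere decay.
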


\begin{proof}
  For any $i$, let us decompose $u_{i}$ as $u_{i} =
  \sum_{l=0}^{+\infty}a_{l}^{i}\chi_{l}$ and note that by unitarity,
  $\|(a_{l}^{i})_{l}\|_{2} = 1$. Assume for the moment that $z$ is of the form
  $u_{\xi \eta}^{n}$ for some integer $n$ and two orthogonal unit vectors $\xi,
  \eta\in H_{n}$. Considering another integer $k$ and two arbitrary unit vectors
  $\xi', \eta'\in H_{k}$, we will first prove that
  \begin{equation*}
    S_{i} = \vert \langle u_{\xi' \eta'}^{k}, u_{i}u_{\xi \eta}^{n}u_{i}^{*} \rangle\vert = \left\vert\sum_{l, l' = 0}^{+\infty}a_{l}^{i}\overline{a}_{l'}^{i}\langle u_{\xi' \eta'}^{k}, \chi_{l}u_{\xi \eta}^{n} \chi_{l'}\rangle\right\vert \underset{i \rightarrow +\infty}{\longrightarrow} 0.
  \end{equation*}
  Let $\epsilon > 0$ and note that $\langle u_{\xi' \eta'}^{k},
  \chi_{l}u_{\xi\eta}^{n} \chi_{l'}\rangle = \langle \chi_{l}u_{\xi' \eta'}^{k},
  u_{\xi\eta}^{n} \chi_{l'}\rangle$. By Theorem \ref{thm:keyestimate}, there
  exists $L\in \N$ such that $\vert\langle \chi_{l}u_{\xi' \eta'}^{k}, u_{\xi
    \eta}^{n}\chi_{l'}\rangle\vert \leqslant \epsilon/2$ as soon as $l, l' >
  L$. Thus,
  \begin{eqnarray*}
    S_{i} & \leqslant & \sum_{l, l' = 0}^{L}\vert a_{l}^{i}\overline{a}_{l'}^{i}\langle u_{\xi' \eta'}^{k}, \chi_{l}u_{\xi \eta}^{n} \chi_{l'}\rangle\vert + \frac{\epsilon}{2}\sum_{l, l' = L+1}^{+\infty}\vert a_{l}^{i}\overline{a}_{l'}^{i}\vert \\
    & \leqslant & \sum_{l, l' = 0}^{L}\vert a_{l}^{i}\overline{a}_{l'}^{i}\langle u_{\xi' \eta'}^{k}, \chi_{l}u_{\xi \eta}^{n} \chi_{l'}\rangle\vert + \frac{\epsilon}{2}\|(a_{l}^{i})_{l}\|_{2}^{2}
  \end{eqnarray*}
  Now, because $u_{i}\rightarrow 0$ in the weak topology, $a_{l}^{i} =
  h(\chi_{l}u_{i})\rightarrow 0$ for all fixed $l\in \N$ as $i\rightarrow
  +\infty$. In particular, there exists $i_{0}\in \N$ such that for all $i >
  i_{0}$ and all $l, l'\leqslant L$,
  \begin{equation*}
    \vert a_{l}^{i}\overline{a}_{l'}^{i}\vert \leqslant \frac{\epsilon}{2}\left(\sum_{l, l' = 0}^{L}\langle u_{\xi' \eta'}^{k}, \chi_{l}u_{\xi \eta}^{n} \chi_{l'}\rangle\right)^{-1}.
  \end{equation*}
  Thus, for $i>i_{0}$, $\vert \langle u^{k}_{\xi' \eta'},
  u_{i}u^{n}_{\xi\eta}u_{i}^{*}\rangle \vert \leqslant \epsilon$ and $S_{i}
  \rightarrow 0$.

  Making finite linear combinations on the left-hand side, we see that $\langle
  t, u_{i}u^{n}_{\xi \eta}u_{i}^{*}\rangle$ tends to $0$ as $i \to \infty$ for
  any $t\in \Pol(O_{N}^{+})$. Since $\Pol(O_{N}^{+})$ is dense in
  $L^{2}(O_{N}^{+})$ and $(u_{i}u^{n}_{\xi\eta}u_{i}^{*})_{i}$ is bounded
  $L^{2}(O_{N}^{+})$, this is also true for any $t \in L^{\infty}(O_{N}^{+})
  \subset L^{2}(O_{N}^{+})$. Then, we can write $\langle t, u_{i}u^{n}_{\xi
    \eta}u_{i}^{*}\rangle = \langle u_{i}^{*}tu_{i}, u^{n}_{\xi \eta}\rangle$
  and use similarly the density of $A^{\perp}\cap\Pol(O_{N}^{+})$ in $A^{\perp}$
  for the norm of $L^{2}(O_{N}^{+})$. This shows that $\langle t,
  u_{i}zu_{i}^{*}\rangle = \langle u_{i}^{*}tu_{i}, z\rangle \to 0$ as $i\to
  \infty$ for any $t\in M$ and $z\in A^{\perp}$. Since $h$ is a faithful trace
  and $(u_{i}zu_{i}^{*})_{i}$ is bounded in $L^{\infty}(O_{N}^{+})$, this shows
  the stated $*$-weak convergence.
\end{proof}

\begin{thm}\label{thm:maximality}
  Let $N\geqslant 3$. Then, the radial subalgebra $A$ is maximal abelian in $M$.
\end{thm}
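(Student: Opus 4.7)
The plan is to prove that if $x \in A' \cap M$ then $x \in A$, via the decomposition $x = \E(x) + z$ with $z \in A^{\perp}$, and to show that $z$ must vanish by exploiting Lemma~\ref{lem:weakconvergence}.

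First I would recall that $A$ is diffuse, since $\chi_1$ has semicircle distribution under $h$, so $A \cong L^\infty([-2,2])$. A standard fact about diffuse abelian von Neumann algebras is that they contain a sequence of unitaries $(u_i)_{i \in \N}$ converging to $0$ in the weak (equivalently $*$-weak) topology; for instance, under the identification $A \cong L^\infty([-2,2])$ with the semicircular probability measure, one can take $u_i$ to correspond to the exponentials $\exp(i n_k t)$ for a suitable sequence $n_k \to \infty$, and apply the Riemann--Lebesgue lemma.

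The main step is then straightforward: given $x \in A' \cap M$, write $x = \E(x) + z$ with $z \in A^{\perp}$. Since $\E(x) \in A$ is abelian with the $u_i$, and since $x$ commutes with every element of $A$ by hypothesis, we get
\begin{equation*}
  u_i z u_i^{*} = u_i x u_i^{*} - u_i \E(x) u_i^{*} = x - \E(x) = z
\end{equation*}
for all $i$. On the other hand Lemma~\ref{lem:weakconvergence} gives $u_i z u_i^{*} \to 0$ in the $*$-weak topology. Hence $z = 0$, so $x = \E(x) \in A$, establishing that $A' \cap M \subseteq A$. Since $A$ is abelian the reverse inclusion is trivial, and maximal abelianness follows.

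The only nontrivial ingredient is the existence of a weakly null unitary sequence in $A$, which is purely a consequence of diffuseness and is independent of the quantum group structure; the heart of the argument, Lemma~\ref{lem:weakconvergence}, has already been established using the key estimate of Theorem~\ref{thm:keyestimate}. So there is no real obstacle left at this stage — the work was done in the previous sections.
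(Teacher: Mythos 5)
Your proposal is correct and is essentially identical to the paper's proof: the paper also takes $x \in A' \cap M$, decomposes it as $x = y + z$ with $y \in A$, $z \in A^{\perp}$, observes that $x = u_i x u_i^* = y + u_i z u_i^*$ for a weakly null sequence of unitaries $u_i \in A$, and concludes $z = 0$ from Lemma~\ref{lem:weakconvergence}. Your explicit justification of the existence of such a sequence via diffuseness of $A$ and the Riemann--Lebesgue lemma is a detail the paper leaves implicit, but it is the standard argument and nothing is missing.
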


\begin{proof}
  Let $x\in A'\cap M$ and consider the decomposition $x = y+z$ with $y\in A$ and
  $z\in A^{\perp}$. Note that
  \begin{equation*}
    x = u_{i}xu_{i}^{*} = u_{i}yu_{i}^{*} + u_{i}zu_{i}^{*} = y + u_{i}zu_{i}^{*},
  \end{equation*}
  so that Lemma \ref{lem:weakconvergence} yields $x = y +
  \lim_{i}u_{i}zu_{i}^{*} = y$.
\end{proof}

The argument above also proves that the C*-algebra generated by $\chi_{1}$ is
maximal abelian in the reduced C*-algebra $C_{\red}(O_{N}^{+})$. From the
theorem, following the strategy of \cite{ricard2005qgaussian}, one can also
recover the factoriality of $L^{\infty}(O_{N}^{+})$ established in
\cite{vaes2007boundary} and also in \cite{fima2014cocycle} (as a byproduct of
non-inner amenability).

\begin{cor}
  For $N\geqslant 3$, the von Neumann algebra $L^{\infty}(O_{N}^{+})$ is a
  factor.
\end{cor}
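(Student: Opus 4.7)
The plan is to combine Theorem~\ref{thm:maximality} with an additional elementary argument exploiting that central elements commute with more than just $A$.

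First I would observe that if $x\in Z(M)$ then in particular $x$ commutes with every element of $A$, so $x\in A'\cap M$. By Theorem~\ref{thm:maximality} this forces $x\in A$. Hence $Z(M)\subset A$, and via the functional calculus identifying $A$ with $L^{\infty}([-2,2])$ every central element can be written $x=f(\chi_{1})$ for some Borel function $f$; equivalently, $x$ admits an orthogonal expansion $x=\sum_{l\geqslant 0}a_{l}\chi_{l}$ in the basis of $L^{2}(A)$ provided by the characters.

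To conclude, I would test centrality against the off-diagonal coefficients $u^{1}_{e_{i},e_{j}}$ with $i\neq j$ (noting that these, having orthogonal source and target vectors in $H_{1}$, belong to $A^{\perp}$). Using the product formula~\eqref{eq:product} together with the fusion rule $u^{1}\otimes u^{l}=u^{l+1}\oplus u^{l-1}$ from Theorem~\ref{thm:freefusion}, each of the products $\chi_{l}u^{1}_{e_{i},e_{j}}$ and $u^{1}_{e_{i},e_{j}}\chi_{l}$ decomposes into coefficients of $u^{l+1}$ and $u^{l-1}$, with the intertwiners $v^{l,1}_{l\pm 1}$ appearing on one side and $v^{1,l}_{l\pm 1}$ on the other. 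Equating, via Peter-Weyl orthogonality, the components in $u^{m}$ of the commutator $[f(\chi_{1}),u^{1}_{e_{i},e_{j}}]=0$ yields, for every $m$, an identity relating $a_{m-1}$ and $a_{m+1}$ through explicit vectors in $H_{m}\otimes H_{m}$. A direct inspection of the resulting recursion shows that the only solution is $a_{l}=0$ for $l\geqslant 1$, whence $x=a_{0}\cdot 1\in\C$.

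The main obstacle is verifying that this recursion is non-degenerate, which amounts to checking that the intertwiners $v^{l,1}_{l\pm 1}$ and $v^{1,l}_{l\pm 1}$ appearing on the two sides of the commutator differ sufficiently — a computation that can be carried out by unwinding the explicit choices made in Subsection~\ref{subsec:irreducible}. A more conceptual route, closer to \cite{ricard2005qgaussian}, would be to view $L^{2}(M)$ as an $M$-bimodule via its Peter-Weyl decomposition and observe that a nontrivial central projection $p\in A$ would produce an incompatible splitting of this bimodule structure, since the left and right $A$-actions on the isotypic components of inequivalent irreducibles cannot be simultaneously diagonalised by such a $p$.
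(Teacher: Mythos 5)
There is a genuine gap at the crux of your argument. Your first step --- $Z(M)\subset A'\cap M=A$ by Theorem~\ref{thm:maximality}, so any central element is $x=f(\chi_1)=\sum_l a_l\chi_l$ --- is exactly how the paper begins, and is fine. But everything then hinges on the claim that equating the $u^m$-isotypic components of $[x,u^1_{e_ie_j}]=0$ forces $a_l=0$ for $l\geqslant 1$, and this is precisely the point you do not establish. For each $m$ the isotypic projection yields a \emph{single} linear relation of the form $a_{m-1}(L^+_{m-1}-R^+_{m-1})=-a_{m+1}(L^-_{m+1}-R^-_{m+1})$, where $L^\pm,R^\pm$ are the operators on $H_m$ encoding $\sum_t v_m^{*}(e_t\otimes e_i)\otimes\overline{v_m^{*}(e_t\otimes e_j)}$ for the two orders of multiplication. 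To conclude you must check (i) that these difference operators are nonzero and (ii) that the two of them are linearly independent in $\B(H_m)$ --- otherwise you only obtain a two-term recursion $a_{m+1}=c_m a_{m-1}$, which does not by itself kill the coefficients and would require a further summability or boundedness argument. "A direct inspection of the resulting recursion shows" is exactly the computation that constitutes the proof, and it is not supplied; it involves the explicit Jones--Wenzl projections $P_{m}$ viewed inside $H_{l}\otimes H_1$ versus $H_1\otimes H_{l}$ and is not a formality. Your alternative "more conceptual route" via the bimodule structure of $L^2(M)$ is too vague to evaluate as stated.

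For comparison, the paper sidesteps this computation entirely by using the action $\alpha_g$ of the classical group $O_N$ on $M$: each $\alpha_g(A)$ is again maximal abelian, so $Z(M)\subset\bigcap_{g}\alpha_g(A)$, and one checks that a nonzero coefficient $c_n$ with $n>0$ would force $\alpha_g(\chi_n)\in\C\chi_n$ for all $g$, i.e.\ the restriction $v^n$ of $u^n$ to $O_N$ would be a multiple of a one-dimensional representation --- impossible since $O_N$ has infinitely many irreducibles. If you want to salvage your direct approach, you should either carry out the intertwiner computation for $m=0$ and $m=1$ (where $H_0=\C$ and the operators can be written down explicitly, already forcing $a_1=0$ and $a_2=0$, after which the recursion propagates) or switch to the paper's symmetry argument.
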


\begin{proof}
  We exploit the natural action of the classical group $O_{N}$ on $M$ given by
  the following formula, for $g\in O_{N}$ and $x \in C_{\red}(O_{N}^{+})$:
  \begin{equation*}
    \alpha_{g}(x) = (\mathrm{ev}_{g}\pi\otimes\id)\Delta'(x),
  \end{equation*}
  where $\pi : C(O_{N}^{+})\to C(O_{N})$ is the canonical quotient map,
  $\mathrm{ev}_{g} : C(O_{N})\to \C$ is the evaluation map at $g$, and $\Delta'
  : C_{\red}(O_{N}^{+})\to C(O_{N}^{+})\otimes C_{\red}(O_{N}^{+})$ is induced
  from the coproduct of $C(O_{N}^{+})$ thanks to Fell's absorption
  principle. The $*$-automorphism of $C_{\red}(O_{N}^{+})$ defined in this way
  leaves the Haar state $h$ invariant, and thus it extends to $M$. The action of
  $\alpha_{g}$ on coefficients of an irreducible representation $u^{n}$ of
  $O_{N}^{+}$ is given by the following expression, where $v^{n} =
  (\pi\otimes\id)(u^{n})$ is the restriction of $u^{n}$ to $O_{N}$:
  \begin{equation*}
    (\alpha_{g}\otimes\id)(u^{n}) = (\mathrm{ev}_{g}\otimes\id\otimes\id)(v_{13}^{n}u_{23}^{n}) = (1\otimes v^{n}(g))u^{n}.
  \end{equation*}
  In particular we have $\alpha_{g}(\chi_{n}) = \sum_{rs}
  v^{n}(g)_{rs}u^{n}_{sr}$ where $r$, $s$ are indices corresponding to an
  orthonormal basis of $H_{n}$. Note that $\alpha_{g}$ leaves the subspace of
  coefficients of any fixed representation of $O_{N}^{+}$ invariant.

  Since $A$ is maximal abelian in $M$, $\alpha_{g}(A)$ is maximal abelian in $M$
  for every $g \in O_{N}$, and so the center of $M$ is contained in
  $\alpha_{g}(A)$ for every $g\in O_N$. Hence it suffices to show that the
  intersection of the subalgebras $\alpha_{g}(A)$ reduces to $\C
  1$. Equivalently, we take $c \in A$ such that $\alpha_g(c) \in A$ for all $g
  \in O_N$, and we want to prove that $c = \lambda 1$. For this we write $c =
  \sum c_{n}\chi_{n}$ in $L^{2}(O_{N}^{+})$. The orthogonal projection of
  $\alpha_{g}(c)$ onto the subspace generated by the coefficients of $u^{n}$ is
  $c_{n}\alpha_{g}(\chi_{n})$, whereas the projection of $A$ is
  $\C\chi_{n}$. Hence, if $c_{n}\neq 0$ then we must have
  $\alpha_{g}(\chi_{n})\in\C\chi_{n}$ for all $g\in O_{N}$. By the computation
  above and the fact that the coefficients $u^{n}_{rs}$ are linearly
  independent, this happens if and only if $v^{n}(g)$ is scalar for all $g$,
  i.e. $v^{n}$ is a multiple of a one-dimensional representation. But then
  $v^{2n}\subset v^{n}\otimes v^{n}$ would be trivial, and if $n>0$ this would
  imply that $O_{N}$ has only finitely many irreducible representations up to
  equivalence, since any of them is contained in one of the $v^{k}$ and $v^{k+1}
  \subset v^{k}\otimes v^{1}$. Hence $c_{n} = 0$ for all $n>0$.
\end{proof}

\subsection{Singularity and the mixing property}

Now that we know that the radial subalgebra is a MASA, we can investigate
further properties. By \cite{isono2012examples}, we know that $A$ cannot be a
regular MASA (also called Cartan subalgebra) because $M$ is strongly solid. In
view of this result and of the case of the radial MASA in free group factors
treated in \cite{radulescu1991singularity}, it is natural to conjecture that $A$
is \emph{singular}. Recall that for a von Neumann algebra $N$, we denote by
$\mathcal{U}(N)$ the group of unitary elements of $N$.

\begin{de}
  A MASA $A\subset M$ is said to be singular if $\{u\in \mathcal{U}(M),
  uAu^{*}\subset A\} = \mathcal{U}(A)$.
\end{de}

There are several ways of proving that a MASA is singular. One way goes through
a von Neumann algebraic analogue of the mixing property for group actions,
called weak mixing, which eventually turns out to be equivalent to
singularity. In our case, we can prove a stronger statement than singularity,
namely that $A$ is \emph{mixing} in the following sense:

\begin{de}
  A subalgebra $A$ of a von Neumann algebra $M$ is said to be mixing if for any
  sequence $(u_{n})_{n}$ of unitaries in $A$ converging weakly to $0$ and any
  elements $x, y\in A^{\perp}$,
  \begin{equation*}
    \|\E_{A}(xu_{n}y)\|_{2} \longrightarrow 0.
  \end{equation*}
\end{de}

Again, the proof is an easy application of Theorem \ref{thm:keyestimate}.

\begin{thm}\label{thm:mixing}
  For $N \geqslant 3$ the radial MASA is mixing.
\end{thm}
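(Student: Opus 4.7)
The plan is to mirror the proof of Lemma~\ref{lem:weakconvergence}: reduce to coefficients of irreducible representations and invoke Theorem~\ref{thm:keyestimate}. First, using the bounds $\|abc\|_{2}\leqslant\|a\|_{\infty}\|b\|_{\infty}\|c\|_{2}$ and $\|abc\|_{2}\leqslant\|a\|_{\infty}\|b\|_{2}\|c\|_{\infty}$ (valid in any tracial von Neumann algebra), combined with $\|u_{n}\|_{\infty}=1$ and the $L^{2}$-density of $\Span\{u^{p}_{\xi\eta}:\xi,\eta\in H_{p},\,\langle\xi,\eta\rangle=0\}$ in $A^{\perp}$, I would reduce the problem to the special case $x=u^{p}_{\xi\eta}$ and $y=u^{k}_{\xi'\eta'}$ with $\xi\perp\eta$ unit vectors in $H_{p}$ and $\xi',\eta'$ unit vectors in $H_{k}$.

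Fixing such $x,y$, I would expand $u_{n}=\sum_{l\geqslant 0}a^{n}_{l}\chi_{l}$ and $\E_{A}(xu_{n}y)=\sum_{m\geqslant 0}c^{n}_{m}\chi_{m}$ in the orthonormal basis of characters. Using cyclicity of $h$, $\chi_{l}^{*}=\chi_{l}$ and $(u^{k}_{\xi'\eta'})^{*}=u^{k}_{\overline{\eta}',\overline{\xi}'}$ (valid in the Kac case), a short manipulation rewrites
\begin{equation*}
c^{n}_{m}=\langle\chi_{m},xu_{n}y\rangle=\sum_{l\geqslant 0}\overline{a^{n}_{l}}\,B_{m,l},\qquad B_{m,l}:=\langle\chi_{m}u^{k}_{\overline{\eta}',\overline{\xi}'},u^{p}_{\xi\eta}\chi_{l}\rangle.
\end{equation*}
Since $\xi$ and $\eta$ are orthogonal unit vectors in $H_{p}$, Theorem~\ref{thm:keyestimate} produces a constant $K>0$, depending only on $x$ and $y$, such that $|B_{m,l}|\leqslant Kq^{\max(m,l)}$ for all $m,l\in\N$.

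This estimate gives $\sum_{m,l\geqslant 0}|B_{m,l}|^{2}\leqslant K^{2}\sum_{m\geqslant 0}(2m+1)q^{2m}<\infty$, so $B=(B_{m,l})_{m,l}$ defines a Hilbert--Schmidt, hence compact, operator on $\ell^{2}(\N)$, and the identity $c^{n}=B\overline{a^{n}}$ holds in $\ell^{2}(\N)$. On the other hand, $\|(a^{n}_{l})_{l}\|_{\ell^{2}}=\|u_{n}\|_{2}=1$, and $a^{n}_{l}=h(\chi_{l}u_{n})\to 0$ for each fixed $l$ by the weak convergence of $u_{n}$, so $(\overline{a^{n}_{l}})_{l}$ tends weakly to $0$ in $\ell^{2}(\N)$. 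Compactness of $B$ then forces $\|c^{n}\|_{\ell^{2}}=\|B\overline{a^{n}}\|_{\ell^{2}}\to 0$, which is precisely $\|\E_{A}(xu_{n}y)\|_{2}\to 0$, as wanted.

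The only real obstacle is the preliminary tracial rearrangement producing the matrix $B$: one has to route things so that the pair of vectors with zero inner product sits on the \emph{middle} representation required by Theorem~\ref{thm:keyestimate}. Once this is done, the geometric decay $q^{\max(m,l)}$ is strong enough to make the associated infinite matrix Hilbert--Schmidt, and the mixing property follows at once from the standard fact that compact operators on $\ell^{2}$ send weakly convergent sequences to norm-convergent ones.
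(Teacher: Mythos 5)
Your proposal is correct and follows essentially the same route as the paper: reduce by density to coefficients $u^{p}_{\xi\eta}$, $u^{k}_{\xi'\eta'}$ with $\xi\perp\eta$, expand the unitaries in the character basis, and feed the resulting matrix coefficients $\langle\chi_{m}u^{k}_{\cdot\cdot},u^{p}_{\xi\eta}\chi_{l}\rangle$ into Theorem~\ref{thm:keyestimate}. The only difference is cosmetic: where the paper concludes via Cauchy--Schwarz and dominated convergence over the sum in $l'$, you package the decay $q^{\max(m,l)}$ as saying that the matrix $B$ is Hilbert--Schmidt and invoke compactness on the weakly null sequence $(\overline{a^{n}_{l}})_{l}$ --- the two arguments are interchangeable here.
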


\begin{proof}
  Fix a sequence of unitaries $u_i \in A$ converging weakly to $0$.  Let $k,
  n\in \N$ and consider two pairs of orthogonal unit vectors $\xi, \eta\in
  H_{n}$ and $\xi', \eta'\in H_{k}$. Since elements of the form
  $u^{k}_{\xi'\eta'}$ (resp. $u^{n*}_{\xi \eta}$) with $\xi'\perp\eta'$
  (resp. $\xi\perp\eta$) span a dense subspace of $A^\perp \subset
  L^{2}(O_{N}^{+})$, it is enough to prove that $X_i\to 0$, where
  \begin{equation*}
    X_i = \|\E(u^{n*}_{\xi \eta}u_{i}u^{k}_{\xi' \eta'})\|_{2}^{2}.
  \end{equation*}
  To compute the square norm, we can use the orthonormal basis given by the
  characters to get
  \begin{eqnarray*}
    X_i & = & \sum_{l'=0}^{+\infty}\vert \langle\E(u^{n*}_{\xi \eta}u_{i}u^{k}_{\xi' \eta'}), \chi_{l'} \rangle\vert^{2} \\
    & = & \sum_{l'=0}^{+\infty}\vert \langle u^{n*}_{\xi \eta}u_{i}u^{k}_{\xi' \eta'}, \chi_{l'} \rangle\vert^{2} \\
    & = & \sum_{l'=0}^{+\infty}\vert \langle u_{i}u^{k}_{\xi' \eta'}, u^{n}_{\xi \eta}\chi_{l'} \rangle\vert^{2}.
  \end{eqnarray*}
  Since $u_i$ converges weakly to $0$, each term of the sum above tends to $0$
  as $i \to \infty$. Hence it suffices to show that the dominated convergence
  theorem applies. For this we decompose the unitaries $u_{i}$ according to the
  basis of characters: $u_{i} = \sum_{i=0}^{+\infty}a_{l}^{i}\chi_{l}$, with
  $\sum_l |a^i_l|^2 = \|u_i\|_2^2 = 1$. Then, the Cauchy-Schwartz inequality and
  Theorem \ref{thm:keyestimate} yield
  \begin{eqnarray*}
    X_i & = & \sum_{l'=0}^{+\infty}\left\vert\sum_{l=0}^{+\infty}
      a_{l}^{i} \langle \chi_{l}u^{k}_{\xi'\eta'}, 
      u^{n}_{\xi \eta}\chi_{l'} \rangle\right\vert^{2} \\
    & \leqslant & \sum_{l'=0}^{+\infty}\sum_{l=0}^{+\infty}
    \vert\langle \chi_{l}u^{k}_{\xi'\eta'}, 
    u^{n}_{\xi \eta}\chi_{l'} \rangle\vert^{2} \\ & \leqslant &
    K\sum_{l'=0}^{+\infty}\sum_{l=0}^{+\infty}q^{\max(l, l')} < +\infty.
  \end{eqnarray*}
\end{proof}

\begin{cor}
  The radial MASA is singular.
\end{cor}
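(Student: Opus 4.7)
The plan is to deduce singularity from the mixing property of Theorem~\ref{thm:mixing} by a standard decomposition argument. Let $u\in \mathcal U(M)$ normalize $A$, i.e.~$uAu^*\subset A$, and decompose $u = a+b$ with $a=\E(u)\in A$ and $b\in A^\perp$; the goal is to show $b=0$.

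First I pick a sequence of unitaries $(v_n)_n$ in $A$ with $v_n\to 0$ weakly: since $A\simeq L^\infty([-2,2])$ is diffuse and abelian, such a sequence exists, for instance of the form $v_n = \exp(in f(\chi_1))$ for a suitable homeomorphism $f$ of the spectrum onto $[0,2\pi]$. Since $\E$ is an $A$-bimodule map, the subspace $A^\perp$ is stable under left and right multiplication by elements of $A$, so in the expansion
\begin{equation*}
  uv_nu^* = av_na^* + av_nb^* + bv_na^* + bv_nb^*,
\end{equation*}
the two middle cross terms lie in $A^\perp$. Using the assumption $uv_nu^*\in A$ and applying $\E$ thus gives
\begin{equation*}
  uv_nu^* = av_na^* + \E(bv_nb^*).
\end{equation*}
By Theorem~\ref{thm:mixing} applied to $x=b$ and $y=b^*$ (both in $A^\perp$, since $\E(b^*)=\E(b)^*=0$), one has $\|\E(bv_nb^*)\|_2\to 0$.

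Next I exploit the abelianness of $A$ together with the trace property of $h$: since $a^*a$ and $v_n$ both lie in the abelian algebra $A$ they commute, so
\begin{equation*}
  \|av_na^*\|_2^2 = h(a^*a\, v_n\, a^*a\, v_n^*) = h((a^*a)^2),
\end{equation*}
which is independent of $n$. Combined with $\|uv_nu^*\|_2=1$ and $\|\E(bv_nb^*)\|_2\to 0$, the triangle inequality forces $h((a^*a)^2)=1$. But $\|a\|\leqslant\|u\|=1$ since $\E$ is contractive, so $0\leqslant a^*a\leqslant 1$ and therefore $(a^*a)^2\leqslant a^*a$, giving
\begin{equation*}
  1 = h((a^*a)^2)\leqslant h(a^*a) = \|a\|_2^2 \leqslant \|u\|_2^2 = 1.
\end{equation*}
The Pythagorean identity $\|u\|_2^2=\|a\|_2^2+\|b\|_2^2$ then forces $\|b\|_2=0$, and hence $u=a\in A$.

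I do not anticipate any genuine obstacle here: all the substantive work has already been done in Theorem~\ref{thm:mixing}, and the remainder is the classical (short) argument showing that a mixing MASA is singular. The only minor point is the existence of a weakly null sequence of unitaries in a diffuse abelian von Neumann algebra, which is standard.
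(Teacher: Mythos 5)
Your proof is correct and follows essentially the same route as the paper: both arguments take a weakly null sequence of unitaries in the diffuse abelian algebra $A$, conjugate by the normalizing unitary, use the normalizer hypothesis to stay inside $A$, and invoke Theorem~\ref{thm:mixing} to kill the $A^{\perp}$--$A^{\perp}$ term, forcing $\|\E(u)\|_{2}=\|u\|_{2}$. The paper packages this slightly more compactly by working with $v_{n}^{*}uv_{n}u^{*}\in A$ (which has $2$-norm one outright) instead of expanding $u=a+b$ and computing $\|av_{n}a^{*}\|_{2}^{2}=h((a^{*}a)^{2})$, but the mechanism is identical.
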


\begin{proof}
  Since $A$ is diffuse, we can find a sequence of unitaries $u_n \in A$
  converging weakly to $0$. Let $v \in \mathcal{U}(M)$ be a unitary such that
  $vAv^* \subset A$. In particular we have $vu_nv^* \in A$, hence
  $\|\E_A(u_n^*vu_nv^*)\|_2 = \|u_n^*vu_nv^*\|_2 = 1$. On the other hand the
  mixingness property implies that $\|\E_A(u_n^*vu_nv^*) - \E_A(v) \E_A(v^*)\|_2
  \to 0$. As a result we have $1 = \|\E_A(v) \E_A(v^*)\|_2 \leq \|\E_A(v)\|_2
  \leq \|v\|_2 = 1$, hence $\E_A(v)=v$ and $v \in A$.
\end{proof}

\subsection{The spectral measure}

Another very natural problem for a given MASA is to study the $A$-$A$-bimodule
structure of $H = L^{2}(M)\ominus L^{2}(A)$. This can be done through the
associated \emph{spectral measure}. Because the representations of $A$ on $H$ on
the left and on the right commute, their images generate an abelian von Neumann
subalgebra of $\B(H)$ isomorphic to $L^{\infty}([-2, 2]\times [-2, 2])$. Thus,
disintegrating $H$ with respect to this subalgebra yields a measure class
$[\nu]$ on $[-2, 2]\times [-2, 2]$ which encapsulates some properties of the
bimodule.

We first recall an elementary lemma about the Chebyshev polynomials $U_n$, which
are linked to the characters $\chi_n$ by $\chi_n = T_n(\chi_1) = U_n(\chi_1/2)$:

\begin{lem} \label{lem:chebyshevestimate} For all $n \in \N$ we have $\sup_{t\in
    \left[-1,1\right]} | U_n(t) | = n+1$. Moreover, for all $r > 1$ there exists
  an open neighborhood $\Omega$ of $\left[-1,1\right]$ in $\C$ such that
  $\sup_{z\in \Omega} |U_n(z)| \leqslant (n+1)r^{n}$ for all $n$.
\end{lem}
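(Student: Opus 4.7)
The plan is to parametrize a complex neighborhood of $[-1,1]$ via the Joukowski map $\phi(w) = (w + w^{-1})/2$, which sends the unit circle $\{|w|=1\}$ onto $[-1,1]$ in a 2-to-1 fashion. A short geometric-series computation gives the key identity
\begin{equation*}
U_n(\phi(w)) = \sum_{k=0}^n w^{n-2k} = w^n + w^{n-2} + \cdots + w^{-n+2} + w^{-n},
\end{equation*}
valid for every $w \in \C^*$. Both parts of the lemma will follow from this single formula.

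For the first assertion I would specialize to $w = e^{i\theta}$, which gives $\phi(w) = \cos\theta$ and the familiar trigonometric form $U_n(\cos\theta) = \sin((n+1)\theta)/\sin\theta$. The inequality $|\sin((n+1)\theta)| \leqslant (n+1)|\sin\theta|$, easily proved by induction on $n$ using $\sin((n+1)\theta) = \sin(n\theta)\cos\theta + \cos(n\theta)\sin\theta$, immediately yields $|U_n(\cos\theta)| \leqslant n+1$ on $(0,\pi)$; and $U_n(1) = n+1$ (obtained by continuity as $\theta \to 0$) shows that this bound is attained, hence $\sup_{t \in [-1,1]} |U_n(t)| = n+1$.

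For the second assertion, given $r > 1$, I would set $A_r = \{w \in \C : r^{-1} < |w| < r\}$ and take $\Omega := \phi(A_r)$. Since $\phi$ is non-constant and holomorphic on the connected open set $\C^*$, the open mapping theorem guarantees that $\Omega$ is open, and the inclusion $[-1,1] = \phi(\{|w|=1\}) \subset \phi(A_r)$ shows that it is indeed a neighborhood of $[-1,1]$. For any $z \in \Omega$, picking a preimage $w \in A_r$ with $\phi(w) = z$ (the choice is irrelevant thanks to the symmetry $\phi(w) = \phi(w^{-1})$ and the invariance of $A_r$ under inversion) and applying the identity yields
\begin{equation*}
|U_n(z)| \leqslant \sum_{k=0}^n |w|^{n-2k} \leqslant (n+1)\max(|w|,|w|^{-1})^n \leqslant (n+1)\, r^n.
\end{equation*}
There is no serious obstacle in this proof; the only conceptual point worth highlighting is that the Laurent parametrization through $\phi$ is exactly what makes the growth rate $(n+1)r^n$ transparent, by converting the polynomial into a sum of $n+1$ monomials whose moduli are all controlled by $r^n$.
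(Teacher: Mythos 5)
Your proof is correct and follows essentially the same route as the paper: both rest on the identity $U_n(\varphi(w))=\sum_{k=0}^n w^{n-2k}$ for the Joukowski map $\varphi(w)=\frac12(w+w^{-1})$, with the real case handled on the unit circle and the complex bound obtained by taking $\Omega$ to be the image of an annulus of outer radius $r$ (the paper identifies this image explicitly as the interior of an ellipse, whereas you invoke the open mapping theorem — a cosmetic difference).
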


\begin{proof}
  The first assertion is well known and follows immediately from the formula
  \begin{displaymath}
    U_n(\cos\theta) = \frac{\sin((n+1)\theta)}{\sin(\theta)} 
    = \sum_{k=0}^n e^{i(n-2k)\theta}.
  \end{displaymath}
  The second assertion is probably also known. Let us denote $\varphi(w) = \frac
  12(w+w^{-1})$ for $w \in \C^*$.  By the identity theorem we have
  $U_n(\varphi(w)) = \sum_{k=0}^n w^{n-2k}$ hence $|U_n (\varphi(w))| \leqslant
  (n+1)|w|^{n}$ if $|w|\geqslant 1$.

  On the other hand one can compute $\varphi(se^{i\theta}) = \frac
  12(s+s^{-1})\cos(\theta) + \frac i2(s-s^{-1})\sin(\theta)$, hence for
  $s\geqslant 1$ the image by $\varphi$ of the circle $C_s = \{w\in\C \mid
  |w|=s\}$ is the ellipse $E_s$ with axes $\frac
  12\left[-s-s^{-1},s+s^{-1}\right]$ and $\frac
  i2\left[-s+s^{-1},s-s^{-1}\right]$. Note that when $s$ decreases to $1$,
  $\frac 12 (s+s^{-1})$ (resp. $\frac 12(s-s^{-1})$) decreases to $1$
  (resp. $0$). In particular one sees that for $r > 1$ the image $\Omega =
  \varphi(D_r)$ of $D_r = \{w\in\C \mid 1\leqslant |w| < r\}$ is an open
  neighborhood of $\left[-1,1\right]$. For $z \in \Omega$ one obtains then
  $|U_n(z)| < (n+1) r^{n}$ by writing $z = \varphi(w)$ with $w \in D_r$.
\end{proof}

\begin{thm}
  For $N\geqslant 3$ the measure $\nu$ is Lebesgue equivalent to $\lambda\otimes
  \lambda$, where $\lambda$ denotes the Lebesgue measure on $[-2, 2]$.
\end{thm}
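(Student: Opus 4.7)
The plan is to establish the two absolute continuity relations $\nu \ll \lambda \otimes \lambda$ and $\lambda \otimes \lambda \ll \nu$. The first is essentially automatic: the two commuting actions of $A$ combine into a normal representation of $A \overline\otimes A \cong L^\infty([-2,2]^2, \mu_{SC} \otimes \mu_{SC})$, where $\mu_{SC}$ denotes the semicircle law, and the spectral measure class of any normal representation of a commutative von Neumann algebra is dominated by the underlying measure class. Since $\mu_{SC}$ is Lebesgue equivalent to $\lambda$ on $[-2,2]$, this already gives $\nu \ll \lambda \otimes \lambda$. The nontrivial direction is therefore $\lambda \otimes \lambda \ll \nu$, which will crucially exploit Theorem~\ref{thm:keyestimate}.

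For this I consider vectors of the form $z = u^n_{\xi \eta}$ with $\xi, \eta \in H_n$ orthogonal unit vectors; by Equation~\eqref{eq:expectation} these lie in $A^\perp$, and their linear span is dense in $H$. To each such $z$ I associate the scalar spectral measure $\nu_z$ on $[-2,2]^2$ determined by $\int f(s) g(t)\, d\nu_z = \langle f(\chi_1)\, z\, g(\chi_1), z\rangle$ for real-valued continuous $f,g$. Under the identification of $\chi_n$ with the polynomial $s\mapsto U_n(s/2)$, the Chebyshev moments $a_{l,l'} := \langle \chi_l z \chi_{l'}, z\rangle$ are bounded by $K q^{\max(l,l')}$ thanks to Theorem~\ref{thm:keyestimate}. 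Since $(U_l(\cdot/2))_{l\geq 0}$ is an orthonormal basis of $L^2(\mu_{SC})$, this forces $\sum_{l,l'} |a_{l,l'}|^2 < \infty$, so $\nu_z$ has a square-integrable density $f_z(s,t) = \sum_{l,l'} a_{l,l'} U_l(s/2) U_{l'}(t/2)$ with respect to $\mu_{SC} \otimes \mu_{SC}$.

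The heart of the argument is to use the off-axis growth estimate from Lemma~\ref{lem:chebyshevestimate} together with the exponential decay of the $a_{l,l'}$ to show that this Chebyshev series converges absolutely on a complex neighborhood $\Omega\times\Omega$ of $[-2,2]^2$ in $\C^2$. Choosing $r>1$ close enough to $1$ that $q^{1/2} r < 1$, and using $\max(l,l') \geq (l+l')/2$, we get
\begin{equation*}
  \sum_{l,l'} |a_{l,l'}|\, |U_l(z/2)|\, |U_{l'}(w/2)| \leq K \sum_{l,l'} (l+1)(l'+1)\, (q^{1/2} r)^{l+l'} < \infty
\end{equation*}
uniformly on $\Omega\times\Omega$, so that $f_z$ extends to a holomorphic function there and is in particular real-analytic on $[-2,2]^2$. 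Now if $\nu(E) = 0$ for some $E$ with $\lambda \otimes \lambda(E) > 0$, then $\nu_z(E) = 0$ for every $z$, so $f_z$ vanishes on a set of positive Lebesgue measure and must then vanish identically by the identity theorem for real-analytic functions. This forces $\nu_z = 0$ and $\|z\|^2 = \nu_z([-2,2]^2) = 0$, contradicting $z \neq 0$; hence $\lambda\otimes\lambda \ll \nu$. The main obstacle is precisely this analyticity argument: with only the qualitative information that $\chi_l z \chi_{l'}$ tends weakly to $0$ (as in the earlier sections) one would only get $f_z \in L^2$, which is too weak to propagate vanishing from a positive-measure set to the whole square, and it is the exponential decay provided by the key estimate that makes the argument work.
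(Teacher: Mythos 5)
Your core argument is the same as the paper's: expand the left--right moments of a coefficient $z=u^n_{\xi\eta}$ (with $\xi\perp\eta$) in the Chebyshev basis, combine the exponential decay $|a_{l,l'}|\leqslant Kq^{\max(l,l')}$ from Theorem~\ref{thm:keyestimate} with the growth bound of Lemma~\ref{lem:chebyshevestimate} on a complex neighbourhood to get a density that extends holomorphically, and invoke the identity theorem for real-analytic functions to rule out vanishing on a set of positive Lebesgue measure. That part is correct and is exactly the paper's mechanism (the paper writes the density against $\lambda\otimes\lambda$ with the semicircle weights $\sqrt{4-s^2}\sqrt{4-t^2}$ made explicit, you write it against $\mu_{SC}\otimes\mu_{SC}$; these are equivalent).

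The genuine flaw is your first paragraph. It is \emph{not} true that two commuting normal representations of $A\cong L^\infty([-2,2],\mu_{SC})$ automatically assemble into a normal representation of $A\,\overline\otimes\,A\cong L^\infty([-2,2]^2,\mu_{SC}\otimes\mu_{SC})$; whether they do is precisely the statement $\nu\ll\lambda\otimes\lambda$ that you are trying to prove. Concretely, if $A\subset M$ is a Cartan MASA coming from a crossed product $L^\infty(X)\rtimes\Z$, the bimodule $L^2(M)\ominus L^2(A)$ decomposes as $\bigoplus_{n\neq 0}L^2(X)u^n$ with $a\cdot(fu^n)\cdot b=af\,(b\circ T^{-n})u^n$, so the left--right measure is carried by the graphs of the powers of $T$ and is \emph{singular} with respect to the product measure. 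So this direction cannot be dismissed as abstract nonsense. Fortunately the error is repairable from inside your own argument: your computation already shows $\nu_z\ll\mu_{SC}\otimes\mu_{SC}$ (indeed with continuous density) for each single coefficient $z$, and since absolute continuity of $\nu_\zeta$ for a dense family of vectors $\zeta$ gives $\nu\ll\lambda\otimes\lambda$, it suffices to extend the density computation to finite linear combinations of coefficients. This requires the polarized moments $\langle\chi_l u^k_{\xi'\eta'}\chi_{l'},u^n_{\xi\eta}\rangle$ with possibly different representations on the two sides --- which is exactly the bilinear form Theorem~\ref{thm:keyestimate} controls, and exactly how the paper organises the proof (it establishes the analytic density for every $\zeta\in\Pol(O_N^+)\cap A^\perp$ and gets both absolute-continuity directions at once from non-vanishing of the density almost everywhere). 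With that reorganisation your proof is complete.
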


\begin{proof}
  We will follow the strategy of \cite{dykema2013measure}. Let us first look at
  some "projections" of $\nu$ in the following sense: for two integers $k$ and
  $n$ and two pairs of orthogonal unit vectors $\xi, \eta\in H_{n}$ and $\xi',
  \eta'\in H_{k}$, there exists a complex measure $\mu$ on $[-2, 2]\times [-2,
  2]$ such that for any $a, b\in A$,
  \begin{equation*}
    \langle au^{k}_{\xi'\eta'}b, u^{n}_{\xi, \eta} \rangle = \iint_{[-2, 2]\times [-2, 2]}a(s)b(t)d\mu(s, t).
  \end{equation*}
  We will compute the Radon-Nikodym derivative of $\mu$ with respect to
  $\lambda\otimes \lambda$. To do this, let us set
  \begin{align*}
    D_{l, l'} &= \langle \chi_{l}u^{k}_{\xi'\eta'}\chi_{l'}, u^{n}_{\xi, \eta} \rangle \\
    f(z,z') &= \sum_{l,l'=0}^{\infty} A_{l,l'}(z,z') \text{~~~where~~~}
    A_{l,l'}(z,z') = T_l(z)T_{l'}(z')D_{l,l'}.
  \end{align*}
  Now we fix $r \in \left]1, 1/\sqrt q\right[$ and we consider the open subset
  $\left]-1,1\right[ \subset \Omega \subset \C$ given by
  Lemma~\ref{lem:chebyshevestimate}. Theorem~\ref{thm:keyestimate} yields the
  following estimate for the supremum norm on $2\Omega \times 2\Omega$:
  \begin{align*}
    \|A_{l,l'}\|_{\infty} &\leqslant K \|T_l\|_\infty \|T_{l'}\|_\infty
    q^{\max(l,l')} \leqslant K (l+1) r^l q^{l/2} (l'+1) r^{l'} q^{l'/2}.
  \end{align*}
  This implies that the series of functions defining $f$ converges normally on
  $2\Omega \times 2\Omega$. Since all summands are polynomials $f$ is
  holomorphic on $2\Omega \times 2\Omega$ and in particular analytic on
  $\left[-2,2\right] \times \left[-2,2\right]$.

  The function $f$ is linked to the measure $\mu$ by the following computation:
  \begin{align*}
    \langle au^{k}_{\xi'\eta'}b, u^{n}_{\xi, \eta} \rangle & = \sum_{l,
      l'=0}^{+\infty}\langle a, \chi_{l}\rangle \langle b, \chi_{l'}\rangle
    \langle \chi_{l}u^{k}_{\xi'\eta'}\chi_{l'}, u^{n}_{\xi, \eta} \rangle =
    \sum_{l, l'=0}^{+\infty}\langle a, \chi_{l}\rangle
    \langle b, \chi_{l'}\rangle D_{l, l'} \\
    & = \frac 1{4\pi^2} \sum_{l, l'=0}^{+\infty} D_{l, l'}
    \left(\int_{-2}^{2}a(s)T_{l}(s)\sqrt{4-s^{2}}ds\right)
    \left(\int_{-2}^{2}b(t)T_{l}(t)\sqrt{4-t^{2}}dt\right)  \\
    & = \frac 1{4\pi^2} \iint_{[-2, 2]\times [-2, 2]}a(s)b(t)f(s, t)
    \sqrt{4-s^{2}}\sqrt{4-t^{2}}d(\lambda\otimes\lambda)(s,t).
  \end{align*}
  Hence, $f(s, t) \sqrt{4-s^2}\sqrt{4-t^2}$ is the Radon-Nikodym derivative of
  $\mu$ with respect to $\lambda\otimes \lambda$.

  Consider now an arbitrary element $\zeta$ in $\Pol(O_{N}^{+})\cap
  A^{\perp}$. It can be written as a finite linear combination of coefficients
  corresponding to orthogonal vectors as above, hence the probability measure
  $\mu_{\zeta}$ defined by
  \begin{equation*}
    \langle a\zeta b, \zeta \rangle = \iint_{[-2, 2]\times [-2, 2]}a(s)b(t)d\mu_{\zeta}(s, t).
  \end{equation*}
  has a density of the form $f(s, t) \sqrt{4-s^2}\sqrt{4-t^2}$ with respect to
  $\lambda\otimes \lambda$, where $f$ is analytic on $\left[-2,2\right] \times
  \left[-2,2\right]$. Since $\mu_\zeta$ is obviously non-zero, $f$ does not
  vanish identically and by analyticity its zeros are contained in a set of
  Lebesgue measure $0$, so that $\mu_\zeta$ is equivalent to
  $\lambda\otimes\lambda$.  Because $\Pol(O_{N}^{+})\cap A^{\perp}$ is dense in
  $L^{2}(M)\ominus L^{2}(A)$, this implies that $[\nu] = [\lambda\otimes
  \lambda]$.
\end{proof}

Note that as a consequence, the $A-A$-bimodule $L^{2}(M)\ominus L^{2}(A)$ is
contained in a multiple of the coarse bimodule, see
\cite[Section~2]{mukherjee2013measure}. Since the coarse bimodule is mixing, we
can also recover Theorem \ref{thm:mixing} in this way.

\subsection{Concluding remarks}

We would like to briefly discuss some possible extensions of this work. First
consider the quantum automorphism group $\G(M_{N}(\C), \tr)$ of $M_{N}(\C)$
endowed with the canonical trace. It is known that the von Neumann algebra
$L^{\infty}(\G(M_{N}(\C), \tr))$ of this quantum group embeds into
$L^{\infty}(O_{N}^{+})$ as the subalgebra generated by all $u_{\xi, \eta}^{2n}$
for $n\in \N$ and $\xi, \eta\in H_{2n}$. Let us set $v^{n} = u^{2n}$. Then, the
$v^{n}$'s form a complete family of representatives of irreducible
representations of $\G(M_{N}(\C), \tr)$ with corresponding characters $\psi_{n}
= \chi_{2n}$. In particular, for any orthogonal unit vectors $\xi, \eta\in
H_{2n}$ and $\xi', \eta'\in H_{2k}$,
\begin{equation*}
  \langle \psi_{l}v^{k}_{\xi', \eta'}, v^{n}_{\xi, \eta}\psi_{l'}\rangle \leqslant K q^{\max(2l, 2l')}
\end{equation*}
by Theorem \ref{thm:keyestimate}. From this we see that the radial subalgebra in
$L^{\infty}(\G(M_{N}(\C), \tr))$ is maximal abelian and mixing and that its
associated bimodule is a direct sum of coarse bimodules. This is an interesting
example because $\G(M_{N}(\C), \tr)$ has $SO(3)$-type fusion rules, like another
important family of discrete quantum groups called the quantum permutation
groups $S_{N}^{+}$. This of course suggests that our result extends to
$S_{N}^{+}$. One way to prove this may be through monoidal equivalence
\cite{bichon2006ergodic}.

Another possible extension of our work would be to the non-Kac case. It is
possible that the estimate of Theorem \ref{thm:keyestimate} still holds with
appropriate modification for an arbitrary free orthogonal quantum
group. However, the proofs of Section \ref{sec:radial} all break down if the von
Neumann algebra is type III, because the radial MASA has no $h$-invariant
conditional expectation in that case. There is therefore an additional von
Neumann algebraic problem to solve in that case, but this could yield very
explicit examples of singular MASAs in type III factors.

\bibliographystyle{amsplain} \bibliography{MASA_FOn}

\end{document}